
\documentclass[11pt, a4paper, twoside,reqno]{amsart}

\usepackage[utf8]{inputenc}
\usepackage[british]{babel}
\usepackage[T1]{fontenc}
\usepackage{amsmath,amsfonts,amsthm,latexsym,amssymb}
\usepackage{microtype}
\usepackage{fancyhdr} 
\usepackage{lmodern}
\usepackage{xcolor}
\usepackage[a4paper]{geometry}

\usepackage{tikz-cd}
\usepackage{nccmath}
\usepackage{enumerate}
\usepackage{empheq}
\usepackage[bookmarks=true,breaklinks=true,bookmarksnumbered = true,colorlinks=false,hidelinks]{hyperref}
\usepackage{theoremref}
\usepackage{bm}

\usepackage{faktor}
\allowdisplaybreaks


\usepackage{graphicx}
\usepackage{pdflscape}
\usepackage{epstopdf}


\theoremstyle{plain}
\newtheorem{thm}{Theorem}[section]
\newtheorem{thmA}{Theorem}

\newtheorem{prop}[thm]{Proposition}
\newtheorem{cor}[thm]{Corollary}
\newtheorem{lem}[thm]{Lemma}

\theoremstyle{remark}
\newtheorem{rem}[thm]{Remark}
\newtheorem{notaz}[thm]{Notation}

\theoremstyle{definition}
\newtheorem{defin}[thm]{Definition}
\newtheorem{exam}[thm]{Example}


\newcommand\Z{\mathbb{Z}}

\newcommand\CC{\mathcal{C}}

\DeclareMathOperator{\Id}{Id}
\DeclareMathOperator{\Ker}{Ker}



\newcommand\od{\overline{d}}



\newcommand{\myrightleftarrows}[1]{\mathrel{\substack{\xrightarrow{#1} \\[-.9ex] \xleftarrow{#1}}}}


\setlength{\textwidth}{17.1cm}
\setlength{\textheight}{24cm}
\setlength{\oddsidemargin}{0cm}
\setlength{\evensidemargin}{0cm}
\setlength{\topmargin}{-1cm}

\makeatletter

\renewcommand{\p@enumii}{}

\def\@enum@{\list{\csname label\@enumctr\endcsname}%
	{\usecounter{\@enumctr}\def\makelabel##1{
			\normalfont\ignorespaces\emph{{##1}~}}
		\setlength{\labelsep}{3pt}
		\setlength{\parsep}{0pt}
		\setlength{\itemsep}{0pt}
		\setlength{\leftmargin}{0pt}
		\setlength{\labelwidth}{0pt}
		\setlength{\listparindent}{\parindent}
		\setlength{\itemsep}{0pt}
		\setlength{\itemindent}{0pt}
		\topsep=3pt plus 1pt minus 1 pt}}
\makeatother

\makeatletter
\@namedef{subjclassname@2020}{%
	\textup{2020} Mathematics Subject Classification}
\makeatother

\begin{document}

	\title[Cactus and twin groups]{Cactus groups, twin groups, and right-angled Artin groups}
	
	\author[Paolo Bellingeri]{Paolo Bellingeri}
	\address{Normandie Univ, UNICAEN, CNRS, LMNO, 14000 Caen, France}
	\email{paolo.bellingeri@unicaen.fr}
	
	\author[Hugo Chemin]{Hugo Chemin}
	\address{Normandie Univ, UNICAEN, CNRS, LMNO, 14000 Caen, France}
	\email{hugo.chemin@unicaen.fr}
	
	\author[Victoria Lebed]{Victoria Lebed}
	\address{Normandie Univ, UNICAEN, CNRS, LMNO, 14000 Caen, France}
	\email{victoria.lebed@unicaen.fr}
	
	\date{\today}
	
	
	\subjclass[2020]{
		20F55, 
		20F36, 
		57K12,  
		20F10  
	}
	
	\keywords{Braid groups, twin groups, cactus groups, right-angled Coxeter groups, pure cactus groups, virtual braid groups, torsion, word problem, normal form, group $1$-cocycle.}
	
	\begin{abstract}
		Cactus groups $J_n$ are currently attracting considerable interest from diverse mathematical communities. This work explores their relations to right-angled Coxeter groups, and in particular twin groups $Tw_n$ and Mostovoy's Gauss diagram groups $D_n$, which are better understood. 
		Concretely, we construct an injective group $1$-cocycle from $J_n$ to $D_n$, and show that $Tw_n$ (and its $k$-leaf generalisations) inject into $J_n$. As a corollary, we solve the word problem for cactus groups, determine their torsion (which is only even) and center (which is trivial), and answer the same questions for pure cactus groups, $PJ_n$. In addition, we yield a $1$-relator presentation of the first non-abelian pure cactus group $PJ_4$. Our tools come mainly from combinatorial group theory.
	\end{abstract}

	\maketitle
	
	\section{Introduction}\label{intro}
	
	Cactus groups appeared under the name of \emph{quasibraid groups} in the study of the \emph{mosaic operad}; this latter governs the moduli space of configurations of smooth points on punctured stable real algebraic curves of genus zero \cite{Devadoss_MosaicOp,Etingof_StableCurves,KhoWill_Revisited}. They were immediately generalised to other Coxeter types, and renamed \emph{mock reflection groups} \cite{Davis_BlowUps}.
	
	It was later realised that the same groups control \emph{coboundary categories}, just as braid groups control braided categories \cite{Henriques_CoboundCat}. That paper launched the term \emph{cactus groups}, inspired by the Opuntia-cactus-like form of the moduli spaces above. Coboundary categories were designed to study the crystals of finite-dimensional reductive Lie algebras and, more generally, the representations of coboundary Hopf algebras. 
	
	Cactus groups also appear in the context of hives and octahedron recurrence \cite{KTW_OctaRecurrence,Henriques_Octahedron}. Together with their generalisations to other Coxeter types, they have become a recurrent tool in representation theory \cite{Bonnafe_CellsCacti,Losev_CactiCells,Chmutov_BK_Cactus}.

	Concretely, the \emph{cactus group} $J_n$ is defined by its generators\footnote{We should have written $s_{p,q;n}$ here. However, we systematically drop the subscript $n$ since it is always clear from the context. The same is done for the maps $s$ and $d$, and for the Gauss diagrams $\tau_I$ below.}  $s_{p,q}$, where $1\le p < q \le n$, and relations
	\begin{align}
	&s_{p,q}^2=1, \label{E:j1}\tag{j1}\\
	&s_{p,q} s_{m,r}=s_{m,r}s_{p,q} \; \text{ if }\ [p,q] \cap [m,r] = \emptyset, \label{E:j2}\tag{j2}\\
	&s_{p,q} s_{m,r}=s_{p+q-r,p+q-m}s_{p,q} \; \text{ if }\ [m,r]  \subset [p,q]. \label{E:j3}\tag{j3}
	\end{align}
	
	The generator $s_{p,q}$ can be diagrammatically represented as the braid on $n$ strands where the strands $p$, $p+1$, $\ldots$, $q$ intersect at one common point, and reverse their order after that point. The relations are depicted in  Fig.~\ref{Pic:cactus_relations}. Here and below the diagrams are drawn from left to right, in order to match the order of generators in a word representing a cactus.
	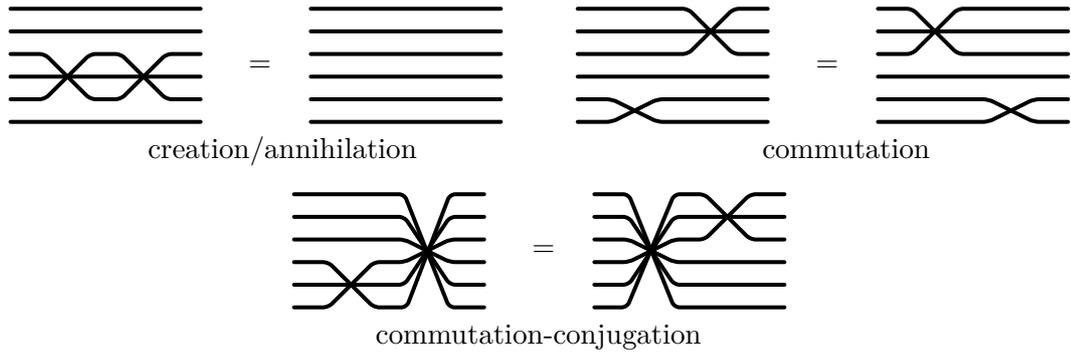
\begin{figure}[h]
		\begin{center}
		\begin{tikzpicture}[line cap=round,line join=round,x=.5cm,y=.3cm,rounded corners=2pt,line width=1.5pt]
\draw (0.5,0.)-- (5.5,0.);
\draw (0.5,4.)-- (5.5,4.);
\draw (0.5,5.)-- (5.5,5.);
\draw (0.5,3.)-- (1.4,3.)-- (2.6,1.)-- (3.4,1.)-- (4.6,3.)-- (5.5,3.);
\draw (0.5,1.)-- (1.4,1.)-- (2.6,3.)-- (3.4,3.)-- (4.6,1.)-- (5.5,1.);
\draw (0.5,2.)-- (5.5,2.);
\node at (7.3,2.5) {$=$ \hspace*{3pt}};		
\end{tikzpicture}
\begin{tikzpicture}[line cap=round,line join=round,x=.5cm,y=.3cm,rounded corners=2pt,line width=1.5pt]
\draw (0.5,0.)-- (5.5,0.);
\draw (0.5,4.)-- (5.5,4.);
\draw (0.5,5.)-- (5.5,5.);
\draw (0.5,3.)-- (5.5,3.);
\draw (0.5,1.)-- (5.5,1.);
\draw (0.5,2.)-- (5.5,2.);		
\end{tikzpicture}	
\hspace*{20pt}
\begin{tikzpicture}[line cap=round,line join=round,x=.5cm,y=.3cm,rounded corners=2pt,line width=1.5pt]
\draw   (0.5,1.)-- (1.4,1.)-- (2.6,0.)-- (5.5,0.);
\draw   (0.5,0.)-- (1.4,0.)-- (2.6,1.)-- (5.5,1.);
\draw   (0.5,2.)-- (5.5,2.);
\draw   (0.5,3.)-- (3.4,3.)-- (4.6,5.)-- (5.5,5.);
\draw   (0.5,5.)-- (3.4,5.)-- (4.6,3.)-- (5.5,3.);
\draw   (0.5,4.)-- (3.4,4.)-- (4.6,4.)-- (5.5,4.);
\node at (7.3,2.5) {$=$ \hspace*{3pt}};	
\end{tikzpicture}	
\begin{tikzpicture}[line cap=round,line join=round,x=.5cm,y=.3cm,rounded corners=2pt,line width=1.5pt]
\draw   (0.5,1.)-- (3.4,1.)-- (4.6,0.)-- (5.5,0.);
\draw   (0.5,0.)-- (3.4,0.)-- (4.6,1.)-- (5.5,1.);
\draw   (0.5,2.)-- (5.5,2.);
\draw   (0.5,3.)-- (1.4,3.)-- (2.6,5.)-- (5.5,5.);
\draw   (0.5,5.)-- (1.4,5.)-- (2.6,3.)-- (5.5,3.);
\draw   (0.5,4.)-- (1.4,4.)-- (2.6,4.)-- (5.5,4.);
\end{tikzpicture} 
creation/annihilation \hspace*{0.25\textwidth} commutation

\bigskip
\begin{tikzpicture}[line cap=round,line join=round,x=.5cm,y=.3cm,rounded corners=2pt,line width=1.5pt]
\draw   (0.5,0.)-- (1.4,0.)-- (2.6,2.)-- (3.4,2.)-- (4.6,3.)-- (5.5,3.);
\draw   (0.5,2.)-- (1.4,2.)-- (2.6,0.)-- (3.46,0.)-- (4.6,5.)-- (5.5,5.);
\draw   (0.5,1.)-- (1.4,1.)-- (2.6,1.)-- (3.4,1.)-- (4.6,4.)-- (5.5,4.);
\draw   (0.5,3.)-- (3.4,3.)-- (4.6,2.)-- (5.5,2.);
\draw   (0.5,4.)-- (3.4,4.)-- (4.6,1.)-- (5.5,1.);
\draw   (0.5,5.)-- (3.4,5.)-- (4.58,0.)-- (5.5,0.);
\node at (7.3,2.5) {$=$ \hspace*{3pt}};	
\end{tikzpicture}
\begin{tikzpicture}[line cap=round,line join=round,x=.5cm,y=.3cm,rounded corners=2pt,line width=1.5pt]
\draw   (0.5,5.)-- (1.4,5.)-- (2.6,0.)-- (5.5,0.);
\draw   (0.5,4.)-- (1.4,4.)-- (2.6,1.)-- (5.5,1.);
\draw   (0.5,3.)-- (1.4,3.)-- (2.6,2.)-- (5.5,2.);
\draw   (0.5,2.)-- (1.4,2.)-- (2.6,3.)-- (3.4,3.)-- (4.6,5.)-- (5.5,5.);
\draw   (0.5,1.)-- (1.4,1.)-- (2.6,4.)-- (3.4,4.)-- (4.6,4.)-- (5.5,4.);
\draw   (0.5,0.)-- (1.36,0.)-- (2.6,5.)-- (3.4,5.)-- (4.6,3.)-- (5.5,3.);
\end{tikzpicture}

commutation-conjugation
			\caption{Three types of relations in cactus groups}\label{Pic:cactus_relations}
		\end{center}
	\end{figure}
	These diagrams make one think of cacti once again---saguaros these time. For this reason we will often call \emph{cacti} the elements of~$J_n$, and use the term \emph{leaf number} for the parameter $q-p+1$ of the generator $s_{p,q}$.
	
	One should handle such diagrams with care: the braid relation $s_{1,2}s_{2,3}s_{1,2}=s_{2,3}s_{1,2}s_{2,3}$ from Fig.~\ref{Pic:braid}, natural in braid and knot theories (where it corresponds to the Reidemeister III move), does not hold in cactus groups.
	\begin{figure}[h]
		\begin{center}
    \begin{tikzpicture}[line cap=round,line join=round,x=.5cm,y=.3cm,rounded corners=2pt,line width=1.5pt]
\draw   (0.,0.)-- (3.36,0.)-- (4.6,1.)-- (5.4,1.)-- (6.6,2.)-- (8.,2.);
\draw   (0.,1.)-- (1.4,1.)-- (2.6,2.)-- (5.4,2.)-- (6.6,1.)-- (8.,1.);
\draw   (0.,2.)-- (1.4,2.)-- (2.6,1.)-- (3.4,1.)-- (4.64,0.)-- (8.,0.);
\node at (9.5,1) {$\neq$ \hspace*{3pt}};
\end{tikzpicture}			
	\begin{tikzpicture}[line cap=round,line join=round,x=.5cm,y=.3cm,rounded corners=2pt,line width=1.5pt]
\draw   (0.,2.)-- (3.4,2.)-- (4.6,1.)-- (5.4,1.)-- (6.56,0.)-- (8.,0.);
\draw   (0.,1.)-- (1.4,1.)-- (2.6,0.)-- (5.4,0.)-- (6.6,1.)-- (8.,1.);
\draw   (0.,0.)-- (1.4,0.)-- (2.6,1.)-- (3.4,1.)-- (4.6,2.)-- (8.,2.);
\node at (9.5,1) {$\neq$ \hspace*{3pt}};	
\end{tikzpicture}
    \begin{tikzpicture}[line cap=round,line join=round,x=.5cm,y=.3cm,rounded corners=2pt,line width=1.5pt]
\draw   (0.,2.)-- (1.4,2.)-- (2.6,0.)-- (4.,0.);
\draw   (0.,1.)-- (1.4,1.)-- (2.6,1.)-- (4.,1.);
\draw   (0.,0.)-- (1.32,0.)-- (2.6,2.)-- (4.,2.);
\end{tikzpicture}
			\caption{The braid relation, false in cactus groups}\label{Pic:braid}
		\end{center}
	\end{figure}
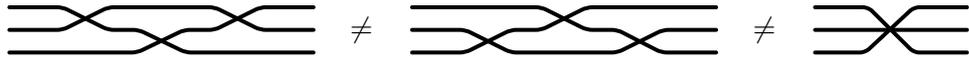
	
	The closure of such braids yields \emph{cactus doodles}, i.e. curves with self-intersections \cite{Mostovoy_CactusDoodles}.
	
	Looking at how such braids permute their strands, one obtains a group morphism
	\begin{align*}
	s \colon J_n &\to S_n,\\
	s_{p,q} &\mapsto (1,2,\ldots,p-1,\ \bm{q,q-1,\ldots,p+1,p},\ q+1,q+2,\ldots,n).
	\end{align*}
	The kernel of this map is the \emph{pure cactus group} $PJ_n$, sometimes denoted as $\Gamma_{n+1}$. It is the fundamental group of the real locus of the Deligne--Mumford
	compactiﬁcation $\overline{\mathcal{M}}_{0,n+1}$ of the moduli space of rational curves with $n+1$ marked points \cite{Devadoss_MosaicOp}. This explains why these groups are particularly interesting.
	
	Our braid-like diagrams can be read in another way: label the strands from $1$ (top) to $n$ (bottom), and then at each multiple point write down the set of the labels of the intersecting strands. This yields a set-theoretic map
	\[d \colon J_n \to D_n.\]
	Here $D_n$ is the \emph{Gauss diagram group} from \cite{Mostovoy_CactResNilpot}\footnote{In \cite{Mostovoy_CactResNilpot}, the $D_n$ were simply called \emph{diagram groups}. Following a suggestion of Mostovoy, we use a more precise term, in order to avoid confusion with Guba and Sapir's diagram groups.}. Concretely, it has one generator $\tau_I$ for each subset $I$ of $\{1,2,\ldots,n\}$ of size $\geq 2$, and the relations are
	\begin{align}
	&\tau_I^2=1, \label{E:d1}\tag{d1}\\
	&\tau_I \tau_J = \tau_J \tau_I \; \text{ if }\ I \cap J = \emptyset \ \text{ or }\ I \subset J. \label{E:d2}\tag{d2}
	\end{align}
	It is a \emph{right-angled Coxeter group} (\emph{RACG}), that is, it is generated by idempotents with only commutation relations between them. Its elements will be called \emph{Gauss diagrams}, since they are related to the Gauss diagrams from virtual knot theory.
	
	The map $d$ is not a group morphism. In Section~\ref{S:WordPb}, we explain that it is a \emph{group $1$-cocycle}, injective by a theorem from \cite{Mostovoy_CactResNilpot} (see also \cite{Yu_LinGenCactus} for a proof for other Coxeter types). However, its restriction to the pure part $PJ_n$ becomes a group morphism. 
	
	The ``reading'' maps $s$ and $d$ can be assembled into a single injective group morphism 
	\[\rho = d \times s \colon J_n \to D_n \rtimes S_n.\]
	The semi-direct product on the right can be seen as the \emph{virtual cactus group}, where any, not necessarily neighboring, collection of strands can come together (using the $S_n$ part) and form a multi-strand intersection (using the $D_n$ part). Note that, contrary to the usual approach to virtuality in similar settings  \cite{Bardakov_TwinStructureVirtual,Nanda_VirtTw}, in $D_n \rtimes S_n$ a diagram $\tau_{I} \in D_n$ and a permutation $\sigma \in S_n$ do commute when $\sigma$ permutes elements from~$I$ only.
	
	In Section~\ref{S:WordPb}, we use the map~$d$ to reduce the \emph{word problem} in $J_n$ to its much easier analogue in the RACG $D_n$, and describe an efficient solution.

	
	Similarly, in Section~\ref{S:Subgroups} we work on the $D_n$ side to study certain subgroups of~$J_n$. Concretely, given some $2 \leq i \leq j \leq n$, consider the group $J_n^{i,j}$ defined by the generators $s_{p,q}$, where $1\le p < q \le n$ and $i \leq q-p+1 \leq j$, and the cactus relations \eqref{E:j1}-\eqref{E:j3}. In other words, we keep only those generators whose leaf number is between $i$ and $j$. The groups $Tw_n=J_n^{2,2}$ appeared as \emph{Grothendieck cartographical groups} in \cite{Voevodsky_cartographical}; further as \emph{twin groups} in \cite{Khovanov_DoodleGroups}, as a diagrammatic description of the motion of $n$ points on the plane without triple collisions, but also as a tool to study \emph{doodles} (closed plane curves without triple intersections, see \cite{Fenn_doodles}); later under the name of \emph{flat braids} \cite{Merkov_flat} and \emph{planar braids} \cite{Mostovoy_PureTwin6}; and finally under the name of \emph{traids} in physics literature \cite{traid_groups}. By definition, the twin groups are RACGs, just like any group $J_n^{i,i}$. As other RACG families, they appear in several contexts such as topological robotics \cite{GLM_robotics}. Our first results is
	\begin{thmA}
		For all $2 \leq i \leq j \leq n$, the natural maps
		\begin{align*}
		&J_n^{i,j} \to J_n,\\
		&s_{p,q} \mapsto s_{p,q}
		\end{align*}
		are injective.
	\end{thmA}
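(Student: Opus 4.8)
The plan is to leverage the injective morphism $\rho=d\times s\colon J_n\to D_n\rtimes S_n$ recalled in the introduction. Since $\rho$ is injective, the natural map $\iota\colon J_n^{i,j}\to J_n$ is injective as soon as $\rho\circ\iota$ is. The first observation is a \emph{leaf-number preservation} principle: the $S_n$-action on $D_n$ permutes the subsets $I$ and hence preserves $|I|$, while each defining relation \eqref{E:j1}--\eqref{E:j3} preserves the multiset of leaf numbers $q-p+1$ of the generators involved (in \eqref{E:j3} the pair $\{q-p+1,\ r-m+1\}$ is merely transposed). Consequently, applying the cocycle formula to any word in the generators $s_{p,q}$ with $i\le q-p+1\le j$ produces a product of generators $\tau_J$ with $i\le|J|\le j$. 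Writing $D_n^{i,j}\le D_n$ for the special subgroup generated by these $\tau_J$, the restriction of $d$ yields a well-defined $1$-cocycle $d'\colon J_n^{i,j}\to D_n^{i,j}$, fitting in a commutative square with $\iota$ and the inclusion $D_n^{i,j}\hookrightarrow D_n$. Chasing this square, $\iota(x)=1$ forces $d'(x)=\varnothing$, so \emph{it suffices to prove that $d'$ is injective}.

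For the lower bound $i$ I would argue by an explicit retraction, which already settles the case $j=n$. Define $r\colon J_n\to J_n^{i,n}$ on generators by $s_{p,q}\mapsto s_{p,q}$ if $q-p+1\ge i$ and $s_{p,q}\mapsto 1$ otherwise. One checks that $r$ respects \eqref{E:j1}--\eqref{E:j3}: the only delicate case is \eqref{E:j3} with $[m,r]\subset[p,q]$, where the two transposed generators share the leaf number $r-m+1$ and are therefore simultaneously sent to $1$ or simultaneously kept, so the relation holds on either side. Thus $r$ is a homomorphism with $r\circ\iota=\mathrm{id}$ on $J_n^{i,n}$, which already proves injectivity when $j=n$ and isolates the upper truncation as the essential difficulty.

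The upper truncation is the genuine content, and here the retraction strategy breaks down: a forbidden generator $s_{p,q}$ with $q-p+1>j$ still conjugates an allowed $s_{m,r}$ to the \emph{distinct} allowed generator $s_{p+q-r,p+q-m}$ via \eqref{E:j3}, and no element of $J_n^{i,j}$ realises this interval reversal. Instead I would prove $d'$ injective through normal forms. The group $D_n^{i,j}$, being a special subgroup of the RACG $D_n$, is itself a RACG that injects into $D_n$ (standard Coxeter theory), and its word problem is solved by the usual procedure: a word is trivial iff it reduces to the empty word by deleting factors $\tau_I\tau_I$ and commuting adjacent $\tau_I\tau_J$ with $I\cap J=\varnothing$ or $I\subset J$. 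The task is to \emph{lift} a reduction of $d'(w)$ back to the cactus side: I would track the ordered cocycle product $d'(w)=\prod_t\tau_{\sigma_{t-1}([p_t,q_t])}$ and translate each elementary RACG move into a cactus move among allowed generators---a deletion $\tau_I\tau_I\to\varnothing$ into \eqref{E:j1}, and a commutation of disjoint (resp. nested) diagrams into \eqref{E:j2} (resp. \eqref{E:j3}). Leaf-number preservation guarantees that every generator produced along the way again has leaf number in $[i,j]$, so the whole reduction takes place inside $J_n^{i,j}$; hence a word in allowed generators that is trivial in $J_n$ is already trivial in $J_n^{i,j}$, which is exactly the injectivity of $d'$.

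I expect the main obstacle to be this last lifting step, and specifically the matching of the \emph{nested} commutations of $D_n^{i,j}$ with applications of \eqref{E:j3}. The difficulty is that a single transposition of adjacent diagrams $\tau_I,\tau_J$ in the RACG normal form need not correspond to a single cactus relation: one must verify that the prefix permutations $\sigma_t$ arrange the neighbouring interval generators so that each required swap is an instance of \eqref{E:j2} or \eqref{E:j3} with \emph{both} participating generators allowed, and control the bookkeeping of these permutations across the reduction. Making this correspondence precise---essentially importing the word-problem solution of Section~\ref{S:WordPb} and checking it is compatible with the leaf-number window $[i,j]$---is where the real work lies.
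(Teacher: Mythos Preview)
Your approach is correct and is essentially the paper's proof. The paper proves the more general Theorem~\ref{T:subgroup} (for any symmetric collection of sub-intervals) by the same mechanism: a word in the restricted generators that is trivial in $J_n$ can, by Proposition~\ref{P:cactus_normal}, be reduced to the empty word using only annihilations and (conjugation-)commutations, and leaf-number preservation guarantees every intermediate word stays within the allowed generators, so the same reduction is valid in $J_n^{i,j}$.

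Two remarks. First, your separate retraction argument for the lower bound $i$ is redundant for the theorem (the lifting argument handles both bounds simultaneously), though the paper does record exactly this retraction later as the map $\epsilon_i$ in Proposition~\ref{P:CactusSlice}. Second, your stated ``main obstacle'' is not an obstacle: a single commutation $\tau_I\tau_J\leadsto\tau_J\tau_I$ in $\od(w)$ \emph{does} lift to a single cactus relation. Since both $I$ and $J$ are images under the same prefix bijection $\ell$ of the position-intervals $[p_t,q_t]$ and $s(s_{p_t,q_t})([p_{t+1},q_{t+1}])$, and $[p_t,q_t]$ is setwise fixed by $s(s_{p_t,q_t})$, disjointness or nesting of $I,J$ is equivalent to disjointness or nesting of the original intervals $[p_t,q_t]$ and $[p_{t+1},q_{t+1}]$---exactly the hypotheses of \eqref{E:j2} or \eqref{E:j3}. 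This is precisely Lemma~\ref{L:d_lift}\eqref{it:pullback}, so no extra bookkeeping is needed.
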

	Thus cactus groups contain twin groups and their higher-leaf analogues. This result is actually established for a wider class of partial presentations of $J_n$. It can also be seen as the braid-like counterpart of the recent proof that the space of doodles embeds into that of cactus doodles \cite{Mostovoy_CactusDoodles}.
	
	We further exploit the injectiveness of~$d$ in Section~\ref{S:Torsion} to study the \emph{torsion} and the \emph{center} of $J_n$ and $PJ_n$. We prove
	\begin{thmA}
		The cactus groups $J_n$ have no odd torsion. Moreover, for any $k$ they contain torsion of order $2^k$ provided that $n$ is big enough.
	\end{thmA}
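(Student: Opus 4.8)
The plan is to prove the two assertions separately, in each case pushing the question into the right-angled Coxeter group $D_n$ through the reading maps $d$ and $s$.

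\emph{No odd torsion.} It is enough to exclude elements of odd prime order $p$. I first dispose of the pure part: since $d$ restricts to an injective group morphism $PJ_n\to D_n$ and a RACG has no element of odd order (its finite subgroups are conjugate into finite special subgroups, which are elementary abelian $2$-groups), the group $PJ_n$ contains no element of odd order. Now suppose $g\in J_n$ has order $p$. If $g\in PJ_n$ we are done. Otherwise $\sigma:=s(g)\neq 1$; as $\langle g\rangle\cap PJ_n$ is a subgroup of $\langle g\rangle\cong\Z/p$ whose nontrivial elements would be odd-order elements of $PJ_n$, it is trivial, so $s$ is injective on $\langle g\rangle$ and $\sigma$ has order exactly $p$. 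Substituting $g^p=1$ into the cocycle identity for $d$ gives
\[
1\;=\;d(g^{p})\;=\;d(g)\bigl(\sigma\cdot d(g)\bigr)\bigl(\sigma^{2}\cdot d(g)\bigr)\cdots\bigl(\sigma^{p-1}\cdot d(g)\bigr),\qquad \sigma\cdot\tau_{I}=\tau_{\sigma(I)} .
\]
Everything thus reduces to showing that this relation is impossible for odd $p$ unless $g=1$.

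\emph{The main obstacle} is exactly this last step. It cannot be settled inside $D_n\rtimes S_n$ alone, because that group \emph{does} have odd torsion: the complement $S_n\hookrightarrow D_n\rtimes S_n$ already contains it. Hence one must use that $d(g)$ is the reading of an honest cactus, a product of $\tau_I$ with each $I$ an interval. The guiding case is $J_3\cong\Z/2\ast\Z/2$: there the lift $s_{1,2}s_{2,3}$ of the $3$-cycle has infinite order, precisely because its cyclically $\sigma$-shifted readings accumulate into a reduced, hence nontrivial, word in $D_3\cong\Z/2\ast\Z/2\ast\Z/2$. I would prove the general case with the normal form for $D_n$ built in Section~\ref{S:WordPb}: the aim is to show that for odd $p$ the displayed product reduces to a nonempty word and so cannot be the identity. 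The delicate point is to control the cancellations among the $p$ shifted copies of $d(g)$; the interval structure of the factors, together with the $\sigma$-invariance (as sets) of the outermost intervals, should obstruct the total collapse that an \emph{even} number of copies would permit. (Alternatively one can argue geometrically, letting finite subgroups act on the contractible universal cover of the aspherical space $\overline{\mathcal M}_{0,n+1}(\R)$ and showing the cell stabilisers are $2$-groups.)

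\emph{Torsion of order $2^k$.} Here I would simply exhibit it, by the recursive family of nested reversals
\[
g_{k}\;=\;s_{1,2^{k}}\,s_{1,2^{k-1}}\cdots s_{1,2}\ \in\ J_{2^{k}},
\]
so that $n=2^{k}$ — and hence any larger $n$ — is ``big enough''. A direct computation shows that $s(g_{k})$ is a single $2^{k}$-cycle, so $g_k$ has order at least $2^{k}$. On the other hand the intervals $[1,2^{j}]$ form a chain under inclusion, so the generators $\tau_{[1,2^{j}]}$ pairwise commute in $D_n$; carrying this through the cocycle formula, the product $d(g_{k}^{\,2^{k}})=d(g_k)\bigl(s(g_k)\cdot d(g_k)\bigr)\cdots$ collapses to the identity. (For $k=2$ one has $d(g_{2})=\tau_{\{1,2,3,4\}}\tau_{\{3,4\}}$, and the four copies of $\tau_{\{1,2,3,4\}}$ cancel while the surviving $\tau_{\{3,4\}}$ and $\tau_{\{1,2\}}$ cancel by commutation; the general case follows by induction on $k$.) Since also $s(g_{k}^{\,2^{k}})=1$, injectivity of $\rho=d\times s$ forces $g_{k}^{\,2^{k}}=1$, so $g_k$ has order exactly $2^{k}$. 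The only real work is the bookkeeping in $D_n$ confirming this collapse and the cycle type of $s(g_k)$, both routine given the nested-interval pattern.
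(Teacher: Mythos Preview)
Your two halves are of very different quality.

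\textbf{No odd torsion.} You correctly dispose of the pure case via the injective morphism $d|_{PJ_n}$, and you correctly reduce the non-pure case to the cocycle identity
\[
1=d(g)\,(\sigma\!\cdot d(g))\cdots(\sigma^{p-1}\!\cdot d(g)).
\]
But then you stop. Saying that the interval structure ``should obstruct the total collapse'' is not a proof; as you yourself note, the ambient group $D_n\rtimes S_n$ does have odd torsion, so one really must use something specific, and you never say what. This is the genuine gap.

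The paper closes it with a minimal-length argument carried out in $FJ_n$ via Lemmas~\ref{L:d_lift}--\ref{L:RACG_normal}. Take a $p$-torsion element with the shortest irreducible representative $w$. Some annihilable pair in $w^p$ must exist; if the pair sits at different positions of different copies, cyclic conjugation eventually pulls it into a single copy, contradicting minimality. If the pair sits at the \emph{same} position in two copies, then two letters among $\tau,\ {}^{t}\!\tau,\ldots,\ {}^{t^{p-1}}\!\tau$ coincide, hence (since $t$ has order $p$ or $1$) all $p$ coincide. But $\od(w^p)$ represents $1$, so the letter $\tau$ must occur an even number of times there; $p$ being odd forces a further copy of $\tau$ inside a single $\sigma$-shifted $\od(w)$, again contradicting minimality. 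The parity step is the missing idea in your sketch.

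\textbf{Torsion of order $2^k$.} Your element $g_k=s_{1,2^k}\cdots s_{1,2}$ is the inverse of the paper's $t_k=s_{1,2}\cdots s_{1,2^k}$, so the choice is equivalent. Your lower bound via $s(g_k)$ being a $2^k$-cycle is correct and pleasant; the paper does not use this. Two comments on your upper bound:
\begin{itemize}
\item A small slip: $d(g_k)$ is not $\tau_{[1,2^k]}\tau_{[1,2^{k-1}]}\cdots\tau_{[1,2]}$; after the first reversal the labels shift to the top, so e.g.\ $d(g_3)=\tau_{[1,8]}\tau_{\{5,6,7,8\}}\tau_{\{5,6\}}$. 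The letters are still a nested chain, so your commutation claim survives, but be precise.
\item ``Collapses to the identity by induction'' hides exactly the point that needs checking: that all $\sigma^i$-translates of these nested sets remain pairwise nested-or-disjoint. This is true, but it is the content, not a routine afterthought. The paper avoids this bookkeeping by computing directly in $J_n$: $t_{k+1}^2=t_k\,t_k'$ with $t_k'=s_{1,2^{k+1}}t_ks_{1,2^{k+1}}$ living on strands $2^k{+}1,\ldots,2^{k+1}$, hence commuting with $t_k$; both factors have order $2^k$ by induction, so $t_{k+1}$ has order $2^{k+1}$. Your $D_n$ computation can be made to work, but this one-line induction is what you should aim for.
\end{itemize}
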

	\begin{thmA}
		The pure cactus groups $PJ_n$ are torsionless.
	\end{thmA}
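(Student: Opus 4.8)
The plan is to exploit the injective cocycle $d$ to realise $PJ_n$ as a subgroup of a well-understood Coxeter group, reduce everything to the absence of involutions, and then kill all torsion by a dimension argument. The first move is the reduction. As explained earlier in the paper, the restriction $d|_{PJ_n}\colon PJ_n \to D_n$ is a genuine group morphism, and since $d$ is injective, this identifies $PJ_n$ with a subgroup of $D_n$. Now $D_n$ is a RACG, and in any Coxeter group every element of finite order is conjugate into a finite (spherical) special subgroup; for a RACG such a subgroup is generated by pairwise commuting involutions, hence is an elementary abelian $2$-group $(\Z/2)^k$. Consequently every torsion element of $D_n$, and a fortiori every torsion element of $PJ_n$, has order exactly $2$. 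This is consistent with the preceding statement that $J_n$ has no odd torsion, and it shows that the present theorem is equivalent to the assertion that $PJ_n$ contains no involution.

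To exclude involutions I would invoke the geometric description recalled in the Introduction: $PJ_n$ is the fundamental group of the real locus $\overline{\mathcal M}_{0,n+1}(\R)$ of the Deligne--Mumford compactification. This space is a smooth closed manifold of dimension $n-2$, and it is aspherical, so it is a finite-dimensional model for the classifying space of $PJ_n$. Hence $PJ_n$ has finite cohomological dimension, $\operatorname{cd}(PJ_n)\le n-2$. A group of finite cohomological dimension is torsion-free, because a nontrivial finite cyclic subgroup $\Z/p$ would satisfy $\operatorname{cd}(\Z/p)\le\operatorname{cd}(PJ_n)<\infty$, contradicting $\operatorname{cd}(\Z/p)=\infty$. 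This proves the theorem outright, and in particular no involution survives in $PJ_n$, as required by the reduction above.

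The reason I would route the final step through cohomological dimension rather than through the paper's combinatorics is that a \emph{direct} exclusion of involutions on the $D_n$ side is exactly where the difficulty lies. Detecting such elements via the abelianisation $\operatorname{ab}\colon D_n \to (\Z/2)^{\{\tau_I\}}$ is doomed, since an even product of commuting conjugates of a single generator already lies in $\ker(\operatorname{ab})$; and even the coarsest invariant, total length modulo $2$, is useless here, because $d$ sends a length-$\ell$ cactus word to a length-$\ell$ word in the $\tau_I$, so the two parity morphisms satisfy $\epsilon_D\circ d=\epsilon_J$, while $\epsilon_J$ is itself nontrivial on $PJ_n$ (for $n\ge 4$ there exist pure cacti of odd length). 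A purely combinatorial argument would therefore have to control the fine structure of the subgroup $d(PJ_n)\le D_n$, equivalently to show that $PJ_n$ acts freely on the Davis complex of $D_n$; I expect this to be the main obstacle, which is precisely what the cohomological-dimension argument sidesteps.
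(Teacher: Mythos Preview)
Your argument is correct, but it takes a genuinely different route from the paper's, and in fact it is precisely the topological route the authors acknowledge in the Introduction and then deliberately set aside in favour of a self-contained combinatorial proof.

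Your reduction step is clean and slightly different from the paper's: you observe that $d|_{PJ_n}$ is a genuine group homomorphism, hence $PJ_n$ embeds in the RACG $D_n$, so any torsion is $2$-torsion. The paper instead reduces to $2$-torsion by invoking its earlier theorem that $J_n$ has no odd torsion. Both are valid; yours is arguably more direct for the pure case. After that, however, you appeal to the asphericity of $\overline{\mathcal M}_{0,n+1}(\R)$ and finite cohomological dimension to kill all torsion at once. This is sound (and makes your initial reduction redundant), but it imports a nontrivial external fact.

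Where your assessment goes astray is the last paragraph: you predict that a purely combinatorial exclusion of involutions in $d(PJ_n)$ is ``the main obstacle'' and opt to sidestep it. The paper meets this obstacle head-on and dispatches it in a few lines. Concretely, a minimal-length $2$-torsion element $c$ has $\overline d(w)$ equal (after the analysis already done for odd torsion) to a product of pairwise commuting generators $\tau_I$. One may then reorder $w$ so that leaf numbers are non-increasing, and trace the trajectory of the leftmost index $p$ under the successive permutations $s(s_{p_i,q_i})$: the commutation constraints force a strictly contracting ``ping-pong'' pattern, so $s(c)$ moves $p$ strictly to the right and is therefore nontrivial. Hence $c\notin PJ_n$. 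This is exactly the kind of fine control of $d(PJ_n)\le D_n$ you anticipated would be hard; it turns out to be short and elementary. The trade-off is that the paper's proof is fully internal to the combinatorics of cactus words, while yours is shorter once one accepts the asphericity input.
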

	\begin{thmA}
		The groups $J_n$ and $PJ_n$ are centerless whenever $n>2$ and $n>3$ respectively.
	\end{thmA}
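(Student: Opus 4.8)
The plan is to separate the two assertions and to transport both into the right-angled Coxeter group $D_n$. For $J_n$ I first reduce to pure cacti: since $s\colon J_n\to S_n$ is surjective and $Z(S_n)=\{1\}$ for $n>2$, any $z\in Z(J_n)$ satisfies $s(z)\in Z(S_n)=\{1\}$, so $z\in PJ_n$; as such a $z$ also commutes with the whole subgroup $PJ_n$, we even obtain $Z(J_n)\subseteq Z(PJ_n)$ for every $n>2$. Thus the pure statement is formally the stronger one for $n\ge 4$, while $J_3$ (where $PJ_3\cong\Z$ is its own centre) will need a separate, direct argument.

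To treat $J_n$ uniformly I would use the embedding $\rho=d\times s$. For $z\in PJ_n$ we have $\rho(z)=(d(z),1)$, and $z\in Z(J_n)$ is equivalent to $(d(z),1)$ commuting in $D_n\rtimes S_n$ with every $\rho(s_{p,q})=(\tau_{[p,q]},\sigma_{p,q})$, where $\sigma_{p,q}=s(s_{p,q})$ reverses the block $[p,q]$. Writing $v=d(z)$ and recalling that each $\sigma\in S_n$ acts on $D_n$ by the length-preserving automorphism $\tau_I\mapsto\tau_{\sigma(I)}$, this is the system
\[
{}^{\sigma_{p,q}}v=\tau_{[p,q]}\,v\,\tau_{[p,q]}\qquad(1\le p<q\le n),
\]
which I will call $(\ast)$. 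In $D_3$ it is small enough to solve by hand: $\tau_{\{1,2,3\}}$ is central, so $(\ast)$ for $[1,3]$ forces $v$ into the $\sigma_{1,3}$-fixed set, and feeding this into the relations for $[1,2]$ and $[2,3]$ — using that the three pair-generators span a free product $\Z/2*\Z/2*\Z/2$ — leaves only $v\in\{1,\tau_{\{1,2,3\}}\}$. Since $d(PJ_3)\cong PJ_3$ is torsion-free by Theorem~C it has no involution, so $v=1$ and $Z(J_3)=1$.

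For general $n$ the goal is to show that the only solutions of $(\ast)$ are $v\in\{1,\tau_{\{1,\dots,n\}}\}=Z(D_n)$; torsion-freeness of $d(PJ_n)$ then excludes the involution $\tau_{\{1,\dots,n\}}$ and yields $Z(J_n)=1$ for all $n>2$. A clean first step is a support argument: because $\mathrm{supp}(svs)\subseteq\mathrm{supp}(v)\cup\{s\}$ in any Coxeter group and the $\sigma$-action preserves supports, the relations $(\ast)$ for adjacent transpositions force $\mathrm{supp}(v)$ to be $S_n$-invariant, i.e.\ a union of size classes $\{\tau_I:|I|\in K\}$, and the relations for the sizes outside $K$ impose further incidence constraints on $K$. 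The decisive, harder step is to upgrade this almost-invariance to full invariance, namely to prove that every solution of $(\ast)$ is actually fixed by all the twists $\sigma_{p,q}$ (equivalently, commutes with every interval generator $\tau_{[p,q]}$), whence $v$ is $S_n$-invariant and therefore central in $D_n$.

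The main obstacle is exactly this upgrade: the relations $(\ast)$ only involve interval generators, whereas a general $v$ also carries non-interval letters, so twist-triviality cannot be read off formally. I expect to establish it either by a reduced-word analysis (tracking descent sets through the Exchange Condition) or, more geometrically, via the action of $D_n$ on its CAT(0) cube complex, where $(\ast)$ says that all $S_n$-translates of $v$ are conjugate and one can compare their axes and fixed-point sets. For the pure groups with $n\ge 4$, the embedding $d$ reduces $Z(PJ_n)=1$ to the centerlessness of the subgroup $d(PJ_n)\le D_n$, since a central $z$ gives $d(z)\in C_{D_n}(d(PJ_n))\cap d(PJ_n)=Z(d(PJ_n))$. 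I would settle the base case $n=4$ from the one-relator presentation of $PJ_4$ obtained later in the paper together with Pietrowski's description of centres of one-relator groups, and then push the conclusion up by exhibiting explicit pure elements whose $d$-images (computed through the word-problem normal form of Section~\ref{S:WordPb}) have trivial joint centralizer in $D_n$. Verifying this last centralizer triviality is where I expect the genuine difficulty to lie.
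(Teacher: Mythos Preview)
Your reduction $Z(J_n)\subseteq PJ_n$ via the surjection onto $S_n$ is correct, and the translation of centrality into the system $(\ast)$ in $D_n$ is sound. However, the argument stops precisely at the crucial point: you correctly identify that the main content is showing that the only solutions of $(\ast)$ lying in $d(PJ_n)$ are trivial (equivalently, that all solutions of $(\ast)$ lie in $\{1,\tau_{\{1,\dots,n\}}\}$), but you do not prove this --- the suggestions (Exchange Condition, CAT(0) geometry) remain programmatic. The same applies to your plan for $PJ_n$: reducing to a centralizer computation inside $D_n$, handling $PJ_4$ via its one-relator presentation, and then ``pushing up'' are valid strategies, but none of the steps is carried out, and you yourself flag the centralizer triviality as the hard part. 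As written, both halves contain genuine gaps.

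The paper avoids solving $(\ast)$ altogether by working one level up, in $J_n$ rather than in $D_n$. It takes a \emph{minimal-length} representative $w$ of a hypothetical central $c\in J_n$ and, for each generator $s_{p,q}$, analyses the relation $ws_{p,q}=s_{p,q}w$ using the word-problem machinery (Proposition~\ref{P:cactus_normal} together with Lemmas~\ref{L:d_lift} and~\ref{L:RACG_normal}). A short case analysis --- is $ws_{p,q}$ irreducible? if not, can two distinct letters of $w$ be (conjugation-)commuted to opposite ends? --- shows that either minimality of $w$ is violated, or every letter of $\overline d(w)$ commutes with ${}^{s(c)}\!\tau_{[p,q]}$ in $D_n$. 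Running this over all $s_{p,q}$ gives $d(c)\in Z(D_n)=\{1,\tau_{\{1,\dots,n\}}\}$ directly. The option $\tau_{\{1,\dots,n\}}$ is then excluded by checking that its $d$-preimage $s_{1,n}$ fails to commute with $s_{1,2}$; no appeal to the torsion-freeness of $PJ_n$ is needed (so the argument is independent of Theorem~C, which your version invokes). For $PJ_n$ the paper again avoids induction and one-relator theory: since every $s_{p,q}$ with $q-p+1>2$ becomes pure after multiplying by suitable $2$-leaf generators, the same minimal-word analysis against these pure elements forces every letter of $\overline d(w)$ to commute with every $\tau_I$ of size $>2$; for $n>3$ only $\tau_{\{1,\dots,n\}}$ has this ``almost friendly'' property, and one finishes by exhibiting a single explicit pure cactus not commuting with $s_{1,n}$.
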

	
	We have seen that the cactus group $J_n$ contains several important RACGs ($Tw_n$ and more generally all the $J_n^{i,i}$). In the opposite direction, it injects (non-homomorphically) into the RACG $D_n$. In some sense, it can be thought of as a deformation of a RACG, where some commutation relations are deformed to commutation-conjugation relations. In fact, it can be seen as the Coxeter-like finite quotient, in the sense of \cite{LV_StrGroups}, of the structure group of a partial solution to the Yang--Baxter equation, in the sense of \cite{Chouraqui_Partial}. It inherits some properties of the RACG~$D_n$, but loses others: for instance, it has less center (the center of~$D_n$ is $\langle\tau_{1,2,\ldots,n}\rangle \simeq \Z_2$) and more torsion ($D_n$ has torsion of order $2$ but not of order $4$).  
	
	Finally, in Appendix~\ref{A:PJ4} we yield a (complicated) one-relator presentation for the first interesting pure cactus group~$PJ_4$. In particular, it confirms the absence of torsion in this group.
	
	Some of our results can be recovered using methods from topological algebra or geometric group theory. Thus, one can explain the absence of torsion in $PJ_n$ by interpreting it as the fundamental group of an aspherical manifold \cite{Etingof_StableCurves}. 
	Another approach, pointed to us by Anthony Genevois, exploits the median property of the natural Cayley graph of $J_n$. This property implies that cactus groups are  $\operatorname{CAT}(0)$ groups,
	and therefore (see for intance \cite{BriHae}) have solvable word and conjugacy problems. 
	Our proofs, of combinatorial nature, have the advantage of being elementary, explicit, and self-contained.
	
	\medskip
	\textbf{Acknowledgements.} The authors are grateful to Neha Nanda and John Guaschi for their help with GAP computations and fruitful conversations, and to Jacob Mostovoy and Anthony Genevois for helpful discussions and remarks. P.B. was partially supported by the ANR project AlMaRe (ANR-19-CE40-0001).

	\section{Word problem for cactus groups}\label{S:WordPb}
	
	Recall that, given a cactus diagram $t$ representing a cactus $c \in J_n$, the Gauss diagram $d(c) \in D_n$ is constructed as follows: label the left endpoints of the strands of $t$ from $1$ (top) to $n$ (bottom); at each crossing reverse the order of the strands, and hence of the labels; at the $i$th crossing write down the ()unordered) set $I_i$ of the labels of the intersecting strands; finally, multiply the generators of $D_n$ corresponding to these label sets from left to right, setting $d(c)=\tau_{I_1}\tau_{I_2}\cdots$. See Fig.~\ref{Pic:cactus_example} for an example\footnote{Here and below we write $\tau_{a,b,\ldots}$ instead of $\tau_{ \{a,b,\ldots\} }$ for simplicity.}.
	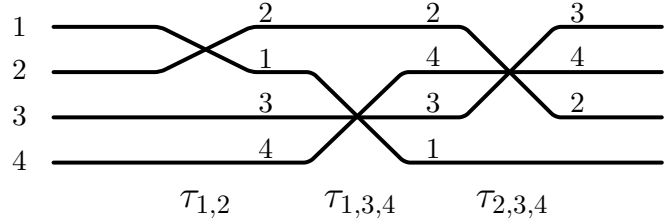
\begin{figure}[h]
		\begin{center}
					\begin{tikzpicture}[line cap=round,line join=round,x=1cm,y=.6cm,rounded corners=2pt,line width=1.5pt]
\draw (0.,3.)-- (1.4,3.)-- (2.6,2.)-- (3.4,2.)-- (4.62,0.)-- (8.,0.);
\draw (0.,2.)-- (1.4,2.)-- (2.6,3.)-- (5.4,3.)-- (6.6,1.)-- (8.,1.);
\draw (0.,1.)-- (5.4,1.)-- (6.6,3.)-- (8.,3.);
\draw (0.,0.)-- (3.36,0.)-- (4.6,2.)-- (8.,2.);
\draw (1.5,-0.36) node[anchor=north west] {\Large$\tau_{1,2}$};
\draw (3.4,-0.36) node[anchor=north west] {\Large$\tau_{1,3,4}$};
\draw (5.4,-0.36) node[anchor=north west] {\Large$\tau_{2,3,4}$};
\draw (-0.7,3.46) node[anchor=north west] {$1$};
\draw (2.54,2.76) node[anchor=north west] {$1$};
\draw (4.74,0.76) node[anchor=north west] {$1$};
\draw (-0.7,2.5) node[anchor=north west] {$2$};
\draw (2.54,3.76) node[anchor=north west] {$2$};
\draw (6.64,1.78) node[anchor=north west] {$2$};
\draw (-0.7,1.5) node[anchor=north west] {$3$};
\draw (4.74,1.78) node[anchor=north west] {$3$};
\draw (6.64,3.76) node[anchor=north west] {$3$};
\draw (-0.7,0.5) node[anchor=north west] {$4$};
\draw (4.74,2.8) node[anchor=north west] {$4$};
\draw (6.64,2.8) node[anchor=north west] {$4$};
\draw (2.54,0.76) node[anchor=north west] {$4$};
\draw (2.54,1.78) node[anchor=north west] {$3$};
\draw (4.74,3.76) node[anchor=north west] {$2$};
			\end{tikzpicture}
			\caption{A cactus $c$ with $s(c)=(4312)$ and $d(c)=\tau_{1,2}\tau_{1,3,4}\tau_{2,3,4}$}\label{Pic:cactus_example}
		\end{center}
	\end{figure}
	
	\begin{thm}[\cite{Mostovoy_CactResNilpot,Yu_LinGenCactus}]
		The above procedure yields a well-defined injective map 
		\[d \colon J_n  \hookrightarrow D_n.\]
	\end{thm}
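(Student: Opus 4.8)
The plan is to treat well-definedness and injectivity separately, handling the latter by first transferring it to the right-angled Coxeter group $D_n$.

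\emph{Well-definedness.} First I would view $d$ as the first coordinate of $\rho=d\times s$ and verify that $\rho$ respects the cactus relations, so that both maps descend to $J_n$. Write $w_{p,q}=s(s_{p,q})\in S_n$ for the reversal of $[p,q]$, let $S_n$ act on $D_n$ by $\sigma\cdot\tau_I=\tau_{\sigma(I)}$, and note $\rho(s_{p,q})=(\tau_{[p,q]},w_{p,q})$. The three relations then reduce to elementary facts about reversals: $w_{p,q}$ fixes the set $[p,q]$, which yields \eqref{E:j1} from \eqref{E:d1}; if $[p,q]\cap[m,r]=\emptyset$ then $w_{p,q}$ fixes $[m,r]$ and the two reversals commute, which yields \eqref{E:j2} from \eqref{E:d2}; and if $[m,r]\subset[p,q]$ then $w_{p,q}([m,r])=[p+q-r,p+q-m]$, while $w_{p+q-r,p+q-m}$ fixes $[p,q]$ and $w_{p,q}w_{m,r}w_{p,q}=w_{p+q-r,p+q-m}$ in $S_n$, which yields \eqref{E:j3} from \eqref{E:d2}. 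Hence $\rho\colon J_n\to D_n\rtimes S_n$ is a homomorphism, $d$ is well-defined, and it satisfies the cocycle identity $d(c_1c_2)=d(c_1)\,\bigl(s(c_1)\cdot d(c_2)\bigr)$.

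\emph{Transferring injectivity.} Next I would show that $s$ is recoverable from $d$, which collapses injectivity of $d$ to injectivity of $\rho$. For $a<b$, define $\mu_{a,b}\colon D_n\to\Z_2$ by counting, modulo $2$, the letters $\tau_I$ of a word with $\{a,b\}\subseteq I$; this is a homomorphism, since \eqref{E:d1} contributes $0$ and \eqref{E:d2} leaves the multiset of letters unchanged. When a cactus diagram reverses a contiguous block of strands, two strands exchange relative height exactly when both lie in the block, i.e. when both labels lie in the recorded set; so $\mu_{a,b}(d(c))$ is the parity of the number of times $a$ and $b$ swap, which is precisely whether $\{a,b\}$ is an inversion of $s(c)$. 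As a permutation is determined by its inversion set, $s(c)$ is a function of $d(c)$; thus $d(c)=d(c')$ forces $s(c)=s(c')$, hence $\rho(c)=\rho(c')$. In particular $\ker d=\ker\rho\subseteq PJ_n$, and since $s$ vanishes on $PJ_n$ the cocycle $d$ restricts there to an honest homomorphism, so it remains to prove $d|_{PJ_n}$ injective.

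\emph{The core and its main obstacle.} For this last step I would construct an intrinsic normal form for $J_n$ directly from \eqref{E:j1}--\eqref{E:j3}, orienting the relations into a rewriting system: \eqref{E:j1} deletes squares, \eqref{E:j2} reorders generators on disjoint intervals, and \eqref{E:j3} moves a generator past any other whose interval strictly contains it, sorting by nesting. One must then show this system terminates for a suitable complexity measure and is confluent, all critical pairs from overlapping \eqref{E:j2}/\eqref{E:j3} patterns resolving, so that normal forms are unique; and finally that $d$ separates them, the Gauss diagram read off a normal cactus word being already reduced in the RACG $D_n$ (no \eqref{E:d1} cancellation, and no \eqref{E:d2} move exposing one), hence equal to its RACG normal form and so reconstructing the cactus. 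I expect this confluence-plus-no-collapse analysis to be the genuine difficulty; it is exactly the technical core carried out in \cite{Mostovoy_CactResNilpot,Yu_LinGenCactus}, which I would ultimately invoke.
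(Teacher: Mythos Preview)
Your well-definedness argument is fine and matches the paper's Lemma~\ref{L:d_lift}(a). Your inversion-counting step recovering $s$ from $d$ is correct and pleasant, though the paper does not need it.

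Where you diverge is in the injectivity argument, and here you miss the paper's key simplification. You propose to build a confluent rewriting system \emph{on $J_n$} and then check that $d$ separates its normal forms; you flag confluence as the hard part and defer it to the references. But the paper (following Mostovoy) never does this. Instead it proves a short \emph{pullback lemma} (Lemma~\ref{L:d_lift}\eqref{it:pullback}): whenever an annihilation $\tau_I^2\leadsto 1$ or a commutation \eqref{E:d2} is applied to a word of the form $\od(w)$, the result is again of the form $\od(w')$ for some $w'$ obtained from $w$ by a single cactus relation \eqref{E:j1}, \eqref{E:j2}, or \eqref{E:j3}. This works because adjacent letters $\tau_I,\tau_J$ in $\od(w)$ come from adjacent cactus generators whose intervals are related (via the running permutation) exactly so that $I,J$ are disjoint, nested, or equal; the corresponding cactus move is then forced. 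The point is that only \emph{creation} $1\leadsto\tau_I^2$ can leave $\operatorname{Im}\od$.

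Injectivity then falls out of the standard RACG fact (Lemma~\ref{L:RACG_normal}) that two words representing the same element of $D_n$ can be joined using only commutations and annihilations, never creations. Every step of such a $D_n$-rewriting lifts through $\od$ to a cactus relation, and since $\od$ is injective on free words this forces the original cactus words to be equal in $J_n$. No confluence analysis on $J_n$ is required: the confluence is borrowed wholesale from $D_n$, where it is classical.

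So your outline is not wrong, but it works harder than necessary and mischaracterises what the cited papers actually do. The missing idea is Lemma~\ref{L:d_lift}\eqref{it:pullback}; once you have it, the ``core'' you anticipate being difficult becomes a two-line consequence of RACG normal forms.
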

	
	This is a consequence of the following two lemmas, which we will also need below.
	
	\begin{notaz}
		Let $FJ_n$ (resp., $FD_n$) be the free group on the generators $s_{p,q}$ (resp., $\tau_I$).	
	\end{notaz}
	
	The above procedure defines a map 
	\[\od \colon FJ_n \hookrightarrow FD_n,\] 
	which is clearly injective, but not surjective for $n> 2$ (for instance, $\tau_{1,3}$ is not in its image). A careful comparison of the relations defining $J_n$ and $D_n$ yields
	
	\begin{lem}\label{L:d_lift}
		\begin{enumerate}
			\item If a word $w' \in FJ_n$ is obtained from $w \in FJ_n$ by applying a relation of type \eqref{E:j1} (resp., \eqref{E:j2} or \eqref{E:j3}), then $\od(w') \in FD_n$ is obtained from $\od(w) \in FD_n$ by applying a relation of type \eqref{E:d1} (resp., \eqref{E:d2}).
			\item\label{it:pullback} Conversely, if a word $v' \in FD_n$ is obtained from some $\od(w) \in FD_n$ by applying an {annihilation relation} $\tau_I^2 \leadsto 1$ (resp., a commutation relation \eqref{E:d2}), then $v'=d(w')$ for the word  $w' \in FJ_n$ obtained from $w$ by applying a corresponding relation of type \eqref{E:j1} (resp., \eqref{E:j2} or \eqref{E:j3}).
		\end{enumerate}
	\end{lem}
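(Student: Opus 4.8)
The plan is to reformulate $\od$ through a running ``label permutation'' and then reduce everything to a purely local computation at the two letters touched by the relation. Write $w=g_1\cdots g_m$ with $g_k=s_{p_k,q_k}$, and for an interval $[a,b]\subseteq\{1,\ldots,n\}$ let $\rho_{[a,b]}\in S_n$ be the permutation reversing $[a,b]$ and fixing the rest. Set $\ell_0=\Id$ and $\ell_k=\ell_{k-1}\circ\rho_{[p_k,q_k]}$; this $\ell_k$ is the position-to-label assignment after the first $k$ crossings, and since $\rho_{[p_k,q_k]}=s(g_k)$ we get $\ell_k=s(g_1)\cdots s(g_k)$, so the running permutation is exactly the $s$-image of the prefix. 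By construction $\od(w)=\tau_{I_1}\cdots\tau_{I_m}$ with $I_k=\ell_{k-1}([p_k,q_k])$; note also $I_k=\ell_k([p_k,q_k])$, because $\rho_{[p_k,q_k]}$ maps $[p_k,q_k]$ onto itself. The structural fact I will use repeatedly is that every $\ell_k$ is a \emph{bijection}.

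\emph{Part (1).} A relation applied to $w$ at letters $k,k+1$ leaves the prefix $g_1\cdots g_{k-1}$, hence $\ell_{k-1}$, untouched; and because every cactus relation maps under the morphism $s$ to a valid identity in $S_n$, the permutation reached after the modified pair is unchanged, so the entire suffix contributes identically to $\od$. Thus it suffices to compare the two affected letters. For \eqref{E:j1} the second letter reverses an interval the first has just reversed, so the label set is unchanged and we read $\tau_I\tau_I$, which the relation deletes, a \eqref{E:d1} move. For \eqref{E:j2} disjoint position-intervals yield $I=\ell_{k-1}([p,q])$ and $J=\ell_{k-1}([m,r])$ with $I\cap J=\emptyset$ (injectivity of $\ell_{k-1}$), so swapping them is the disjoint case of \eqref{E:d2}. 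For \eqref{E:j3}, applying $\rho_{[p,q]}$ sends the inner interval $[m,r]$ to $[p+q-r,p+q-m]\subset[p,q]$, whence $J=\ell_{k-1}([p+q-r,p+q-m])\subset\ell_{k-1}([p,q])=I$, and the relation exchanges $\tau_I\tau_J\leftrightarrow\tau_J\tau_I$, the nested case of \eqref{E:d2}. This settles part (1).

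\emph{Part (2).} I would deduce the converse from part (1) together with a rigidity statement. Fix adjacent letters $\tau_{I_k}\tau_{I_{k+1}}$ of $\od(w)$, coming from $g_k=s_{p_k,q_k}$ and $g_{k+1}=s_{p_{k+1},q_{k+1}}$. For an annihilation move $I_k=I_{k+1}$; since $I_k=\ell_k([p_k,q_k])$ and $I_{k+1}=\ell_k([p_{k+1},q_{k+1}])$ are images of the two position-intervals under the \emph{single} bijection $\ell_k$, injectivity forces $[p_k,q_k]=[p_{k+1},q_{k+1}]$, i.e. $g_k=g_{k+1}$, and the \eqref{E:j1} deletion gives the desired $w'$. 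For a commutation move the point is to show that the relation between $I_k$ and $I_{k+1}$ pins down the relative position of the two intervals. Since $\ell_{k-1}$ is a bijection, $I_k$ and $I_{k+1}=\ell_{k-1}\bigl(\rho_{[p_k,q_k]}([p_{k+1},q_{k+1}])\bigr)$ stand in the same relation (disjoint, nested, or neither) as $[p_k,q_k]$ and $\rho_{[p_k,q_k]}([p_{k+1},q_{k+1}])$. A short interval computation then yields the trichotomy: disjoint intervals give disjoint label sets (\eqref{E:j2}); nested intervals give nested label sets in either containment direction, the case $[p_{k+1},q_{k+1}]\subset[p_k,q_k]$ being \eqref{E:j3} read left-to-right and the case $[p_k,q_k]\subset[p_{k+1},q_{k+1}]$ being \eqref{E:j3} read right-to-left, i.e. $s_{p,q}s_{m,r}=s_{m,r}s_{m+r-q,m+r-p}$; and partially overlapping intervals give label sets that are neither disjoint nor nested. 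Granting this, any \eqref{E:d2} move available on $\od(w)$ sits over a genuine \eqref{E:j2} or \eqref{E:j3} configuration; applying that cactus relation to $w$ produces $w'$, and part (1) guarantees that $\od(w')$ is precisely $\od(w)$ with the corresponding $D_n$-move applied, namely $v'$.

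\emph{Main obstacle.} The only real work is the interval computation behind the trichotomy, and in particular the last clause: I must check that when $[p_k,q_k]$ and $[p_{k+1},q_{k+1}]$ overlap without nesting, the image $\rho_{[p_k,q_k]}([p_{k+1},q_{k+1}])$ both meets $[p_k,q_k]$ and escapes it, so that $I_k$ and $I_{k+1}$ are genuinely non-commuting in $D_n$. This is exactly what prevents a $D_n$-relation from appearing where no cactus relation exists, and it is the delicate case-by-case point of the argument; the remaining cases and the ``suffix unchanged'' bookkeeping are routine given the identity $\ell_k=s(g_1)\cdots s(g_k)$.
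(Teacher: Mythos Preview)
Your proof is correct and is exactly the ``careful comparison of the relations defining $J_n$ and $D_n$'' that the paper invokes without writing out; the paper gives no detailed argument for this lemma, so your proposal is not so much an alternative as a full elaboration of the intended proof. The key formalism you introduce---tracking the running label permutation $\ell_k=s(g_1)\cdots s(g_k)$ and using its bijectivity to transport interval relations---is precisely what makes the local computation go through, and your trichotomy (disjoint/nested/partially overlapping position-intervals map to disjoint/nested/neither label sets) is the heart of part~(2). The only case you flag as delicate, the partial-overlap one, is indeed the crux: your sketch that $\rho_{[p_k,q_k]}([p_{k+1},q_{k+1}])$ both meets and escapes $[p_k,q_k]$ is right and can be made explicit by noting that for $p_k<p_{k+1}\le q_k<q_{k+1}$ this image is $[p_k,\,p_k+q_k-p_{k+1}]\cup[q_k+1,\,q_{k+1}]$, which contains $p_k$ and $q_{k+1}$ but omits $p_k+q_k-p_{k+1}+1\in[p_k,q_k]$.
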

	In other words, both commutation and commutation-conjugation relations for cacti are translated by commutation relations for Gauss diagrams. 
	
	Note that the statement~\eqref{it:pullback} is false for {creation relations} $1 \leadsto \tau_I^2$, since these latter can lead outside of the image of~$\od$.
	
	The following result is standard in the theory of RACGs:
	\begin{lem}\label{L:RACG_normal}
		Let $G$ be a RACG, and $w$ a word in the standard generators (called \emph{letters}) representing an element $g \in G$. Consider the following procedure: as long as $w$ contains two copies of the same letter $l$ separated by letters commuting with $l$, move one copy towards the other by commutation, then annihilate them by applying $l^2 \leadsto 1$; repeat. The result of this procedure is independent, up to commutation in~$G$, of the choice of the annihilated couples and of the choice of the word $w$ representing $g$.
	\end{lem}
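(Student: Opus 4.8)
The plan is to recognise the described procedure as a terminating rewriting system and to prove its confluence, so that its output is a well-defined normal form up to commutation. I would work with words in the letters of $G$ modulo the commutation relations alone, and call such a class a \emph{trace}. On traces, introduce a single reduction rule, \emph{annihilation}, which deletes a pair of equal letters $l \cdots l$ whenever the two copies can be made adjacent by commutations (equivalently, whenever all the letters separating them commute with $l$). Each annihilation strictly decreases word length, and length is invariant under commutation, so no infinite chain of reductions exists: the system is terminating, and the procedure of the lemma is precisely ``apply annihilation until none applies''. A trace to which no annihilation applies is exactly a word with no two copies of a letter separated by commuting letters; I will call such a trace \emph{reduced}.

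By Newman's lemma, for a terminating system confluence follows from local confluence, so the first task is to inspect the critical pairs of the annihilation rule modulo commutation. If a trace admits two annihilatable pairs that are letter-disjoint, the two reductions commute and join after performing the other, which is immediate. The only genuine overlap occurs when three copies of the same letter $l$ are present, with the first and second, and the second and third, each annihilatable; here one reduction keeps the third copy and the other keeps the first. I then need to show that the two resulting traces coincide, that is, that the surviving single copy of $l$ lies in the same commutation class in both cases. Because the three copies are pairwise linked by runs of letters commuting with $l$, all intervening letters commute with $l$, so each surviving copy can be slid to one canonical position, making the two outcomes equal as traces. Local confluence, and hence confluence, follows; each trace therefore has a unique reduced normal form, independent of the order in which annihilations are carried out. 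This settles independence from the choice of annihilated couples.

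It remains to remove the dependence on the word $w$ chosen to represent $g$. Two such words are related by a finite sequence of the defining moves of the RACG $G$: commutations \eqref{E:d2} and creations or annihilations \eqref{E:d1}. Commutations leave the trace, hence the normal form, unchanged. An annihilation is a reduction step and a creation is its inverse, so by the Church--Rosser property furnished by confluence neither alters the reduced normal form. Thus the normal form depends only on the element $g \in G$ and not on the representing word, which is the remaining assertion of the lemma.

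The step I expect to be the main obstacle is the overlapping ``triple'' critical pair: one must argue carefully that when three equal letters are mutually accessible by commutation, the letter left over after annihilating either adjacent pair lands in the same commutation class. Every other case is routine, and the passage from confluence to a normal form invariant under the full congruence of $G$ is the standard Church--Rosser argument. Alternatively, one may simply invoke the classical description of geodesics in a right-angled Coxeter group, namely that any two reduced expressions of an element are related by commutations; this is the specialisation to $m_{ij} \in \{2,\infty\}$ of Tits' solution to the word problem for Coxeter groups.
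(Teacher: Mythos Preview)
Your argument is correct. The paper does not prove this lemma at all: it is stated as ``standard in the theory of RACGs'' and simply used. So there is no paper proof to compare against; you have supplied one.

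Your rewriting approach (termination by length, local confluence via critical pairs, Newman's lemma, then Church--Rosser to absorb creations) is one of the standard ways to establish this fact. Two small points worth tightening. First, in the ``letter-disjoint'' case you should spell out why the second pair remains annihilatable after the first is removed: the letters between the surviving pair form a subset of the original intervening letters, hence still all commute with the relevant generator. This is the content behind ``immediate''. Second, your enumeration of overlaps is slightly incomplete as written: two annihilatable pairs sharing one occurrence need not be the adjacent pairs $(1,2)$ and $(2,3)$ in a triple; they could be $(1,3)$ and $(2,3)$, say. But since annihilatability of $(1,3)$ forces every letter between $l_1$ and $l_3$ (including $l_2$) to commute with $l$, all three pairs are then annihilatable and your sliding argument goes through unchanged. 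With these clarifications the local-confluence step is complete.

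Your closing remark is also on point: the shortcut is simply the right-angled case of Tits' theorem that any two reduced expressions of a Coxeter group element differ by braid moves, which here are just commutations.
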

	
	In particular, one can transform any two words representing the same element of a RACG into the same word without ever applying the creation relation.
	
	These two lemmas immediately imply that $\od$ induces an injective map $d \colon J_n \to D_n$.
	
	In fact, the second lemma yields more. Choose any order on the set of generators of a RACG $G$, and extend it lexicographically to the words in these generators. Since by Lemma~\ref{L:RACG_normal} all minimal-length representatives of an element $g$ of a RACG $G$ are equivalent up to commutation, the minimal word among such representatives yields an easily computable \emph{normal form} on~$G$. According to Lemma~\ref{L:d_lift}, this normal form can be pulled back from $D_n$ to $J_n$. This gives a solution to the \emph{word problem} in $J_n$, which we summarise as follows:
	
	\begin{prop}\label{P:cactus_normal}
		Let $w \in FJ_n$ be a word representing a cactus $c \in J_n$. Consider the following procedure: if $w$ contains two letters $l$ and $l'$ such that $l'$ can be (conjugation-)commuted all the way to~$l$ (according to the rules \eqref{E:j2}-\eqref{E:j3}) and in the process becomes $l$, then do this (conjugation-)commutation and annihilate $ll$ (according to the rule \eqref{E:j1});  repeat as long as possible. The result is the empty word if and only if the cactus $c$ is trivial.
	\end{prop}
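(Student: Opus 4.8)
The plan is to transport the entire reduction procedure from $J_n$ to the right-angled Coxeter group $D_n$ along the length-preserving word map $\od$, where the analogous ``annihilate as long as possible'' procedure is already controlled by Lemma~\ref{L:RACG_normal}. The ``only if'' direction is immediate: every elementary step of the procedure applies a defining relation of $J_n$, namely a commutation~\eqref{E:j2}, a commutation-conjugation~\eqref{E:j3}, or an annihilation~\eqref{E:j1}, so it does not change the element of $J_n$ represented by the current word. Thus if the procedure ends on the empty word, the cactus $c$ was trivial to begin with.

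For the converse, I would first observe that the procedure always terminates: each annihilation shortens the word by two letters, while the intervening (conjugation-)commutations preserve its length, so only finitely many annihilations are possible. Write $w_{\mathrm{fin}}$ for the word at which the procedure halts, that is, a word admitting no annihilatable pair in the sense of the statement.

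The crux is to identify this run with a run of the RACG reduction of Lemma~\ref{L:RACG_normal} on $\od(w) \in FD_n$. By the first part of Lemma~\ref{L:d_lift}, each (conjugation-)commutation in $J_n$ is carried by $\od$ to a commutation~\eqref{E:d2}, and each annihilation to a relation $\tau_I^2 \leadsto 1$; the essential point is that the label tracking built into $\od$ absorbs the conjugation of~\eqref{E:j3}, so that a letter $l'$ which, after being commuted across, ``becomes $l$'' in $J_n$ corresponds to two literally equal generators $\tau_I$ in $D_n$. Conversely, the assertion~\eqref{it:pullback} of Lemma~\ref{L:d_lift} ensures that any RACG commutation or annihilation applicable to an image word lifts to the matching $J_n$-move and stays inside the image of $\od$. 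Hence the $J_n$-procedure on $w$ maps under $\od$ to a genuine run of the RACG reduction on $\od(w)$ that never invokes a creation relation, and $\od(w_{\mathrm{fin}})$ is a fully reduced word in $D_n$ representing $d(c)$.

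It remains to apply Lemma~\ref{L:RACG_normal}. If $c$ is trivial then $d(c)=1$, whose reduced representative is unique up to commutation and therefore empty; since $\od(w_{\mathrm{fin}})$ is such a reduced representative and $\od$ preserves length, $\od(w_{\mathrm{fin}})$, and hence $w_{\mathrm{fin}}$ itself, is the empty word. The main obstacle is exactly the matching established in the previous paragraph: one must check that ``annihilatable pair'' means the same thing on both sides, which reduces to verifying that the conjugation in~\eqref{E:j3} becomes invisible after the relabelling performed by $\od$. Everything else is bookkeeping built on the two lemmas.
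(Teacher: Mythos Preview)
Your proof is correct and follows exactly the route the paper takes: transport the reduction along $\od$ using Lemma~\ref{L:d_lift}(a), use Lemma~\ref{L:d_lift}\eqref{it:pullback} to conclude that the terminal word $w_{\mathrm{fin}}$ has fully reduced image $\od(w_{\mathrm{fin}})$ in $D_n$, and then invoke Lemma~\ref{L:RACG_normal} together with the length-preservation of $\od$. The paper's own argument is the single sentence ``this normal form can be pulled back from $D_n$ to $J_n$'' preceding the proposition; you have simply unpacked it.
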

	
	This procedure has a nice diagrammatic interpretation if one works with $i$-leaf cacti only (that is, with elements from $J_n^{i,i}$). It then consists in \emph{bigon killing}, as illustrated in Fig.~\ref{Pic:bigon}.
	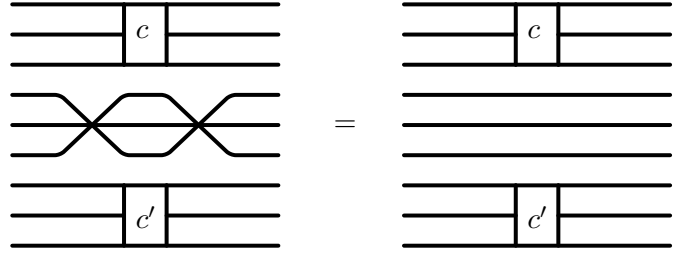
\begin{figure}[h]
		\begin{center}
		\begin{tikzpicture}[line cap=round,line join=round,x=.7cm,y=.4cm,rounded corners=2pt,line width=1.5pt]
\draw   (0.5,2.)-- (1.4,2.)-- (2.6,4.)-- (3.4,4.)-- (4.6,2.)-- (5.5,2.);
\draw   (0.5,3.)-- (5.5,3.);
\draw   (0.5,4.)-- (1.4,4.)-- (2.6,2.)-- (3.4,2.)-- (4.6,4.)-- (5.5,4.);
\draw   (2.6,7.)-- (3.4,7.);
\draw   (3.4,7.)-- (3.4,5.);
\draw   (3.4,5.)-- (2.6,5.);
\draw   (2.6,5.)-- (2.6,7.);
\draw   (2.6,1.)-- (3.4,1.);
\draw   (3.4,1.)-- (3.4,-1);
\draw   (2.6,-1)-- (2.6,1.);
\draw   (0.5,1.)-- (2.6,1.);
\draw   (0.5,0.)-- (2.6,0.);
\draw   (3.4,1.)-- (5.5,1.);
\draw   (3.4,0.)-- (5.5,0.);
\draw   (0.5,-1)-- (5.5,-1);
\draw   (0.5,5.)-- (2.6,5.);
\draw   (3.4,5.)-- (5.5,5.);
\draw   (0.5,6.)-- (2.6,6.);
\draw   (3.4,6.)-- (5.5,6.);
\draw   (0.5,7.)-- (2.6,7.);
\draw   (3.4,7.)-- (5.5,7.);
\draw (2.6,6.7) node[anchor=north west] {$c$};
\draw (2.6,0.7) node[anchor=north west] {$c'$};
\node at (7,3) {$=$ \hspace*{6pt}};	
\end{tikzpicture}
		\begin{tikzpicture}[line cap=round,line join=round,x=.7cm,y=.4cm,rounded corners=2pt,line width=1.5pt]
\draw   (0.5,2.)-- (5.5,2.);
\draw   (0.5,3.)-- (5.5,3.);
\draw   (0.5,4.)-- (5.5,4.);
\draw   (2.6,7.)-- (3.4,7.);
\draw   (3.4,7.)-- (3.4,5.);
\draw   (3.4,5.)-- (2.6,5.);
\draw   (2.6,5.)-- (2.6,7.);
\draw   (2.6,1.)-- (3.4,1.);
\draw   (3.4,1.)-- (3.4,-1);
\draw   (2.6,-1)-- (2.6,1.);
\draw   (0.5,1.)-- (2.6,1.);
\draw   (0.5,0.)-- (2.6,0.);
\draw   (3.4,1.)-- (5.5,1.);
\draw   (3.4,0.)-- (5.5,0.);
\draw   (0.5,-1)-- (5.5,-1);
\draw   (0.5,5.)-- (2.6,5.);
\draw   (3.4,5.)-- (5.5,5.);
\draw   (0.5,6.)-- (2.6,6.);
\draw   (3.4,6.)-- (5.5,6.);
\draw   (0.5,7.)-- (2.6,7.);
\draw   (3.4,7.)-- (5.5,7.);
\draw (2.6,6.7) node[anchor=north west] {$c$};
\draw (2.6,0.7) node[anchor=north west] {$c'$};
\end{tikzpicture}
			\caption{Bigon killing; here $c$ is any cactus from $J_a^{3,3}$, and $c'$ is any cactus from $J_{n-a-3}^{3,3}$}\label{Pic:bigon}
		\end{center}
	\end{figure}
	
	\begin{defin}
		A word $w \in FJ_n$ is called \emph{irreducible} if it contains no two letters that can be (conjugation-)commuted together and annihilated.
	\end{defin}
	
	Pulling back form~$D_n$ to~$J_n$ the results of Lemma~\ref{L:RACG_normal}, one sees that all irreducible representatives of a cactus $c\in J_n$ are related by (conjugation-)commutation. In particular, they have the same length, which is minimal for representatives of $c$.
		
\begin{rem}
The conjugacy problem in~$J_n$ is much more delicate. In particular, conjugation may shorten even very simple irreducible words. The word $s_{3,4}s_{1,2}s_{1,4}s_{3,6} \in J_6$ illustrates this phenomenon:
\begin{align*}
s_{5,6}s_{3,4} \cdot (s_{3,4}s_{1,2}s_{1,4}s_{3,6}) \cdot s_{3,4}s_{5,6} &= s_{5,6} s_{1,2}s_{1,4}s_{3,6}  s_{3,4}s_{5,6} = s_{5,6} s_{1,4}s_{3,4}s_{3,6}  s_{5,6}s_{3,4}  \\
&= s_{5,6} s_{1,4}s_{3,6}s_{5,6}  s_{5,6}s_{3,4} = s_{1,4} s_{5,6} s_{3,6} s_{3,4} \\
&= s_{1,4} s_{3,6} s_{3,4} s_{3,4} = s_{1,4} s_{3,6}.
\end{align*}	
\end{rem}
	
	We finish this section with a remark on the nature of the map $d$. It is not a group morphism: for example, applied to the cactus from Fig.~\ref{Pic:cactus_example}, it yields $\tau_{1,2}\tau_{1,3,4}\tau_{2,3,4}$, whereas a group morphism would have given $\tau_{1,2}\tau_{2,3,4}\tau_{1,2,3}$. However, it is not so far from being one. It is in fact a \emph{group $1$-cocycle}, that is, it satisfies the twisted compatibility relation
	\[d(c_1c_2) = d(c_1) \ {}^{c_1}\!d(c_2), \qquad c_1,c_2 \in J_n,\]
	where the left group action of $J_n$ on $D_n$ is induced from the label-permuting $S_n$-action on $D_n$: ${}^{c}\! t =  {}^{s(c)}\! t$, with $c \in J_n$, $t \in D_n$. In the example from Fig.~\ref{Pic:cactus_example}, we obtain
	\[d(s_{1,2}s_{2,4}s_{1,3})=\tau_{1,2} \ {}^{(2134)}\!\tau_{2,3,4} \ {}^{(4132)}\!\tau_{1,2,3}=\tau_{1,2}\tau_{1,3,4}\tau_{2,3,4}.\]
	
	Note that, restricted to the pure part $PJ_n$, the map $d$ becomes a group morphism, since $s(c)=\Id$ for a pure cactus $c$.

	\section{Twin groups are subgroups of cactus groups}\label{S:Subgroups}
	
	Consider a group $G = \langle S \ | \ R \rangle$ defined by a set of generators $S$ and a set of relations $R$. For any subset of generators $I \subseteq S$, one can extract from~$R$ all the relations $R_I$ involving the generators from $I$ only. This defines a new group $G_I:=  \langle I \ | \ R_I \rangle$, with the obvious map
	\begin{align*}
	\iota_I \colon G_I &\to G,\\
	g &\mapsto g \quad \text{ for all } g \in I.
	\end{align*} 
	Such maps will be called \emph{$\iota$-type maps} in what follows. They need not be injective. 
	\begin{defin}
		A subset of generators $I$ is called \emph{complete} if the above map $\iota_I$ is injective.
	\end{defin}
	
	\begin{exam}
		In a RACG or a RAAG with its standard presentation, any generator subset is complete. It follows from Lemma~\ref{L:RACG_normal} and its analogue for RAAGs.
	\end{exam}
	\begin{exam}
		All generator subsets are complete in braid and symmetric groups with their standard presentations as well.
	\end{exam}
	This is actually true for more general Artin--Tits and Coxeter groups. 
	
	\begin{exam}
		In the virtual braid group $G=VB_3$ with its classical generators $\sigma_1,\sigma_2$ and virtual generators $\tau_1,\tau_2$ and the usual relations, the set $I=\{\sigma_1,\tau_1,\tau_2\}$ is not complete. Indeed, since there are no relations relating $\sigma_1$ only to the $\tau$'s, $G_I$ is the direct product $G_I = \Z * S_3$, and it includes two distinct elements 
		\[\sigma_1\tau_1\tau_2\sigma_1\tau_2\tau_1\sigma_1 \ \text{ and } \  \tau_1\tau_2\sigma_1\tau_2\tau_1\sigma_1\tau_1\tau_2\sigma_1\tau_2\tau_1,\]
		sent by $\iota_I$ to the same element $\sigma_1\sigma_2\sigma_1 = \sigma_2\sigma_1\sigma_2$ of $G$, since $\sigma_2=\tau_1\tau_2\sigma_1\tau_2\tau_1$ in $G$.
	\end{exam}
	
	Cactus groups provide some more interesting counterexamples:
	\begin{exam}
		In the cactus group $G=J_4$ with its usual presentation, there are no relations involving only $s_{1,2}$ and $s_{1,4}$, except for idempotence relations. Thus, for $I=\{s_{1,2},s_{1,4}\}$, one gets $G_I = \Z_2 \ *\ \Z_2$. However, in $G$ these generators satisfy the relation
		\[s_{1,2}s_{1,4}s_{1,2}s_{1,4} = s_{1,4}s_{1,2}s_{1,4}s_{1,2},\]
		which expresses the commutation of $s_{1,2}$ and $s_{3,4} =s_{1,4}s_{1,2}s_{1,4}$. 	
	\end{exam}	
	In the example above, one should add the generator $s_{3,4} =s_{1,4}s_{1,2}s_{1,4}$ to make the set $I=\{s_{1,2},s_{1,4}\}$ complete. We will now prove that this is the only possible completeness defect in cactus groups. More precisely, a generator subset is complete al long as it is stable by certain conjugations.
	
	\begin{defin}
		A collection $\CC$ of sub-intervals of the integer interval $[1,n]$ is called \emph{symmetric} if together with any two nested sub-intervals $[m,r]  \subset [p,q]$ it contains the sub-interval $[p+q-r,p+q-m]$, symmetric to $[m,r]$ with respect to the middle of $[p,q]$.
	\end{defin}
	
	\begin{thm}\label{T:subgroup}
		For any symmetric collection $\CC$ of sub-intervals of $[1,n]$, the family $\{s_I | I \in \CC\}$ of generators of the cactus group $J_n$ (with its standard presentation) is complete.
	\end{thm}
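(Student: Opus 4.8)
The plan is to reduce everything to the reduction procedure of Proposition~\ref{P:cactus_normal}. Write $J_n^\CC = \langle s_I : I \in \CC \mid R_\CC\rangle$ for the group presented by the $\CC$-generators together with the cactus relations $R_\CC$ supported on them, and let $\iota_\CC \colon J_n^\CC \to J_n$ be the natural map; completeness of $\{s_I : I \in \CC\}$ is exactly the assertion that $\iota_\CC$ is injective. So I would take a word $w$ in the $\CC$-generators with $\iota_\CC([w]) = 1$ and try to collapse it to the empty word using only relations from $R_\CC$.

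First I would isolate the combinatorial core: \emph{symmetry of $\CC$ guarantees that every admissible elementary cactus move between two $\CC$-letters is a relation of $R_\CC$ and keeps all letters inside $\CC$.} The annihilation move \eqref{E:j1} and the commutation move \eqref{E:j2} change no intervals, so they are immediate. For the commutation-conjugation move \eqref{E:j3}, suppose $[m,r] \subset [p,q]$ with $[p,q],[m,r] \in \CC$; then $s_{p,q}s_{m,r} = s_{p+q-r,p+q-m}s_{p,q}$, and the third interval $[p+q-r,p+q-m]$ lies in $\CC$ precisely because $\CC$ is symmetric. Reading \eqref{E:j3} in either direction (legitimate, as all generators are involutions) shows that whenever the big interval $s_{p,q}$ is slid past a nested interval, that nested interval is replaced by its mirror inside $[p,q]$, which again belongs to $\CC$; disjoint intervals are unaffected, and partially overlapping intervals admit no move at all. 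Hence any admissible move of the procedure, applied to a word whose letters all lie in $\CC$, outputs such a word and invokes only a relation of $R_\CC$.

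With this in hand the theorem follows quickly. Running the procedure of Proposition~\ref{P:cactus_normal} on $w$, every step stays within $\CC$-letters and uses only relations of $R_\CC$, so the whole reduction is carried out inside $J_n^\CC$; and since $[w]$ is the trivial cactus, Proposition~\ref{P:cactus_normal} guarantees that the procedure terminates in the empty word. Therefore $w = 1$ already in $J_n^\CC$, so $\iota_\CC$ is injective.

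The one genuinely delicate point is the \eqref{E:j3} bookkeeping in the second paragraph: I must be sure that sliding a letter ``all the way'' to its twin, as permitted in Proposition~\ref{P:cactus_normal}, passes through $\CC$-generators at \emph{every} intermediate swap, not merely at the two ends. This I would settle by induction on the number of swaps, the inductive step being exactly the symmetry-closure of $\CC$ verified above. It is also precisely this closure that fails for a non-symmetric collection such as $\{s_{1,2},s_{1,4}\}$ in $J_4$, where the mirror generator $s_{3,4}$ is missing and becomes the obstruction to completeness.
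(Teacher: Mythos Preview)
Your proof is correct and follows the same route as the paper: reduce to Proposition~\ref{P:cactus_normal} and observe that the symmetry hypothesis on $\CC$ is exactly what makes every (conjugation-)commutation and annihilation step stay inside $R_\CC$. The paper's version is terser, while you make explicit the inductive check that each intermediate swap in the reduction keeps all letters within $\CC$; this is a useful clarification but not a different argument.
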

	
	Before giving a proof, let us describe several important particular cases.
	
	\begin{cor}\label{C:CactCact}
		The group $J_n$ can be viewed as a subgroup of $J_{n+k}$, via the map $s_{p,q} \mapsto s_{p,q}$.
	\end{cor}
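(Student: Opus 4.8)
The plan is to realise $J_n$ as one of the subgroups $G_I$ governed by Theorem~\ref{T:subgroup}, applied to the larger group $G = J_{n+k}$. Concretely, inside the integer interval $[1,n+k]$ I would consider the collection
\[\CC = \{\,[p,q] : 1 \le p < q \le n\,\}\]
of all sub-intervals contained in the initial segment $[1,n]$. The associated generators $s_{p,q}$ of $J_{n+k}$ are precisely the generators of $J_n$, under the tautological identification $s_{p,q} \mapsto s_{p,q}$.

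First I would verify that $\CC$ is symmetric. If $[m,r] \subset [p,q]$ are two nested members of $\CC$, then $p \le p+q-r \le p+q-m \le q$, so the symmetric interval $[p+q-r,\,p+q-m]$ is again contained in $[p,q] \subseteq [1,n]$ and hence lies in $\CC$. Theorem~\ref{T:subgroup} therefore applies, and tells us that the $\iota$-type map $\iota_\CC \colon (J_{n+k})_\CC \to J_{n+k}$ is injective, where $(J_{n+k})_\CC$ denotes the group presented by the generators $\{s_{p,q} : [p,q]\in\CC\}$ together with the cactus relations involving only these generators.

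It then remains to identify the source $(J_{n+k})_\CC$ with $J_n$. Its generators are the $s_{p,q}$ with $1 \le p < q \le n$, matching those of $J_n$. Its relations are exactly the relations \eqref{E:j1}--\eqref{E:j3} of $J_{n+k}$ all of whose generators lie in $\CC$: a relation of type \eqref{E:j1} or \eqref{E:j2} qualifies precisely when its one or two intervals are contained in $[1,n]$, reproducing the corresponding relations of $J_n$ verbatim. For a relation of type \eqref{E:j3} the symmetry of $\CC$ is what makes the bookkeeping close up: whenever $[p,q],[m,r] \subseteq [1,n]$ with $[m,r] \subset [p,q]$, the conjugated interval $[p+q-r,\,p+q-m]$ automatically lies in $[1,n]$, so the relation is retained, and conversely every \eqref{E:j3} relation of $J_n$ arises in this way. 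Thus the restricted relation set coincides exactly with the defining relations of $J_n$, yielding a presentation-preserving isomorphism $(J_{n+k})_\CC \cong J_n$.

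Composing this isomorphism with the injective map $\iota_\CC$ produces the desired embedding $J_n \hookrightarrow J_{n+k}$, $s_{p,q} \mapsto s_{p,q}$. The only genuinely delicate point is the verification in the previous paragraph, namely that passing to the sub-collection $\CC$ neither discards a defining relation of $J_n$ nor introduces a spurious one; this is exactly where the symmetry hypothesis of Theorem~\ref{T:subgroup} intervenes, through the closure of $\CC$ under the commutation-conjugation move \eqref{E:j3}.
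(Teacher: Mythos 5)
Your proof is correct and takes essentially the same route as the paper: Corollary~\ref{C:CactCact} is presented there as an immediate particular case of Theorem~\ref{T:subgroup}, obtained exactly as you do by taking the symmetric collection of all sub-intervals of $[1,n]$ sitting inside $[1,n+k]$ and applying the completeness statement. Your explicit check that the relations of $J_{n+k}$ involving only these generators reproduce the presentation of $J_n$ verbatim (with the symmetry of $\CC$ ensuring the \eqref{E:j3} relations close up) is precisely the implicit content of the paper's derivation.
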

	
	\begin{cor}\label{C:TwCact}
		The twin group $Tw_n$ can be viewed as a subgroup of the cactus group $J_n$, via the map $s_{p,p+1} \mapsto s_{p,p+1}$.
	\end{cor}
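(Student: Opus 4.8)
The plan is to realise $Tw_n$ as the group picked out by the collection $\CC = \{[p,p+1] : 1 \le p \le n-1\}$ of all length-two sub-intervals of $[1,n]$, and then to apply Theorem~\ref{T:subgroup}. The generators associated with $\CC$ are exactly the $s_{p,p+1}$, which are the standard generators of $Tw_n = J_n^{2,2}$, so two things must be verified: that the group $G_\CC = \langle s_{p,p+1} \mid R_\CC \rangle$ defined by the cactus relations among these generators is genuinely $Tw_n$, and that $\CC$ is symmetric in the sense required by the theorem.

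The single observation driving both verifications is that two distinct length-two intervals are never nested: $[m,m+1] \subseteq [p,p+1]$ forces $p \le m$ and $m+1 \le p+1$, hence $m = p$. Consequently no relation of type \eqref{E:j3} can occur among the $s_{p,p+1}$, since such a relation needs a proper nesting $[m,r] \subset [p,q]$. Thus $R_\CC$ reduces to the idempotence relations \eqref{E:j1}, namely $s_{p,p+1}^2 = 1$, together with the commutation relations \eqref{E:j2}, namely $s_{p,p+1}s_{m,m+1} = s_{m,m+1}s_{p,p+1}$ for disjoint intervals, i.e. for $|p-m| \ge 2$. These are precisely the defining relations of the right-angled Coxeter group $Tw_n$, giving the identification $G_\CC \cong Tw_n$ under which the $\iota$-type map becomes $s_{p,p+1} \mapsto s_{p,p+1}$.

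The same observation makes $\CC$ symmetric for free: the only nesting available inside $\CC$ is the trivial one $[p,p+1] = [p,p+1]$, whose reflected interval $[p+q-r,p+q-m]$, with $m=p$ and $r=q=p+1$, is again $[p,p+1] \in \CC$, so the defining condition of a symmetric collection holds vacuously. Theorem~\ref{T:subgroup} then applies and yields that the family $\{s_{p,p+1}\}$ is complete, i.e. the map $s_{p,p+1} \mapsto s_{p,p+1}$ from $Tw_n$ to $J_n$ is injective.

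In truth there is no serious obstacle here once one notices that intervals of equal length are never properly nested: this single fact simultaneously removes all \eqref{E:j3} relations and makes the symmetry condition vacuous. The only point deserving genuine care is the bookkeeping confirming that $G_\CC$ is literally the standard presentation of $Tw_n$, rather than a proper quotient or a group carrying extra relations; everything else follows formally from Theorem~\ref{T:subgroup}.
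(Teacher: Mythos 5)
Your proposal is correct and follows exactly the paper's route: the paper also obtains this corollary by applying Theorem~\ref{T:subgroup} to the (clearly symmetric) collection $\CC_{2,2}$ of all length-two intervals, using the definition $Tw_n = J_n^{2,2}$. Your added verifications---that nesting among equal-length intervals is impossible, so the extracted relations reduce to the standard RACG presentation of $Tw_n$ and the symmetry condition holds vacuously---are precisely the details the paper leaves implicit.
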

	
	More generally, given some $2 \leq i \leq j \leq n$, the sub-interval collection
	\[\CC_{i,j}:=\{[p,q] \ | \ 1\le p < q \le n,\ i \leq q-p+1 \leq j\}\]
	is clearly symmetric. Theorem~\ref{T:subgroup} thus applies to the group $J_n^{i,j}$ defined by the generators $s_{p,q}$, where $[p,q] \in \CC_{i,j}$, and the cactus relations \eqref{E:j1}-\eqref{E:j3}. In other words, in $J_n^{i,j}$ we keep only those generators whose leaf number is between $i$ and $j$. We get
	
	\begin{cor}\label{C:SliceCact}
		The group $J_n^{i,j}$ can be viewed as a subgroup of $J_n$, via the map $s_{p,q} \mapsto s_{p,q}$.
	\end{cor}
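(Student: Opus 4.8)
The plan is to read Corollary~\ref{C:SliceCact} as a direct instance of Theorem~\ref{T:subgroup} applied to the collection $\CC_{i,j}$, so that the only genuine task is to verify that $\CC_{i,j}$ is symmetric. In the general framework of the section I would take $G=J_n$ with its standard presentation and let $I=\{s_{p,q}\mid [p,q]\in\CC_{i,j}\}$ be the subset of retained generators. First I would identify the abstractly defined group $G_I=\langle I\mid R_I\rangle$ with the group $J_n^{i,j}$ of the statement, and the $\iota$-type map $\iota_I$ with the map $s_{p,q}\mapsto s_{p,q}$ appearing in the corollary; then completeness of $I$ is exactly the injectivity we want.

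Next I would establish the symmetry of $\CC_{i,j}$ by a one-line leaf-number computation. Suppose $[m,r]\subset[p,q]$ are two nested members of $\CC_{i,j}$, so $p\le m\le r\le q$. Their reflection $[p+q-r,\,p+q-m]$ is again a valid sub-interval of $[p,q]\subseteq[1,n]$, since $p+q-r\ge p$, $p+q-m\le q$, and $p+q-r\le p+q-m$. Its leaf number is $(p+q-m)-(p+q-r)+1=r-m+1$, which is precisely the leaf number of $[m,r]$. Because $[m,r]\in\CC_{i,j}$ means $i\le r-m+1\le j$, the reflected interval obeys the identical constraint and hence also lies in $\CC_{i,j}$. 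Thus $\CC_{i,j}$ is symmetric, and Theorem~\ref{T:subgroup} applies to conclude that $\{s_I\mid I\in\CC_{i,j}\}$ is complete, i.e.\ $\iota_I$ is injective.

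The one place requiring care---and the main (modest) obstacle---is the bookkeeping in the identification $G_I=J_n^{i,j}$, which is in fact where the symmetry hypothesis earns its keep. A relation of type~\eqref{E:j3} belongs to $R_I$ only when all three of $[p,q]$, $[m,r]$ and $[p+q-r,p+q-m]$ lie in $\CC_{i,j}$, whereas the defining relations of $J_n^{i,j}$ impose~\eqref{E:j3} whenever $[p,q]$ and $[m,r]$ are retained. Symmetry guarantees that the third interval is present as soon as the first two are, so no~\eqref{E:j3} relation among the retained generators is silently dropped, and $G_I$ agrees with $J_n^{i,j}$ on the nose. Beyond this verification, the corollary carries no further content: everything of substance is packaged in Theorem~\ref{T:subgroup}, and the symmetry of $\CC_{i,j}$ is immediate from the fact that reflection inside a bracketing interval preserves leaf number.
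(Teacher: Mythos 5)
Your proof is correct and takes essentially the same route as the paper: the paper derives Corollary~\ref{C:SliceCact} by observing that $\CC_{i,j}$ is ``clearly symmetric'' (reflection inside a bracketing interval preserves leaf number) and then invoking Theorem~\ref{T:subgroup}. Your additional bookkeeping identifying $G_I$ with $J_n^{i,j}$ simply spells out what the paper leaves implicit, and it is accurate.
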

	
	\begin{proof}[Proof of Theorem~\ref{T:subgroup}]
		Take a symmetric collection $\CC$ of sub-intervals of $[1,n]$. Consider a word $w \in FJ_n$ which contains only generators  $s_I$ with $I \in \CC$, and which represents the trivial element in $J_n$. We need to show that it also represents the trivial element in $(J_n)_I$. According to Proposition~\ref{P:cactus_normal}, the word $w$ can be turned into the trivial word by applying commutation, commutation-conjugation and annihilation relations. But all these relation are also available in the group $(J_n)_I$; in fact, the symmetry condition on~$I$ was imposed precisely to preserve all commutation-conjugation relations from $J_n$ in $(J_n)_I$.
	\end{proof}
	
	\begin{rem}
		If one is interested in the $i$-leaf group $J_n^{i,i}$ only (for instance, the twin group $Tw_n=J_n^{2,2}$), then in the arguments above the Gauss diagram group $D_n$ can be replaced with a smaller RACG. Concretely, consider the \emph{width $i$ Gauss diagram group} $D_n^i$ generated by the idempotents $\tau_I$ for all $i$-element subsets $I$ of $\{1,2,\ldots,n\}$, which commute if the corresponding subsets are disjoint. The symmetric group~$S_n$ still acts on such subsets $I$, and hence on $D_n^i$. Consider the following \emph{eraser map}:
		
		\begin{align*}
		\varepsilon_i \colon J_n &\to D_n^i \rtimes S_n,&\\
		s_{p,q} &\mapsto 1 &\text{ if } q-p+1 <i,\\
		s_{p,q} &\mapsto (\tau_{[p,q]},s(s_{p,q})) &\text{ if } q-p+1 =i,\\
		s_{p,q} &\mapsto (1,s(s_{p,q})) &\text{ if } q-p+1 >i.
		\end{align*}
		Going through the defining relations \eqref{E:j1}-\eqref{E:j3} of~$J_n$, one checks that this map is well defined. Now, in the diagram below, the rectangle and the square clearly commute:
		\[ \begin{tikzcd}
		J_n^i \arrow{r}{\iota} &J_n \arrow{r}{\rho} &D_n \rtimes S_n \arrow{r}{\pi_1} &D_n\\
		J_n^i \arrow{r}{\iota} \arrow{u}{\Id} &J_n \arrow{r}{\varepsilon_i} &D_n^i \rtimes S_n \arrow{u}{\iota} \arrow{r}{\pi_1} &D_n^i \arrow{u}{\iota}.
		\end{tikzcd}
		\]
		Here we abusively use the same notation $\iota$ for all $\iota$-type maps, and the same notation $\pi_1$ for all (set-theoretic) projections onto the first component of a semi-direct product. Then the injectivity of the total map of the first line implies the injectivity for the second line. In other words, we obtain an injective group $1$-cocycle $J_n^i \to D_n^i$.
		
		\noindent In the same vein, the symmetric group~$S_n$ above can be replaced with the subgroup $S_n^i$ generated by all the size $i$ flops $s(s_{p,q})$. It would be interesting to understand the structure of these permutation subgroups.
	\end{rem}
	
	The eraser map from the above remark admits the following variation:
	\begin{align*}
	\epsilon_i \colon J_n &\to J_n^{i,n},&\\
	s_{p,q} &\mapsto 1 &\text{ if } q-p+1 <i,\\
	s_{p,q} &\mapsto s_{p,q} &\text{ if } q-p+1 \geq i.
	\end{align*}
	In other words, it erases all generators with leaf number $<i$. A quick direct verification shows that it is well defined and surjective; the map $\iota_i \colon J_n^{i,n} \to J_n$, $s_{p,q} \mapsto s_{p,q}$, is its section (cf. Corollary~\ref{C:SliceCact}). The subgroup $J_n^{2,i-1} \overset{\iota'_i}{\hookrightarrow} J_n$, and hence its normal closure $\langle\langle J_n^{2,i-1} \rangle\rangle$, is by construction in the kernel of $\epsilon_i$. We will now prove that this is the whole kernel. In particular, this yields the following semi-direct decompositions of the cactus groups:
	\[J_n \simeq \langle\langle J_n^{2,i-1} \rangle\rangle \rtimes J_n^{i,n}.\]
	
	\begin{prop}\label{P:CactusSlice}
		The maps above define the following split exact sequence:
		\[0 \longrightarrow \langle\langle J_n^{2,i-1} \rangle\rangle \overset{\iota'_i}{\longrightarrow} J_n \underset{\iota_i}{\overset{\epsilon_i}{\myrightleftarrows{\rule{.7cm}{0cm}}}} J_n^{i,n} \longrightarrow  0.\]
	\end{prop}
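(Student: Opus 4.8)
The plan is to establish exactness at each of the three non-trivial spots of the sequence, leaning on what has just been recorded before the statement. Two of the three conditions come essentially for free: the map $\epsilon_i$ is surjective and satisfies $\epsilon_i \circ \iota_i = \Id$, so $\iota_i$ is a genuine section and the sequence splits, giving exactness at $J_n^{i,n}$; meanwhile $\iota'_i$ is literally the inclusion of a subgroup of $J_n$, hence injective, which is exactness at $\langle\langle J_n^{2,i-1} \rangle\rangle$. It was also already noted that $\langle\langle J_n^{2,i-1} \rangle\rangle \subseteq \Ker \epsilon_i$. Thus the whole content of the proposition reduces to the reverse inclusion $\Ker \epsilon_i \subseteq \langle\langle J_n^{2,i-1} \rangle\rangle$, equivalently to showing that the induced surjection $\bar\epsilon_i \colon J_n / \langle\langle J_n^{2,i-1} \rangle\rangle \twoheadrightarrow J_n^{i,n}$ is an isomorphism.

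I would prove this by comparing presentations through Tietze transformations. The quotient $J_n / \langle\langle J_n^{2,i-1} \rangle\rangle$ is presented by the generators $s_{p,q}$ and the cactus relations \eqref{E:j1}--\eqref{E:j3}, together with the extra relations $s_{p,q} = 1$ for every interval of leaf number $< i$. I would then eliminate all these \emph{small} generators by substituting $1$ for each of them, and check that the surviving relations are exactly the cactus relations among the \emph{large} generators (leaf number $\geq i$), that is, the defining relations of $J_n^{i,n}$.

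The crucial point---and the only place where something must genuinely be checked---is the behaviour of the commutation-conjugation relation \eqref{E:j3}. Its key feature is that the reflected interval $[p+q-r, p+q-m]$ has the same length as $[m,r]$, so the \emph{leaf number is preserved}: a small generator can only be conjugated to a small generator, and the small/large partition is never mixed by the relations. Running through the cases, the relations \eqref{E:j1} and \eqref{E:j2} involving a small generator collapse to trivial identities, and in \eqref{E:j3} either all three generators are large (yielding precisely a relation of $J_n^{i,n}$), or $s_{m,r}$ together with its reflection $s_{p+q-r,p+q-m}$ are both small, in which case the relation degenerates to $s_{p,q} = s_{p,q}$ when $[p,q]$ is large, and to a trivial identity when $[p,q]$ is small. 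Hence the substitution introduces no new relation among the large generators, while conversely every defining relation of $J_n^{i,n}$ survives unchanged. This identifies the Tietze-reduced presentation with that of $J_n^{i,n}$, so $\bar\epsilon_i$ is an isomorphism and $\Ker \epsilon_i = \langle\langle J_n^{2,i-1} \rangle\rangle$.

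Assembling the pieces yields the split short exact sequence, and the splitting furnishes the semi-direct product decomposition $J_n \simeq \langle\langle J_n^{2,i-1} \rangle\rangle \rtimes J_n^{i,n}$ announced above. The main obstacle is therefore purely a matter of bookkeeping: making the case analysis of \eqref{E:j3} airtight, and in particular confirming that the leaf number is invariant under the conjugation action so that the small and large generator families remain cleanly separated after the quotient.
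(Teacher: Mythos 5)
Your proof is correct, but it follows a genuinely different route from the paper's. The paper argues at the level of words, using its solution to the word problem (Proposition~\ref{P:cactus_normal}): given $c\in\Ker(\epsilon_i)$ with representative $w$, the erased word $\epsilon_i(w)$ reduces to the empty word by (conjugation-)commutations and annihilations, and this reduction is then lifted back to $w$ by letting each large letter $l$ jump over a small letter $m$ at the cost of replacing $m$ by its conjugate $lml$; when all large letters have cancelled, $c$ is exhibited explicitly as a product of conjugates of small letters, hence lies in $\langle\langle J_n^{2,i-1}\rangle\rangle$. Your Tietze-transformation argument instead compares presentations: the quotient $J_n/\langle\langle J_n^{2,i-1}\rangle\rangle$ is presented by adding the relations $s_{p,q}=1$ for small generators, and your case analysis of \eqref{E:j1}--\eqref{E:j3} (hinging, correctly, on the fact that the reflection in \eqref{E:j3} preserves leaf number, so the small/large dichotomy is stable) shows that eliminating the small generators leaves exactly the defining presentation of $J_n^{i,n}$. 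Each approach has its advantages. Yours is more elementary and self-contained: it bypasses the entire $D_n$-machinery (the injective cocycle $d$, Lemma~\ref{L:RACG_normal}, Proposition~\ref{P:cactus_normal}) and also Corollary~\ref{C:SliceCact}, which the paper's proof implicitly invokes to view the erased word as trivial in $J_n$; as a by-product it re-proves that $\epsilon_i$ is well defined and that $\iota_i$ is injective, and it adapts verbatim to the more general decompositions $J_n^{i',j}\simeq\langle\langle J_n^{i',i-1}\rangle\rangle\rtimes J_n^{i,j}$ mentioned after the proposition. The paper's proof, on the other hand, is constructive in a way yours is not: it produces an explicit rewriting procedure turning any element of the kernel into a visible product of conjugates of small generators, in keeping with the normal-form techniques used throughout the paper.
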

	
	\begin{proof}
		It remains to prove that any cactus $c$ in the kernel of the eraser map $\epsilon_i$ lies in fact in the normal closure $\langle\langle J_n^{2,i-1} \rangle\rangle$. Take a word $w \in FJ_n$ representing~$c$. Since $c \in \Ker(\epsilon_i)$, one can erase all the letters from~$w$ with leaf number $<i$, and then (permutation-)commute together and annihilate pairs of remaining letters, in well-chosen order, until the word becomes empty, as explained in Proposition~\ref{P:cactus_normal}. Now, this (permutation-)commutation and annihilation can still be performed when the  ``small'' letters are not erased: to move a letter $l$ over a small letter $m$ (or its conjugate), simply replace $m$ by its $l$-conjugate, since $lm = (lml)l$ and $ml=l(lml)$. When the process stops, one is left with a product of conjugates of small letters representing~$c$. 
	\end{proof}
	
	One can push the above arguments slightly further and show that, for any $2 \leq i' \leq i \leq j \leq j' \leq n$, $J_n^{i,j}$ can be viewed as a subgroup of $J_n^{i',j'}$, via the map $s_{p,q} \mapsto s_{p,q}$. This defines a functor from the poset of integer sub-intervals of $[1,n]$ to the category of subgroups of~$J_n$. Moreover, for any $2 \leq i' \leq i \leq j  \leq n$, one has the decomposition
	\[J_n^{i',j} \simeq \langle\langle J_n^{i',i-1} \rangle\rangle \rtimes J_n^{i,j}.\]
	
	A possible application of these constructions is the filtration 
	\[\langle\langle J_n^{2,n-2} \rangle\rangle \triangleleft J_n^{2,n-1} \triangleleft J_n\]
	with RACG quotients
	\begin{align*}
	J_n / J_n^{2,n-1} \simeq J_n^{n,n} &\simeq \Z_2,\\
	J_n^{2,n-1} / \langle\langle J_n^{2,n-2} \rangle\rangle  \simeq J_n^{n-1,n-1} &\simeq \Z_2 \ast \Z_2 \; \text{ if } n \geq 3.
	\end{align*}
	However, understanding the structure of the next piece, $\langle\langle J_n^{2,n-2} \rangle\rangle$, seems difficult even for $n=4$.
	
\begin{rem}
In this section, we have seen that $J_n$ contains many RACG subgroups. It would be interesting to find out whether \textbf{all} RACGs can be realised inside cactus groups. For instance, a tedious direct verification shows that one can include any RACG with $\leq 5$ generators into a (sufficiently big) cactus group by sending each generator to a generator (as usual for the standard presentation), except for the ``pentagon'' group 
\[\langle g_1, \ldots,g_5 \ | \ \forall i, g_i^2=1 \text{ and } g_i g_{i+1} = g_{i+1}g_i \rangle.\]
Here $g_6$ is identified with $g_1$.	
\end{rem}	
	
	\section{Torsion and center of cactus groups}\label{S:Torsion}
	
	Many basic group-theoretic questions are easy to answer for a RACG $G$. For instance,
	\begin{enumerate}[(1)]
		\item Its center $Z(G)$ is generated by all its \emph{friendly} generators (that is,  the generators of~$G$ commuting with all other generators). Thus $Z(G) \simeq \Z_2^f$, $f$ being the number of the friendly generators of~$G$.
		\item The only torsion $G$ has is of order $2$. More precisely, $2$-torsion elements are the conjugates of products of pairwise commuting generators.
	\end{enumerate} 
	In particular, for the Gauss diagram group $D_n$, we have the center
	\[Z(D_n) = \langle \tau_{1,2,\ldots,n} \rangle \simeq \Z_2,\]
	and a big $2$-torsion part, without any other torsion.
	
	The aim of this section is to determine the center and the torsion of the cactus group $J_n$ and its pure part $PJ_n$. Our main tool is the connection between $J_n$ and the RACG $D_n$. Curiously, the answers are close to but different from those for $D_n$.
	
	\begin{thm}\label{T:Center}
		The cactus group $J_n$ is centerless whenever $n>2$.
	\end{thm}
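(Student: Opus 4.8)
The plan is to dispose of the non-pure part of $Z(J_n)$ using the morphism $s$, then to turn centrality into a single family of equations inside the RACG $D_n$ via the cocycle $d$, and finally to solve those equations.

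First I would reduce to the pure part. The morphism $s\colon J_n\to S_n$ is onto, since the $s(s_{p,p+1})$ are exactly the adjacent transpositions. Hence $s$ carries $Z(J_n)$ into $Z(S_n)$, which is trivial for $n>2$. Therefore $Z(J_n)\subseteq\Ker s=PJ_n$, and it suffices to prove that a pure cactus $c$ commuting with every generator is trivial.

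Next I would pass to $D_n$. Fix such a $c$ and set $D=d(c)$, recalling that on $PJ_n$ the map $d$ is a genuine group morphism. For each generator we have $s_{p,q}\,c\,s_{p,q}=c$. Applying $d$, using the cocycle identity $d(c_1c_2)=d(c_1)\,{}^{c_1}\!d(c_2)$ together with $d(s_{p,q})=\tau_{[p,q]}$, $s(c)=\Id$, and the fact that the flip $s(s_{p,q})$ fixes the set $[p,q]$, I obtain for every interval $[p,q]$
\[{}^{s_{p,q}}\!D=\tau_{[p,q]}\,D\,\tau_{[p,q]}.\]
Thus everything reduces to showing that the only element of $d(PJ_n)$ satisfying all of these is $D=1$. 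To solve this in $D_n$, note that the action of $s(s_{p,q})$ is a length-preserving automorphism, whereas conjugation by a single generator changes length by at most $2$; the displayed equation therefore forces, for each interval $[p,q]$, that $\tau_{[p,q]}$ is a descent of $D$ on exactly one side, or else commutes with $D$, in which case moreover ${}^{s_{p,q}}\!D=D$. Since left (resp. right) descent sets in a RACG consist of pairwise-commuting generators, the interval generators that are left, resp. right, descents form two laminar families of intervals; feeding in the reversal symmetry from $[1,n]$ (for which $\tau_{1,2,\ldots,n}$ is central, so the equation reads ${}^{s_{1,n}}\!D=D$) and the adjacent-transposition symmetries, I would argue that $D$ commutes with every interval generator and is $S_n$-fixed, hence $D\in Z(D_n)=\langle\tau_{1,2,\ldots,n}\rangle$. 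Finally $\tau_{1,2,\ldots,n}=d(s_{1,n})$ and $s(s_{1,n})$ is the full reversal, so $s_{1,n}\notin PJ_n$; injectivity of $d$ then rules out $D=\tau_{1,2,\ldots,n}$, leaving $D=1$ and so $c=1$.

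The first two steps and the final elimination are routine; the hard part is the RACG analysis, that is, upgrading the per-interval descent/commutation dichotomy to the global conclusion $D\in Z(D_n)$. Abelianising shows only that the support of $D$ is a union of $S_n$-orbits, i.e. of size classes; mapping to the width-$k$ Gauss diagram groups $D_n^k$ (whose centres are trivial for $3\le k\le n-1$) shows moreover that $D$ dies in each such quotient, but this still falls short of pinning $D$ down. The genuine obstacle is the width-$2$ part: there the interval generators are only the ``path'' transpositions $\tau_{\{i,i+1\}}$, whose descent constraints are compatible with an alternating left/right colouring and so do not by themselves force triviality. One must instead exploit the full conjugation equations, not merely their length shadow, together with the permutation symmetries, exactly as in the hand computation that settles the base case $n=3$.
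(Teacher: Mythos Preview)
Your reduction to $c\in PJ_n$ via the surjection $s$ is clean and correct, and the cocycle computation yielding
\[{}^{s_{p,q}}\!D=\tau_{[p,q]}\,D\,\tau_{[p,q]}\qquad\text{for all intervals }[p,q]\]
is right. The final elimination of $D=\tau_{[1,n]}$ is also fine. But you yourself identify the gap: you have not shown that these equations force $D\in Z(D_n)$. Your descent-set sketch (``$\tau_{[p,q]}$ is a one-sided descent or commutes with $D$'') does not actually follow from the length equality $\ell(\tau D\tau)=\ell(D)$ alone, and even granting it, the laminar/colouring heuristics you describe are, as you say, compatible with non-trivial $D$ in the width-$2$ part. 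So the argument stops at exactly the point where the content lies.

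The paper's proof avoids this obstacle by working one level finer: instead of the element-level equation for $D$, it takes a \emph{minimal} word $w$ for $c$ and analyses $s_{p,q}w$ versus $ws_{p,q}$ using the normal-form machinery (Lemmas~\ref{L:d_lift} and~\ref{L:RACG_normal}). Minimality kills the degenerate cases, and in the surviving case one obtains that \emph{every individual letter} $\tau_I$ of $\od(w)$ commutes with $\tau_{[p,q]}$ (up to an $s(c)$-twist) in $D_n$. This letter-by-letter statement is far stronger than your global conjugation equation: a single generator $\tau_I$ commuting with $\tau_{[p,q]}$ for all intervals $[p,q]$ forces $I=[1,n]$, so $\od(w)$ is empty or $\tau_{[1,n]}$. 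The moral is that to close your gap you need to pull the problem back from $D_n$ to irreducible words in $J_n$, where minimality gives the leverage; the equations for $D$ alone, viewed abstractly in the RACG, are too coarse.
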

	
	In the case $n=2$, we have $J_2 \simeq \Z_2$, and $PJ_2$ is trivial. We will no longer mention this case in what follows.
	
	\begin{proof}
		Let $w \in FJ_n$ be a word representing a non-trivial central element $c \in J_n$. It can be assumed to be of minimal length among representatives of non-trivial central elements. We will show that the Gauss diagram $d(c)$ is then central in $D_n$. As recalled above, this would imply $d(c)= \tau_{1,2,\ldots,n}$ or $1$. Since $d$ is injective and $c$ non-trivial, this means $c=s_{1,n}$. But the generator $s_{1,n}$ does not commute with $s_{1,2}$ when $n>2$, since
		\[d(s_{1,n}s_{1,2})=\tau_{1,2,\ldots,n}\tau_{n,n-1} \neq \tau_{1,2,\ldots,n} \tau_{1,2} = \tau_{1,2} \tau_{1,2,\ldots,n} = d(s_{1,2}s_{1,n}).\]
		Thus there are no non-trivial central elements in $J_n$.
		
		Take a generator $s_{p,q}$ of~$J_n$. Since $c s_{p,q} = s_{p,q} c$, Lemmas \ref{L:d_lift} and \ref{L:RACG_normal} leave us with two options.
		
		\textbf{Option 1:} The words $ws_{p,q}$ and $s_{p,q}w$ are irreducible. Then $s_{p,q}w$ can be transformed to $ws_{p,q}$ by commutation and commutation-conjugation relations only. In particular, a letter $l$ in $s_{p,q}w$ can be (conjugation-)commuted to the end of the word and yield the letter $s_{p,q}$.
		
		\begin{description}
			\item[Case 1] The letter $l$ is the initial letter $s_{p,q}$ of $s_{p,q}w$. At the level of Gauss diagrams, this means that all letters in the word $\od(w) \in FD_n$ commute with ${}^{s(c)}\!\tau_{p,\ldots,q}$ in $D_n$.
			\item[Case 2]\label{C:Case2} The letter $l$ is from the word $w$. This means that (conjugation-)commutation can transform $w$ into $w's_{p,q}$. But then $ws_{p,q} = w's_{p,q}s_{p,q} = w'$ in~$J_n$, and the word $ws_{p,q}$ is no longer minimal.
		\end{description}
		
		\textbf{Option 2:} The words $ws_{p,q}$ and $s_{p,q}w$ are reducible (simultaneously, since all minimal representatives of a cactus have the same length). Recalling that the word~$w$ is minimal, and looking what this means for $ws_{p,q}$ on the $D_n$ side, one concludes that we are in the situation of the Case 2 above: a letter $l'$ can be (conjugation-)commuted to the end of~$w$, so that $w$ becomes $w's_{p,q}$. But the same argument applied to $s_{p,q}w$ shows that a letter $l''$ can be (conjugation-)commuted to the beginning of~$w$, so that $w$ becomes $s_{p,q}w''$. Again, two cases are possible.
		
		\begin{description}
			\item[Case 1] The letters $l'$ and $l''$ occupy the same position in~$w$. At the level of Gauss diagrams, this means that all letters in the word $\od(w) \in FD_n$ commute with ${}^{s(c)}\!\tau_{p,\ldots,q}$ in $D_n$.
			\item[Case 2] The letters $l'$ and $l''$ occupy different positions in~$w$. Then (conjugation-)commutation can transform $w$ into $s_{p,q} u s_{p,q}$. The relation $ws_{p,q} = s_{p,q}w$ implies $us_{p,q} = s_{p,q}u$ in $J_n$, hence $w=s_{p,q} u s_{p,q}=u s_{p,q}s_{p,q} = u$ in $J_n$. We get a shorter word $u$ representing the same cactus as $w$, which contradicts the minimality of~$w$.
			
		\end{description}
		
		Since these arguments work for any letter $s_{p,q}$, one concludes that the diagram $d(c) \in D_n$ represented by the word $\od(w) \in FD_n$ is central, as claimed.
	\end{proof}
	
	\begin{thm}\label{T:Center_Pure}
		The pure cactus subgroup $PJ_n$ has trivial centralizer in $J_n$ whenever $n>3$. In particular, its center is trivial. 
	\end{thm}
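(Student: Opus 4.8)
The plan is to show that the only element of $J_n$ commuting with all of $PJ_n$ is the identity; the ``in particular'' then follows at once, since $Z(PJ_n) \subseteq C_{J_n}(PJ_n)$, the centraliser of $PJ_n$ in $J_n$. So let $c \in J_n$ commute with every pure cactus, and set $\sigma = s(c)$. The first move is to feed the relations $cp = pc$, $p \in PJ_n$, into the Gauss diagram group. Since $d$ is a $1$-cocycle and $s(p) = \Id$, we have $d(cp) = d(c)\,{}^{\sigma}d(p)$ while $d(pc) = d(p)\,d(c)$, so $cp=pc$ becomes
\[{}^{\sigma}d(p) = d(c)^{-1}\, d(p)\, d(c) \qquad (p \in PJ_n)\]
in $D_n$.

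First I would pin down $\sigma$. Abelianising $D_n$ (whose abelianisation is $\bigoplus_{I}\Z_2$, one summand per generator $\tau_I$) kills the conjugation on the right, so the displayed relation forces $\sigma$, acting by the coordinate permutation $\tau_I \mapsto \tau_{\sigma(I)}$, to fix the class $\overline{d(p)} \in D_n^{\mathrm{ab}}$ for every pure cactus $p$. It then suffices to exhibit enough pure cacti whose abelianised diagrams are jointly fixed only by $\sigma=\Id$. Good candidates are the pure elements $(s_{k,k+1}s_{k,k+2})^3$, whose diagram abelianises to $\tau_{k,k+1}+\tau_{k,k+2}+\tau_{k+1,k+2}+\tau_{k,k+1,k+2}$ (forcing $\sigma$ to stabilise each triple $\{k,k+1,k+2\}$, hence every consecutive pair and every point), supplemented by a few ``asymmetric'' pure cacti such as $(s_{1,2}s_{2,4})^3$ to kill the residual reflections; the bound $n>3$ is exactly what makes this family faithful. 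This yields $\sigma = \Id$, i.e. $c \in PJ_n$.

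With $c$ pure, the displayed relation reads $d(p) = d(c)^{-1}d(p)d(c)$ for all $p$, i.e. $d(c)$ centralises the image $d(PJ_n) \le D_n$. The final step is the RACG computation that this centraliser is as small as possible, $C_{D_n}(d(PJ_n)) = \Gr{\tau_{1,2,\ldots,n}} \simeq \Z_2$; one checks this by testing $d(c)$ against explicit pure diagrams such as $\tau_{1,2,3}\tau_{1,2,4}\tau_{1,3,4}\tau_{2,3,4} = d((s_{1,3}s_{2,4})^2)$ and their relabellings, using standard facts about centralisers in right-angled Coxeter groups. Thus $d(c) \in \{1, \tau_{1,2,\ldots,n}\}$. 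But $\tau_{1,2,\ldots,n} = d(s_{1,n})$ and $d$ is injective, so $d(c) = \tau_{1,2,\ldots,n}$ would give $c = s_{1,n} \notin PJ_n$, a contradiction; hence $d(c) = 1$ and $c = 1$.

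The main obstacle is getting sufficient control over the image $d(PJ_n)$ inside the RACG $D_n$: both the faithfulness statement used to annihilate $\sigma$ and the centraliser computation used to annihilate $d(c)$ rest on producing an explicit, manageable supply of pure cacti together with their Gauss diagrams. The permutation-module structure of $\bigoplus_{k}\Z_2\bigl[\binom{[n]}{k}\bigr]$ helps organise the first, and the commutation-graph description of centralisers the second, but verifying that the chosen cacti suffice, and that $n>3$ is the precise threshold, is where the real work lies.
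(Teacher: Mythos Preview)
Your strategy is genuinely different from the paper's, and the difference is instructive.

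The paper does \emph{not} first reduce to $c\in PJ_n$. Instead it recycles the minimal-word analysis from the proof that $J_n$ is centerless: pick a minimal representative $w$ of $c$, and for each generator $s_{p,q}$ of leaf number $>2$ build a pure cactus $\tilde s_{p,q}=s_{p,q}\,s_{p_1,p_1+1}s_{p_2,p_2+1}\cdots$ by appending $2$-leaf corrections. Running the Option/Case dichotomy of the earlier proof on $c\tilde s_{p,q}=\tilde s_{p,q}c$ forces every \emph{letter} of $\od(w)$ to commute (in $D_n$) with $\tau_I$ for every $I$ of size $>2$. A short direct argument then shows that $\tau_{[1,n]}$ is the only such ``almost friendly'' generator when $n>3$, so $c\in\{1,s_{1,n}\}$, and $s_{1,n}$ is excluded by exhibiting one pure cactus it fails to commute with. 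No separate analysis of $\sigma=s(c)$ is needed, and no centraliser in $D_n$ is ever computed.

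Your plan is plausible but has a real gap at the centraliser step. You assert $C_{D_n}(d(PJ_n))=\langle\tau_{[1,n]}\rangle$ and appeal to ``standard facts about centralisers in right-angled Coxeter groups'', but the Servatius-type descriptions you have in mind yield the centraliser of a \emph{single} (cyclically reduced) element, not of an arbitrary subgroup; you would still have to intersect those centralisers over a supply of pure diagrams and argue that the intersection collapses. The test element $d((s_{1,3}s_{2,4})^2)=\tau_{1,2,3}\tau_{1,2,4}\tau_{1,3,4}\tau_{2,3,4}$ is fine, but ``its relabellings'' are not freely available: only shifts $(s_{k,k+2}s_{k+1,k+3})^2$ (and a few variants) give new pure cacti, not arbitrary $S_n$-translates, so it is not clear your tests suffice. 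The paper's letter-by-letter argument sidesteps this entirely.

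Your permutation-killing step is in better shape: for $n\ge 5$ the stabilisation of every consecutive triple already forces $\sigma=\Id$, and for $n=4$ your auxiliary cactus $(s_{1,2}s_{2,4})^3$ does eliminate the residual transposition $(2\ 3)$; so that half of the outline can be made rigorous. The trade-off is that you are doing extra work the paper never needs, while leaving the genuinely hard step (the $D_n$-centraliser) unproved.
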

	
	The exceptional case $n=3$ can be treated by hand. We have
	\[J_3 \simeq FC_2 \rtimes \Z_2,\]
	where the free Coxeter subgroup $\langle s_{1,2}, s_{2,3} \rangle \simeq FC_2$ is generated by the $2$-leaf cacti, and the generator $s_{1,3}$ of the $\Z_2$ part acts on the $FC_2$ by permuting $s_{1,2}$ and $s_{2,3}$. Further,
	\[PJ_3 = \langle a:=s_{1,2}s_{2,3}s_{1,2}s_{1,3} \rangle \simeq \Z,\]
	and its centralizer in $J_3$ is 
	\[\mathrm{C}_{J_3} (PJ_3) = \langle b:=s_{1,2}s_{1,3} \rangle \simeq \Z.\]
	Note that $a=b^3$. See Appendix~\ref{A:PJ4} for more detail.
	
	\begin{proof}
		Let $w \in FJ_n$ be a word representing a non-trivial element $c \in J_n$ commuting with every pure cactus. It can be assumed of minimal length among such words.
		
		Any generator $s_{p,q}$ with leaf number $>2$ can be transformed into a pure cactus by attaching some $2$-leaf generators:
		\[\tilde{s}_{p,q} := s_{p,q} s_{p_1,p_1+1}s_{p_2,p_2+1}\cdots,\]
		since neighbouring transpositions generate the symmetric group $S_n$. This can be done in multiple ways; any choice will work for us. The commuting relation $c \tilde{s}_{p,q} = \tilde{s}_{p,q} c$ can be analysed along the lines of the proof of Theorem~\ref{T:Center}. One concludes that all letters in the word $\od(w) \in FD_n$ commute with ${}^{s(c)}\!s_{p,q}$ in $D_n$. Thus all letters in $\od(w)$ are \emph{almost friendly}, that is, commute with all the $\tau_I$ of size $|I| > 2$. 
		
		Let us now prove that $\tau_{1,2,\ldots,n}$ is the only almost friendly generator of $D_n$ when $n>3$. Indeed, given a proper subset $I \varsubsetneq \{ 1,2,\ldots,n \}$ of size $>2$, one can replace one of its elements with another element from $\{ 1,2,\ldots,n \}$, and get a subset $I'$ of size $>2$ such that $\tau_I$ and $\tau_{I'}$ do not commute in $D_n$. For a subset of size~$2$ the argument is similar, except that one replaces an element with two new ones; there is enough place for it in $\{ 1,2,\ldots,n \}$ since $n>3$.
		
		Thus the centralizer of $PJ_n$ can contain only the $d$-preimage $s_{1,n}$ of $\tau_{1,2,\ldots,n}$. But this element does not commute with $s_{1,3}s_{1,2}s_{2,3}s_{1,2} \in PJ_n$, since
		\begin{align*}
		d(s_{1,n}s_{1,3}s_{1,2}s_{2,3}s_{1,2}) &= \tau_{1,2,\ldots,n} \tau_{n-2,n-1,n} \tau_{n-2,n-1}\tau_{n-2,n}\tau_{n-1,n} \; \text{ and }\\
		d(s_{1,3}s_{1,2}s_{2,3}s_{1,2}s_{1,n}) &= \tau_{1,2,3}\tau_{2,3}\tau_{1,3}\tau_{1,2} \tau_{1,2,\ldots,n} =\tau_{1,2,\ldots,n}\tau_{1,2,3}\tau_{2,3}\tau_{1,3}\tau_{1,2}
		\end{align*}
		are distinct in~$D_n$ when $n>3$.
	\end{proof}
	
	\newpage 
	\begin{thm}\label{T:Torsion}
		The cactus group $J_n$ has no odd torsion. 
	\end{thm}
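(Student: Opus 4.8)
The plan is to reduce to prime order and then play the cocycle $d\colon J_n \hookrightarrow D_n$ against the fact, recalled at the start of this section, that the RACG $D_n$ has only $2$-torsion. First note that it suffices to rule out elements of odd \emph{prime} order: if $c$ had odd order $m>1$ and $p$ were an odd prime dividing $m$, then $c^{m/p}$ would have order $p$. So assume $c \in J_n$ has order an odd prime $p$, and set $\sigma = s(c) \in S_n$, an element of order $1$ or $p$.

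I would dispose of the pure case $\sigma=1$ first, as it is clean and self-contained. Here $c \in PJ_n$, and on the pure subgroup $d$ restricts to a genuine group morphism $d\colon PJ_n \to D_n$ (as noted at the end of Section~\ref{S:WordPb}). Thus $d(c)$ has order dividing $p$ in $D_n$; since $p$ is odd and $D_n$ has only $2$-torsion, $d(c)=1$, and injectivity of $d$ forces $c=1$, a contradiction. In particular this shows $PJ_n$ has no odd torsion. For prime $p$ the remaining possibility is therefore $\sigma \neq 1$ of order exactly $p$, and the map $s$ is injective on $\langle c\rangle$.

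The heart of the matter is this non-pure case. Here I would write out the $1$-cocycle identity for $c^p=1$, using that the left action is ${}^{c}(-)={}^{s(c)}(-)={}^{\sigma}(-)$:
\[
d(c^p)=d(c)\cdot {}^{\sigma}d(c)\cdot {}^{\sigma^2}d(c)\cdots {}^{\sigma^{p-1}}d(c)=1 \quad\text{in } D_n .
\]
Writing $u=d(c)$ and choosing an irreducible word for it (Lemma~\ref{L:RACG_normal}), all $p$ factors ${}^{\sigma^k}u$ are irreducible of the same length, since the $S_n$-action $\tau_I\mapsto\tau_{\sigma I}$ permutes the standard generators of $D_n$ and hence is a length-preserving automorphism. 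I would then run the RACG reduction of Lemma~\ref{L:RACG_normal} on this product and track how the letters of the $p$ conjugated copies must pair off and annihilate. Each letter $\tau_{I}$ occurring in $u$ lies among the images of intervals under products of interval-reversals (this is exactly what a cactus reading records), and a letter of $u$ can only annihilate against a letter $\tau_{\sigma^k I'}$ of a later copy, forcing $\sigma^{k}I'=I$. Propagating these matching constraints through the whole product, using the interval geometry of the admissible $\tau_I$ and the odd order of $\sigma$, I would aim to show that the product cannot reduce to the empty word unless $u$ is already empty; by injectivity of $d$ this yields $c=1$.

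The main obstacle is precisely this last cancellation analysis, and it is genuinely unavoidable: the \emph{abstract} equation $u\cdot{}^{\sigma}u\cdots{}^{\sigma^{p-1}}u=1$ does admit nontrivial solutions in $D_n$. For example, if $\sigma$ cyclically permutes three pairwise-disjoint label sets $I_a,I_b,I_c$ and $u=\tau_{I_a}\tau_{I_b}$, then ${}^{\sigma}u=\tau_{I_b}\tau_{I_c}$ and ${}^{\sigma^2}u=\tau_{I_c}\tau_{I_a}$, and the product collapses to $1$ although $u\neq1$. Hence one cannot argue purely inside $D_n\rtimes S_n$; the proof must use that $u$ is a bona fide cactus reading, whose letter-sets arise as $\sigma$-compatible images of nested or disjoint intervals. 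In the cactus world such disjoint configurations never occur (e.g.\ $d(s_{1,2}s_{2,3})=\tau_{1,2}\tau_{1,3}$, whose sets overlap, and indeed $s_{1,2}s_{2,3}$ has infinite order), so the interval structure is incompatible with the free cancellation patterns permitted by an odd-order $\sigma$. Controlling this interaction is the crux; an alternative route to the same crux would be to let $D_n\rtimes S_n$ act on the CAT(0) Davis complex of $D_n$ (with $S_n$ permuting the hyperplanes) and analyse the finite point-stabilisers of the order-$p$ element, but the combinatorial cancellation argument is more in the self-contained spirit of this paper.
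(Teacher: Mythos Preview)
Your reduction to odd primes and your disposal of the pure case via the fact that $d$ is a genuine homomorphism on $PJ_n$ are both fine. The problem is the non-pure case: you correctly identify that the equation $u\cdot{}^{\sigma}u\cdots{}^{\sigma^{p-1}}u=1$ can have nontrivial solutions in $D_n$, and you correctly say that the interval structure of cactus readings must be what rules this out, but you never actually carry out that analysis. The sentence ``in the cactus world such disjoint configurations never occur'' is not true as stated (e.g.\ $d(s_{1,2}s_{3,4})=\tau_{1,2}\tau_{3,4}$ has disjoint label sets), and the appeal to ``interval geometry'' is a hope, not an argument. What remains is exactly the hard part of the theorem.

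The paper closes this gap with two ingredients you are missing. First, it works with a \emph{minimal-length} word $w$ representing a nontrivial $p$-torsion element and argues by contradiction on minimality, rather than trying to analyse an arbitrary $u=d(c)$. Second, it splits on whether the first pair of letters that can be (conjugation-)commuted together and annihilated in $w^p$ sit in the \emph{same} position in two different copies of $w$, or in \emph{different} positions. In the different-position case, one cyclically rotates the letters of $w$ (replacing $c$ by a conjugate) to bring the cancelling pair into a single copy, contradicting minimality. In the same-position case, the $p$ letters of $\od(w^p)$ coming from that position are $\tau,{}^{t}\tau,\ldots,{}^{t^{p-1}}\tau$; two of them coincide, and primality of $p$ forces all $p$ to coincide. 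Since $\od(w^p)$ represents $1$, the letter $\tau$ occurs an even number of times in it; as $p$ is odd, there is at least one further occurrence, hence two occurrences inside a single ${}^{t^k}\od(w)$ that can be commuted and annihilated --- again contradicting minimality of $w$. Your counterexample $u=\tau_{I_a}\tau_{I_b}$ falls into the different-position case and is handled by the rotation trick, not by any claim that disjoint label sets cannot appear.
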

	
	\begin{proof}
		Fix an odd prime $p$. Among non-trivial $p$-torsion elements in $J_n$ (if they exist), choose an element $c$ with the shortest possible representative~$w$. According to Proposition~\ref{P:cactus_normal}, the triviality of $c^p$ implies that in the word $w^p$ a letter $l$ can be (conjugation-)commuted to the right towards a letter $l'$, so that the two get annihilated. 
		
		\begin{description}
			\item[Case 1] The letters $l$ and $l'$ occupy different positions in the $q$th and $q'$th copies of $w$ respectively. We have $q<q'$ since $w$ is irreducible. One can assume $l$ to be in the last position, and $l'$ in the first position (otherwise the letters of $w$ should be (conjugation-)commuted accordingly). Remove the first letter of~$w$ and put it to the end; let $w'$ be the resulting word. It represents a non-trivial $p$-torsion element $c' \in J_n$ (which is a conjugate of~$c$). The word $w'^p$ is obtained from $w^p$ by moving the first letter to the end. In $w'^p$, the letters $l$ and $l'$ can still be (conjugation-)commuted together and annihilated. The letter $l$ remains in the $q$th copy of~$w'$, whereas the letter $l'$ is now in the $(q'-1)$st copy. They are still in different positions in their respective copies. Repeating this argument, one gets a non-trivial $p$-torsion element represented by a word $\tilde{w}$ with an annihilation possibility inside $\tilde{w}$ (case $q=q'$), hence with a representative shorter than~$w$. This contradicts the minimality of~$w$.	
			\item[Case 2] The letters $l$ and $l'$ occupy the same position $i$ in different copies of $w$. Consider the word 
			\[\od(w^p)=\od(w) \ {}^{t}\!\od(w) \ {}^{t^2}\!\od(w) \ldots \ {}^{t^{p-1}}\!\od(w),\]
			where $t=s(c)$. Then the letters of $\od(w^p) \in FD_n$ corresponding to the $p$ copies of the $i$th letter~$l$ from $w$ are $\tau, \ {}^{t}\!\tau, \ {}^{t^2}\!\tau, \ldots, \ {}^{t^{p-1}}\!\tau$. Since $p$ is prime, the permutation $t$ is of order~$p$ or~$1$ (as $t^p=s(c)^p=s(c^p)=s(1)=\Id$). In its orbit containing $\tau$, two elements, corresponding to $l$ and $l'$ in $w^p$, can be commuted together and annihilated, thus coincide. The $p$ letters in $\od(w^p)$ corresponding to~$l$ are thus all identical, and can be commuted all through the word $\od(w^p)$. Since the word $\od(w^p)$ represents the trivial Gauss diagram, it contains an even number of copies of the letter $\tau$, and thus at least one copy different from the $p$ copies mentioned above; here we used that $p$ is odd for the first time in this proof. Thus $\tau$ appears at least twice in one of the words $\od(w), \ {}^{t}\!\od(w), \ {}^{t^2}\!\od(w), \ldots, \ {}^{t^{p-1}}\!\od(w)$, where its two occurrences can be moved together and annihilated. This contradicts the minimality of~$w$. \qedhere
		\end{description} 
	\end{proof}
	
	\begin{thm}\label{T:Torsion_Even}
		The cactus group $J_{2^k}$ has torsion of order~$2^k$. 
	\end{thm}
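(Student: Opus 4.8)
The plan is to exhibit one explicit element of order $2^k$ and to analyse it by induction on $k$. Writing $N=2^k$, I would consider the \emph{nested-reversal cactus}
\[ c_k := s_{1,2^k}\,s_{1,2^{k-1}}\cdots s_{1,4}\,s_{1,2}\in J_{2^k}, \]
the product of the reversals of the intervals $[1,2^j]$ for $j=k,k-1,\dots,1$, and claim it has order exactly $2^k$. Since $s\colon J_{2^k}\to S_{2^k}$ is a group morphism, it suffices to prove two statements, $(\mathrm{I}_k)$: $c_k^{2^k}=1$, and $(\mathrm{II}_k)$: the permutation $s(c_k)$ has order $2^k$. Indeed $(\mathrm{II}_k)$ forces the order of $c_k$ to be a multiple of $2^k$, while $(\mathrm{I}_k)$ forces it to divide $2^k$, so together they pin it to exactly $2^k$. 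For $k=1$ we have $c_1=s_{1,2}$, of order $2$, so both hold.

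The engine of the induction is the identity
\[ c_k^2 = c_{k-1}\cdot {}^{s_{1,2^k}}\!c_{k-1}, \]
where $c_{k-1}=s_{1,2^{k-1}}\cdots s_{1,2}$ is viewed inside $J_{2^k}$ through the inclusion $J_{2^{k-1}}\hookrightarrow J_{2^k}$ of Corollary~\ref{C:CactCact}, and ${}^{g}h:=ghg^{-1}$. This is immediate from $c_k=s_{1,2^k}\,c_{k-1}$ together with $s_{1,2^k}^2=1$, since $c_k^2=(s_{1,2^k}c_{k-1})^2=(s_{1,2^k}c_{k-1}s_{1,2^k})\,c_{k-1}$. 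The structural point to verify is that the two factors commute and have disjoint supports. Using the commutation-conjugation relation~\eqref{E:j3} one computes ${}^{s_{1,2^k}}s_{1,2^j}=s_{2^k+1-2^j,\,2^k}$, so that ${}^{s_{1,2^k}}\!c_{k-1}$ is the mirror copy of $c_{k-1}$, built only from generators $s_{p,q}$ with $[p,q]\subseteq[2^{k-1}+1,2^k]$; as $c_{k-1}$ itself involves only generators with $[p,q]\subseteq[1,2^{k-1}]$, the two interval families are disjoint and relation~\eqref{E:j2} shows the factors commute. Granting this, $(\mathrm{I}_k)$ follows at once: $c_k^{2^k}=(c_k^2)^{2^{k-1}}=c_{k-1}^{2^{k-1}}\bigl({}^{s_{1,2^k}}\!c_{k-1}\bigr)^{2^{k-1}}=1$, using $(\mathrm{I}_{k-1})$ for the first factor and its conjugate for the second.

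For $(\mathrm{II}_k)$ I would apply $s$ to the same identity: $s(c_k)^2=s(c_{k-1})\,s\bigl({}^{s_{1,2^k}}\!c_{k-1}\bigr)$ is a product of two permutations supported on the disjoint halves $[1,2^{k-1}]$ and $[2^{k-1}+1,2^k]$, each conjugate to $s(c_{k-1})$ and hence of order $2^{k-1}$ by $(\mathrm{II}_{k-1})$. Disjointness of supports gives $\mathrm{ord}\,s(c_k)^2=2^{k-1}$, whence $\mathrm{ord}\,s(c_k)=2^k$ (concretely $s(c_k)$ is a $2^k$-cycle, which is forced, as no element of $S_{2^k}$ of order $2^k$ can avoid being a full cycle). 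The only genuinely delicate issue is the \emph{lower} bound on the order---a priori $c_k$ might collapse to a smaller power of $2$---and this is exactly what passing to the symmetric quotient through $s$ resolves cleanly. The remaining work is the bookkeeping with~\eqref{E:j2}--\eqref{E:j3} needed to split $c_k^2$ as a commuting product over the two halves, which I expect to be the main, though entirely routine, computational step.
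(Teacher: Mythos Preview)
Your proof is correct and follows essentially the same approach as the paper's. Your element $c_k$ is the inverse of the paper's $t_k = s_{1,2}\,s_{1,4}\cdots s_{1,2^k}$, and both arguments hinge on the same inductive decomposition of the square into two commuting conjugate copies supported on the disjoint halves $[1,2^{k-1}]$ and $[2^{k-1}+1,2^k]$. The one noteworthy difference concerns the lower bound on the order: the paper argues directly in $J_n$ that the two commuting factors, living on disjoint strand intervals, generate a direct product and hence their product has order exactly $2^{k-1}$; you instead push the computation through the quotient $s\colon J_n\to S_n$, where the lcm formula for permutations with disjoint supports is elementary. Your route makes the lower bound slightly more explicit, at the cost of an extra inductive statement $(\mathrm{II}_k)$; the paper's is terser but leaves the direct-product structure implicit.
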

	
	\begin{proof}
		Consider the cacti defined inductively by
		\begin{align*}
		t_1&=s_{1,2},\\
		t_2&=s_{1,2}s_{1,4},	\hspace*{1cm}	\cdots\\
		t_{k+1}&=t_k s_{1,2^{k+1}}.
		\end{align*}
		The cactus $t_k$ is defined in the group $J_n$ whenever $n \geq 2^k$. Let us prove by induction that $t_k$ is of order~$2^k$. For $k=1$, this is just the idempotence of $s_{1,2}$. To move from $k$ to $k+1$, observe that
		\begin{align*}
		t_{k+1}^2&=(t_k s_{1,2^{k+1}})^2= t_k (s_{1,2^{k+1}}t_k s_{1,2^{k+1}}) = t_k t'_k.
		\end{align*}
		In the word $t'_k:=s_{1,2^{k+1}}t_k s_{1,2^{k+1}}$, the first letter $s_{1,2^{k+1}}$ can be conjugate-commuted all the way to the right and annihilated with the last letter. In the resulting word, the indices of all letters are $>2^k$. Thus the cactus $t_k$ and its conjugate $t'_k$ commute. Since both are of order $2^k$ by assumption, so is their product $t_{k+1}^2$. Hence the order of $t_{k+1}$ is $2^{k+1}$.
	\end{proof}
	
	\begin{thm}\label{T:Torsion_Pure}
		The pure cactus group $PJ_n$ is torsionless. 
	\end{thm}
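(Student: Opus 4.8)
The plan is to combine the torsionlessness-type arguments already developed with the two main structural facts about the pure part: first, that $d$ restricted to $PJ_n$ is a genuine group morphism (not merely a $1$-cocycle), since $s(c)=\Id$ for pure $c$; and second, that Theorem~\ref{T:Torsion} already rules out all odd torsion in $J_n$, hence in $PJ_n$. So it remains only to exclude torsion of order a power of~$2$, and in fact it suffices to exclude torsion of order exactly~$2$, since any $2$-power-torsion element has a power of order~$2$. Thus I would reduce immediately to showing that $PJ_n$ contains no nontrivial involution.

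Suppose then that $c \in PJ_n$ is a nontrivial element with $c^2=1$. Because $c$ is pure, $d(c^2)=d(c)d(c)=d(c)^2$ holds honestly in the RACG $D_n$, so $d(c)$ is a $2$-torsion element (or trivial) of $D_n$. By injectivity of $d$ (the theorem from \cite{Mostovoy_CactResNilpot,Yu_LinGenCactus} recalled in the excerpt), $c$ is nontrivial if and only if $d(c)$ is. By the standard RACG structure theory recalled at the start of Section~\ref{S:Torsion}, every $2$-torsion element of a RACG is conjugate to a product of pairwise commuting generators; so $d(c)$ is conjugate in $D_n$ to some $\tau_{I_1}\cdots\tau_{I_m}$ with the $I_j$ pairwise disjoint-or-nested (the commuting condition \eqref{E:d2}). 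The heart of the matter is to show that the only such elements that can arise as $d(c)$ for a \emph{pure} cactus~$c$ are trivial.

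The key constraint to exploit is that $c$ lies in $PJ_n=\Ker(s)$, i.e. $s(c)=\Id$. I would translate the purity condition through the morphism $s\colon J_n\to S_n$ together with the cocycle/morphism behaviour of $d$, and show that a conjugate of a pairwise-commuting product of $\tau_I$'s being equal to $d(c)$ for a pure $c$ forces the word to collapse. Concretely, I expect to argue on a minimal irreducible representative $w$ of $c$: running the normal-form procedure of Proposition~\ref{P:cactus_normal} on $w^2$, the annihilations witnessing $c^2=1$ are, via Lemmas~\ref{L:d_lift} and~\ref{L:RACG_normal}, exactly the annihilations in $d(c)^2=1$ in $D_n$. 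The purity $s(c)=\Id$ means the $S_n$-twist between the two copies of $w$ in $w^2$ is trivial, so that each letter of $\od(w)$ is paired for annihilation with the correspondingly-positioned letter in the \emph{second} copy of $\od(w)$, i.e.\ with an identical letter rather than a permuted one. This identicalness, combined with minimality of $w$, should force each such annihilating pair to already be annihilatable \emph{within} a single copy of $\od(w)$, contradicting irreducibility unless $w$ is empty.

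The main obstacle I anticipate is making precise the claim that purity forces the annihilating partners in $\od(w^2)$ to be \emph{intra-copy} rather than \emph{inter-copy}. Unlike the odd-torsion proof, where the orbit of the twist permutation had prime length and one could count parity, here the twist is trivial but the pairing between the two copies need not be positionwise naive; I must handle the possibility that a letter in the first copy of $\od(w)$ annihilates with a non-corresponding letter in the second copy. I would control this exactly as in the Case~1 argument of Theorem~\ref{T:Torsion}: cyclically rotating $w$ (replacing $c$ by a conjugate, still pure and still an involution) to decrease the copy-distance between the annihilating pair, until they land in the same copy, yielding an annihilation inside $\od(w)$ and contradicting minimality. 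The delicate point is verifying that purity is preserved under these rotations and that the triviality of the twist genuinely prevents the prime-orbit escape route that was available in the odd case; once that is pinned down, the contradiction closes and $PJ_n$ is torsionless.
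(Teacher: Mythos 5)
Your reduction to elements of order two, and the observation that $d$ restricted to $PJ_n$ is a morphism so that $d(c)$ is an involution in the RACG $D_n$, are fine and agree with the paper's starting point. The gap is exactly at the step you flag as delicate, and it cannot be repaired the way you propose. The hoped-for contradiction --- that minimality plus identicalness forces each annihilating pair in $\od(w)\,\od(w)$ to be annihilatable \emph{inside} a single copy of $\od(w)$ --- is false. If $\od(w)=\tau_{I_1}\cdots\tau_{I_k}$ consists of pairwise commuting, pairwise distinct letters, then $w$ is irreducible, yet $\od(w)^2=1$ in $D_n$ through purely inter-copy annihilations of identically-positioned letters; no intra-copy annihilation exists or is needed, and nothing contradicts irreducibility. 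Such configurations genuinely occur in $J_n$: take $w=s_{1,2}s_{3,4}$, an irreducible word with $w^2=1$; here the twist $s(c)=(12)(34)$ fixes both label sets, so the second copy's letters are literally identical to the first copy's --- precisely the configuration you claim is impossible. If your mechanism worked, it would show $J_n$ itself has no $2$-torsion, contradicting Theorem~\ref{T:Torsion_Even}. Your proposed fix, cyclic rotation as in Case~1 of the proof of Theorem~\ref{T:Torsion}, only applies to annihilating pairs occupying \emph{different} positions within their respective copies; for pairs at the \emph{same} position (which is exactly what survives once Case~1 is eliminated), rotating $w$ keeps the pair at the same relative position in the two copies forever, so they never merge into one copy.

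What is missing is the argument through which purity actually enters. The word-level analysis (Cases 1 and 2 of Theorem~\ref{T:Torsion}, run for $p=2$) yields only that the letters of $\od(w)$ pairwise commute in $D_n$, i.e.\ the corresponding subsets are pairwise disjoint or nested. The paper's decisive final step is then a statement about permutations, not words: any cactus with this property has $s(c)\neq\Id$. One reorders $w$ by (conjugation-)commutation so that leaf numbers never increase, lets $s_{p,q}$ be the first letter, and traces the image of $p$ under the successive letters; the disjoint-or-nested structure forces a retracting ``ping-pong'' trajectory ($p\mapsto q$, then to some $p'>p$, then to some $q'<q$, and so on), so $s(c)$ moves $p$ strictly to the right and is nontrivial. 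This contradicts $c\in PJ_n$. Without this use of $s(c)$ --- something strictly beyond ``the twist is trivial, so paired letters are identical'' --- the proof cannot close, because the words you are trying to rule out do represent elements of $J_n$; they are just never pure.
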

	
	\begin{proof}
		By Theorem~\ref{T:Torsion}, it is sufficient to show that $PJ_n$ has no $2$-torsion. Assume that there is some. Among non-trivial $2$-torsion elements in $PJ_n$, choose an element $c$ with the shortest possible representative~$w$. Following the proof of Theorem~\ref{T:Torsion}, one concludes that $\od(w)$ is a product of pairwise commuting generators. In particular, one can reorganise the word~$w$ by (conjugation-)commutation into a word~$w'$ so that the leaf number of its letters never increases from left to right. Let $s_{p,q}$ be the first letter of~$w$. Viewing $s(c)$ as a permutation on the set $\{1,2,\ldots,n\}$, let us trace what it does to the element $p$. First, $s(s_{p,q})$ sends $p$ to position $q$. The next letter whose associated permutation moves this element has to be of the form $s_{p',q}$ with $p' > p$, due to the pairwise commutativity of the letters of $\od(w')$. The next letter moving this element is $s_{p',q'}$ with $q'< q$, and so on. We observe a retracting ping-pong-like trajectory. Overall, the permutation $s(c)$ moves our element strictly to the right, and thus cannot be trivial. Hence the cactus~$c$ cannot be pure.
	\end{proof}
	
	\appendix
	\section{A one-relator presentation for $PJ_4$}\label{A:PJ4}
	
	The goal of this appendix is to provide explicit group presentations for $PJ_3$ and $PJ_4$.
	First let us state simpler presentations for $J_3$ and $J_4$ which can be easily obtained using Tietze transformations:
	
	\begin{prop}\label{theoreme5}
		The cactus groups $J_3$ and $J_4$ admit the following group presentations:
		\begin{align*}
		J_3 &\simeq \langle s_{1,2}, s_{1,3} \,  |  \, s_{1,2}^2=s_{1,3}^2=1\rangle \cong \Z_2 \ast \Z_2\\
		J_4 &\simeq  \Biggl\langle 
		\begin{array}{l|c}
		& s_{1,2}^2=s_{1,3}^2=s_{1,4}^2=1, \\
		s_{1,2}, s_{1,3}, s_{1,4}  &  s_{1,2} s_{1,4} s_{1,2} s_{1,4}= s_{1,4} s_{1,2} s_{1,4} s_{1,2},  \\
		& s_{1,4} s_{1,3}s_{1,2}s_{1,3}= s_{1,3}s_{1,2}s_{1,3} s_{1,4}                                             
		\end{array}
		\Biggr\rangle.
		\end{align*}	
	\end{prop}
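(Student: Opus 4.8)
The plan is to derive both presentations from the standard presentation of $J_4$ (generators $s_{p,q}$, relations \eqref{E:j1}--\eqref{E:j3}) by a sequence of Tietze transformations, eliminating every generator $s_{p,q}$ with $p\neq 1$. The key observation is that the commutation-conjugation relations \eqref{E:j3} already express each such generator in terms of the $s_{1,q}$: taking $[1,2]\subset[1,3]$, then $[1,2]\subset[1,4]$ and $[1,3]\subset[1,4]$, and using $s_{1,3}^2=s_{1,4}^2=1$, one reads off
\[
s_{2,3}=s_{1,3}s_{1,2}s_{1,3},\qquad s_{3,4}=s_{1,4}s_{1,2}s_{1,4},\qquad s_{2,4}=s_{1,4}s_{1,3}s_{1,4}.
\]
For $J_3$ only the first identity is needed.

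First I would treat $J_3$. After adjoining $s_{2,3}=s_{1,3}s_{1,2}s_{1,3}$ as a defining relation and deleting the generator $s_{2,3}$, one substitutes this expression into the remaining relations. The idempotence $s_{2,3}^2=1$ becomes a consequence of $s_{1,2}^2=s_{1,3}^2=1$, and the only other relation (from $[2,3]\subset[1,3]$) collapses to a tautology. This leaves exactly $\langle s_{1,2},s_{1,3}\mid s_{1,2}^2=s_{1,3}^2=1\rangle\cong\Z_2\ast\Z_2$.

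For $J_4$ I would eliminate $s_{2,3}$, $s_{3,4}$ and $s_{2,4}$ via the three identities above and rewrite every remaining relation in the letters $a=s_{1,2}$, $b=s_{1,3}$, $c=s_{1,4}$. Three things must be checked. The idempotences $s_{2,3}^2=s_{3,4}^2=s_{2,4}^2=1$ all reduce to $a^2=b^2=c^2=1$. The disjointness relation \eqref{E:j2} for $[1,2]\cap[3,4]=\emptyset$, namely $s_{1,2}s_{3,4}=s_{3,4}s_{1,2}$, becomes the first listed relation $acac=caca$; the relation from $[2,3]\subset[1,4]$, where \eqref{E:j3} is the pure commutation $s_{1,4}s_{2,3}=s_{2,3}s_{1,4}$, becomes the second listed relation $cbab=babc$. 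Every other instance of \eqref{E:j3} either restates one of the three defining identities or, after substitution, reduces to a tautology modulo idempotence.

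The only genuinely nontrivial verifications are the two relations internal to $[2,4]$, namely $s_{2,4}s_{2,3}=s_{3,4}s_{2,4}$ and $s_{2,4}s_{3,4}=s_{2,3}s_{2,4}$, since these involve all three eliminated generators simultaneously; I expect this to be the main (though still short) obstacle. For the first, substitution gives $cbcbab$ on the left and $cabc$ on the right, and applying $cbab=babc$ to the last four letters of $cbcbab$ and cancelling $bb=1$ turns the left-hand side into $cabc$. The second relation is handled symmetrically, again using only $cbab=babc$ and idempotence. Once these two identities are recognised as consequences of the two retained relations, the Tietze procedure terminates with the stated presentation of $J_4$.
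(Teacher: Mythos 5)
Your proposal is correct and follows exactly the route the paper indicates: the paper states that these presentations "can be easily obtained using Tietze transformations," and your elimination of $s_{2,3}=s_{1,3}s_{1,2}s_{1,3}$, $s_{3,4}=s_{1,4}s_{1,2}s_{1,4}$, $s_{2,4}=s_{1,4}s_{1,3}s_{1,4}$ carries this out in full, with the surviving relations ($acac=caca$ from disjointness and $cbab=babc$ from $[2,3]\subset[1,4]$) and the two nontrivial internal-$[2,4]$ checks all verified correctly. Nothing is missing; your write-up simply makes explicit what the paper leaves to the reader.
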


Similar presentations can be produced for general~$n$.
	
	\begin{cor}\label{C:PJ3}
		The pure cactus group $PJ_3$ admits the following group presentation:
		\[PJ_3= \langle  (s_{1,2} s_{1,3})^3 \rangle \simeq \Z.\]
	\end{cor}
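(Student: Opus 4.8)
The plan is to leverage the presentation $J_3 \simeq \langle s_{1,2}, s_{1,3} \mid s_{1,2}^2 = s_{1,3}^2 = 1\rangle \cong \Z_2 \ast \Z_2$ from Proposition~\ref{theoreme5}, and to recover $PJ_3 = \Ker(s \colon J_3 \to S_3)$ by a direct coset analysis. The group $\Z_2 \ast \Z_2$ is the infinite dihedral group: writing $a = s_{1,2}$, $b = s_{1,3}$ and $t = ab$, one has $J_3 = \langle t \rangle \rtimes \langle a \rangle$, where $\langle t \rangle \cong \Z$ is the index-two ``rotation'' subgroup consisting exactly of the even-length reduced words in $a$ and $b$, while the ``reflections'' are the odd-length words $t^k a$.

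First I would compute the images of the generators under the strand-permutation morphism $s$. From its definition, $s_{1,2}$ reverses the block $\{1,2\}$ and $s_{1,3}$ reverses $\{1,2,3\}$, so $s(a) = (1\,2)$ and $s(b) = (1\,3)$ in $S_3$. These two transpositions generate $S_3$, hence $s$ is surjective and $PJ_3$ has index $6$ in $J_3$.

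The key step is to locate the kernel inside the dihedral structure. Since both $s(a)$ and $s(b)$ are odd permutations, a reduced word of length $\ell$ in $a,b$ maps under $s$ to a permutation of sign $(-1)^\ell$; in particular every reflection $t^k a$ has odd image and so cannot lie in $\Ker(s)$. Thus $PJ_3 \subseteq \langle t \rangle$. On $\langle t \rangle$ we have $s(t^k) = s(t)^k$, and $s(t) = s(a)s(b)$ is a product of two transpositions sharing the point $1$, hence a $3$-cycle of order~$3$; therefore $s(t^k) = \Id$ precisely when $3 \mid k$. Consequently $PJ_3 = \langle t^3 \rangle = \langle (s_{1,2}s_{1,3})^3 \rangle$, and since $t$ has infinite order this is infinite cyclic, $PJ_3 \cong \Z$.

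The argument is essentially routine once $J_3$ is identified with the infinite dihedral group; I expect no serious obstacle beyond two bookkeeping points. The first is the parity computation that forces the kernel into the rotation subgroup $\langle t \rangle$. The second is the index count ($\langle t \rangle$ has index $2$ in $J_3$ and $PJ_3$ has index $6$, so $PJ_3$ has index $3$ in $\langle t \rangle$), which independently confirms the exponent $3$ appearing in $(s_{1,2}s_{1,3})^3$. One should only take care to fix a consistent composition convention for permutations, so that $s(t)$ is correctly seen to have order~$3$.
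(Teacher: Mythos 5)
Your proof is correct, but it follows a genuinely different route from the paper's. Both arguments start from Proposition~\ref{theoreme5}, i.e.\ $J_3 \simeq \langle s_{1,2}, s_{1,3} \mid s_{1,2}^2 = s_{1,3}^2 = 1\rangle \cong \Z_2 \ast \Z_2$, but the paper then argues by comparing \emph{presentations}: it applies a Tietze transformation to the standard Coxeter presentation of $S_3$, replacing $s_2$ by $s_2' = s_1 s_2 s_1$, so that $S_3$ is presented on exactly the images $s(s_{1,2}) = s_1 = (1\,2)$ and $s(s_{1,3}) = s_2' = (1\,3)$ with the single extra relation $(s_1 s_2')^3 = 1$. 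The morphism $s$ thus becomes the quotient map imposing that one relator, so $\Ker(s)$ is the normal closure of $(s_{1,2}s_{1,3})^3$ in $\Z_2 \ast \Z_2$, which the paper asserts is freely generated by that element. You instead compute the kernel directly inside the infinite dihedral group, using the decomposition $J_3 = \langle t \rangle \rtimes \langle a \rangle$ with $t = s_{1,2}s_{1,3}$, the parity argument forcing $\Ker(s) \subseteq \langle t \rangle$, and the fact that $s(t)$ is a $3$-cycle. Your version is more elementary and self-contained (no re-presentation of $S_3$ is needed), and it makes explicit a point the paper leaves implicit, namely why the normal closure of $t^3$ collapses to the cyclic group $\langle t^3 \rangle$: in your setup this is automatic because the kernel sits inside $\langle t \rangle$, whereas the paper's phrasing silently uses $a t^3 a^{-1} = t^{-3}$. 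What the paper's presentation-theoretic approach buys is a template that scales: it is the same style of argument (identify the kernel of a map between presented groups) that the appendix carries out for $PJ_4$ via Reidemeister--Schreier, where a hands-on dihedral-type analysis is no longer available. Your index cross-check ($[J_3 : \langle t\rangle]\cdot[\langle t\rangle : \Ker(s)] = 2 \cdot 3 = 6 = |S_3|$) is a nice independent confirmation absent from the paper.
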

	
	\begin{proof}
		The cactus group $J_3$ is the RACG  $\Z_2 \ast \Z_2$ generated by $s_{1,2}$ and $s_{1,3}$, and the symmetric group $S_3$ admits the presentation $S_3 \simeq \langle  s_1, s_2  \mid s_1^2=s_2^2=1, (s_1 s_2)^3=1 \rangle$, which, when $s_2$ is replaced with $s'_2=s_1s_2 s_1$, becomes $S_3 \simeq \langle  s_1, s'_2  \mid s_1^2=(s'_2)^2=1, (s_1 s'_2)^3=1 \rangle$. Since $s(s_{1,2})=s_1$ and $s(s_{1,3})=s_1s_2 s_1 = s'_2$, the kernel of $s$ is freely generated by $(s_{1,2} s_{1,3})^3$.
	\end{proof}

Note that the generator of $PJ_3$ above can be rewritten in a shorter form:
\[a:=(s_{1,2} s_{1,3})^3=s_{1,2} (s_{1,3}s_{1,2} s_{1,3})s_{1,2} s_{1,3}=s_{1,2} s_{2,3}s_{1,2} s_{1,3}.\]
	
\begin{thm}\label{theoreme11}
	The pure cactus group $PJ_4$ admits the following group presentation:
	\[PJ_4=\langle \alpha, \ \beta, \ \gamma, \ \delta, \ \epsilon \ | \ 
\alpha \gamma \epsilon \beta \epsilon \alpha^{-1} \delta^{-1} \beta \gamma  \delta^{-1}=1
\rangle,\]
\begin{align*}
\text{where } 
\alpha &= (s_{1,3} s_{1,2})^3,\\
\beta&= s_{1,2} s_{1,3} s_{1,4} s_{1,3} s_{1,4} s_{1,2} s_{1,4} = s_{1,3} s_{1,4} s_{1,3} (s_{1,2}s_{1,4})^2,\\
\gamma&= s_{1,2} s_{1,4} s_{1,2} (s_{1,3}s_{1,4})^2,\\
\delta&= s_{1,3} (s_{1,2} s_{1,4})^2 s_{1,3} s_{1,4} ,\\
\epsilon&= (s_{1,4} s_{1,2} s_{1,3} s_{1,2})^2.
\end{align*}
\end{thm}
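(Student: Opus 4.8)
The plan is to obtain the presentation by applying the Reidemeister--Schreier rewriting process to the presentation of $J_4$ from Proposition~\ref{theoreme5}, exploiting the description of $PJ_4$ as the kernel of $s \colon J_4 \to S_4$. First I would record that this map is onto: the images $s(s_{1,2})=(1\,2)$, $s(s_{1,3})=(1\,3)$ and $s(s_{1,4})=(1\,4)(2\,3)$ generate all of $S_4$ (for instance $(1\,4)(2\,3)$ conjugates $(1\,2)$ into $(3\,4)$, and $(1\,2),(2\,3),(3\,4)$ are the adjacent transpositions). Hence $PJ_4$ has index $|S_4|=24$ in $J_4$, and I would fix a Schreier transversal $T$ indexed by the elements of $S_4$, chosen prefix-closed in the three generators so that the rewriting is well behaved.

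Next I would run Reidemeister--Schreier on the five defining relators of $J_4$ (the three idempotence relators, the commutation relator expressing $[s_{1,2},s_{3,4}]$, and the commutation--conjugation relator). This produces $24\cdot 3 - 23 = 49$ Schreier generators $\gamma_{t,x}$ together with $24\times 5=120$ relators, one for each pair of a transversal element and a defining relator. The idempotence relators $x^2$ identify the $\gamma_{t,x}$ in inverse pairs and trivialise those with $tx\in T$, which collapses the generating set dramatically; the two long relators then contribute the substantive relations among the survivors.

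The bulk of the argument, and the step I expect to be the \emph{main obstacle}, is the Tietze simplification: eliminating the many generators that the relators express in terms of others, and showing that all but one of the resulting relators are consequences of a single one, so as to arrive at a balanced presentation with $5$ generators and $1$ relator. This is precisely where the GAP assistance acknowledged in the paper is essential; I would use it to carry out and certify the reduction, and then reverse-engineer the five surviving free generators as explicit words in $s_{1,2},s_{1,3},s_{1,4}$, matching them to the cacti $\alpha,\beta,\gamma,\delta,\epsilon$ displayed in the statement (here the simplified form $\alpha=(s_{1,3}s_{1,2})^3$, already met in Corollary~\ref{C:PJ3}, anchors the bookkeeping).

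Finally, as an independent check that also pins down the shape of the answer, I would verify directly that each of $\alpha,\dots,\epsilon$ lies in $PJ_4$, by computing that $s$ sends each word to the identity permutation, and that the single relator $\alpha\gamma\epsilon\beta\epsilon\alpha^{-1}\delta^{-1}\beta\gamma\delta^{-1}$ holds in $J_4$ --- a finite word computation using the cactus relations, or equivalently by pushing the word through the injective cocycle $d$ into the RACG $D_4$ and applying the normal-form procedure of Proposition~\ref{P:cactus_normal}. Abelianising the relator yields $2(\beta+\gamma-\delta+\epsilon)=0$, so that $H_1(PJ_4)\cong \Z^4\oplus\Z_2$; this is consistent with $PJ_4$ being the fundamental group of the non-orientable surface $\overline{\mathcal{M}}_{0,5}(\R)$ of genus $5$, and it corroborates the torsionlessness established in Theorem~\ref{T:Torsion_Pure}.
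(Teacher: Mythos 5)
Your proposal follows essentially the same route as the paper's own proof: the paper likewise applies the Reidemeister--Schreier method to the three-generator presentation of $J_4$ from Proposition~\ref{theoreme5}, using a prefix-closed transversal for the $24$ cosets of $PJ_4$, and then performs the Tietze reduction down to five generators and a single relator (your counts of $49$ Schreier generators and $120$ rewritten relators, and your abelianisation check $H_1(PJ_4)\simeq \Z^4\oplus\Z_2$, are all consistent with it). The only real difference is the division of labour: the paper executes the rewriting and elimination explicitly by hand --- listing the $26$ surviving Schreier generators $a_{k,x}$, all nontrivial relators $\tau(k r k^{-1})$, and the resulting identifications --- with GAP used only to verify the computation, whereas you delegate that decisive computation to GAP outright.
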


We will derive this presentation by hand, using the Reidemeister--Schreier method. Our computations were verified in GAP by Neha Nanda and John Guaschi.

The group $PJ_4$ is thus a one-relator group, where the relation is not a power.
Applying Theorem 4.12 of \cite{CombGroupTheory}, we then obtain another proof of the absence of torsion in $PJ_4$. One can also derive several other nice properties of $PJ_4$, using the classical theory of one-relator groups (see for instance \cite{CombGroupTheory,CombGroupTheoryLS,OneRelatorPutman} and references therein): $PJ_4$ is locally indicable and of cohomological dimension $\leq 2$; it has algorithmically decidable word problem; it satisfies the Tits alternative: every its subgroup is either solvable or contains a free group of rank $2$; its presentation $2$-complex is aspherical.

Note that in our presentation of $PJ_4$, we used the generating set $\{s_{1,2}, s_{1,3}, s_{1,4}\}$ of $J_4$. One obtains shorter and more manageable expressions by including the generators $s_{i,j}$ with $i>1$:
\begin{align*}
\alpha &= (s_{1,3} s_{1,2})^3= s_{1,3} s_{1,2} s_{2,3}s_{1,2},&
\beta& =  s_{1,4} s_{2,4} s_{1,3} s_{1,2} s_{3,4},\\
\gamma&= s_{1,2} s_{3,4} s_{2,4} s_{1,3}  s_{1,4},&
\delta&= s_{1,3} s_{1,2} s_{3,4}  s_{1,3} s_{1,4} ,\\
\epsilon&=  s_{3,4} s_{2,3} s_{2,4} s_{1,2} s_{2,3} s_{1,3}.&&
\end{align*}
In particular, $\alpha$ is the inverse of the (image of the) generator $a$ of $PJ_3$ from Corollary~\ref{C:PJ3}. Also, some generators can be replaced with shorter and/or more meaningful ones:
\begin{enumerate}
	\item $\epsilon \leadsto \zeta= \epsilon \alpha^{-1}=s_{2,4} s_{2,3} s_{3,4} s_{2,3}$, which is the generator $\alpha$ ``shifted'' to the right (in other words, the inclusion of $J_3$ into $J_4$ given by $s_{p,q} \mapsto s_{p+1,q+1}$ sends $a^{-1}$ to $\zeta$);
	\item $\gamma \leadsto \eta= \beta \gamma = (s_{1,3}  s_{2,4})^2$, which is the commutator of $s_{1,3}$ and $s_{2,4}$;
	\item $\delta \leadsto \theta = \alpha^{-1} \delta =s_{1,2}s_{2,3}s_{1,2}s_{1,3}s_{1,3} s_{1,2} s_{3,4} s_{1,3} s_{1,4} = s_{1,2} s_{2,3} s_{3,4} s_{1,3} s_{1,4}$;
	\item 
	\begin{align*}
\beta \leadsto \kappa = \theta \eta^{-1} \beta &=s_{1,2} s_{2,3} s_{3,4} s_{1,3} s_{1,4} \cdot s_{2,4}s_{1,3} s_{2,4}s_{1,3} \cdot s_{1,4} s_{2,4} s_{1,3} s_{1,2} s_{3,4} \\
&=	s_{1,2} s_{2,3} s_{3,4}s_{1,4} s_{2,4} \cdot s_{2,4}s_{1,3} s_{2,4}s_{1,3} \cdot  s_{1,3} s_{2,4} s_{1,4} s_{1,2} s_{3,4} \\
&=	s_{1,2} s_{2,3} s_{3,4}s_{1,4} s_{1,3} s_{1,4} s_{1,2} s_{3,4} \\
&=	s_{1,2} s_{2,3} s_{3,4} s_{2,4} s_{3,4} s_{1,2} \\
&=	s_{1,2} \cdot s_{2,3} s_{3,4} s_{2,3} s_{2,4} \cdot s_{1,2},
	\end{align*}
 which is $\zeta^{-1}$ conjugated by $s_{1,2}$.	
\end{enumerate}
The generators $\alpha, \zeta, \kappa, \theta$ and $\eta$ are depicted in Fig.~\ref{Pic:PJ4_gen}.
\begin{figure}[h]
	\begin{center}
	\begin{tikzpicture}[line cap=round,line join=round,x=1cm,y=.6cm,rounded corners=2pt,line width=1.5pt]
\draw (0.,0.)-- (0.3,0.)-- (1.7,2.)-- (2.,2.)-- (4.,0.)-- (5.,0.);
\draw (0.,2.)-- (0.3,2.)-- (1.7,0.)--(2.,0.)-- (3.,0.)-- (5.,2.);
\draw (0.,1.)-- (2.,1.)-- (3.,2.)-- (4.,2.)-- (5.,1.);
\draw (0.,-1.)-- (5.,-1.);
\draw (-1,0.5) node {$\alpha=$};
	\end{tikzpicture}\hspace*{1cm} 
	\begin{tikzpicture}[line cap=round,line join=round,x=1cm,y=.6cm,rounded corners=2pt,line width=1.5pt]
\draw (0.,0.)-- (0.3,0.)-- (1.7,2.)-- (2.,2.)-- (4.,0.)-- (5.,0.);
\draw (0.,2.)-- (0.3,2.)-- (1.7,0.)--(2.,0.)-- (3.,0.)-- (5.,2.);
\draw (0.,1.)-- (2.,1.)-- (3.,2.)-- (4.,2.)-- (5.,1.);
\draw (0.,3.)-- (5.,3.);
\draw (-1,1.5) node {$\zeta=$};
\end{tikzpicture}

\bigskip\bigskip
	\begin{tikzpicture}[line cap=round,line join=round,x=1cm,y=.6cm,rounded corners=2pt,line width=1.5pt]
\draw (1.,3.)--(2.,2.)-- (4.,0.)-- (5.3,0.)-- (6.7,2.)-- (7.4,3.);
\draw (1.,0.)-- (3.,0.)-- (5.,2.)-- (5.3,2.)-- (6.7,0.)-- (7.4,0.);
\draw (1.,1.)--(2.,1.)-- (3.,2.)-- (4.,2.)-- (5.,1.)-- (7.4,1.);
\draw (1.,2.)--(2.,3.)-- (6.4,3.)-- (7.4,2.);
\draw (0,1.5) node {$\kappa=$};
\end{tikzpicture}\hspace*{.7cm} 
	\begin{tikzpicture}[line cap=round,line join=round,x=1cm,y=.6cm,rounded corners=2pt,line width=1.5pt]
\draw (0.3,2.)-- (3.3,-1.)-- (5.3,-1.)-- (6.7,2.)-- (7,2.);
\draw (0.3,1.)-- (1.3,2.)-- (3.3,2.)--(4.7,0.)-- (5.3,0.)-- (6.7,1.)-- (7,1.);
\draw (0.3,0.)-- (1.3,0.)-- (2.3,1.)-- (5.3,1.)-- (6.7,0.)-- (7,0.);
\draw (0.3,-1.)-- (2.3,-1.)-- (3.3,0.)--(4.7,2.)-- (5.3,2.)-- (6.7,-1.)-- (7,-1.);
\draw (-0.7,0.5) node {$\theta=$};
\end{tikzpicture}

\bigskip\bigskip
\begin{tikzpicture}[line cap=round,line join=round,x=1cm,y=.6cm,rounded corners=2pt,line width=1.5pt]
\draw (0.,0.)-- (0.3,0.)-- (1.7,2.)-- (4.3,2.)-- (5.7,0.)-- (8,0.);
\draw (0.,2.)-- (0.3,2.)-- (1.7,0.)--(4.3,0.)-- (5.7,2.)-- (8.,2.);
\draw (0.,1.)-- (2.3,1.)-- (3.7,-1.)-- (6.3,-1.)-- (7.7,1.)-- (8,1.);
\draw (0.,-1.)--(2.3,-1.)-- (3.7,1.)-- (6.3,1.)-- (7.7,-1.)-- (8.,-1.);
\draw (-1,0.5) node {$\eta=$};
\end{tikzpicture}			
		\caption{Generators of the group $PJ_4$}\label{Pic:PJ4_gen}
	\end{center}
\end{figure}
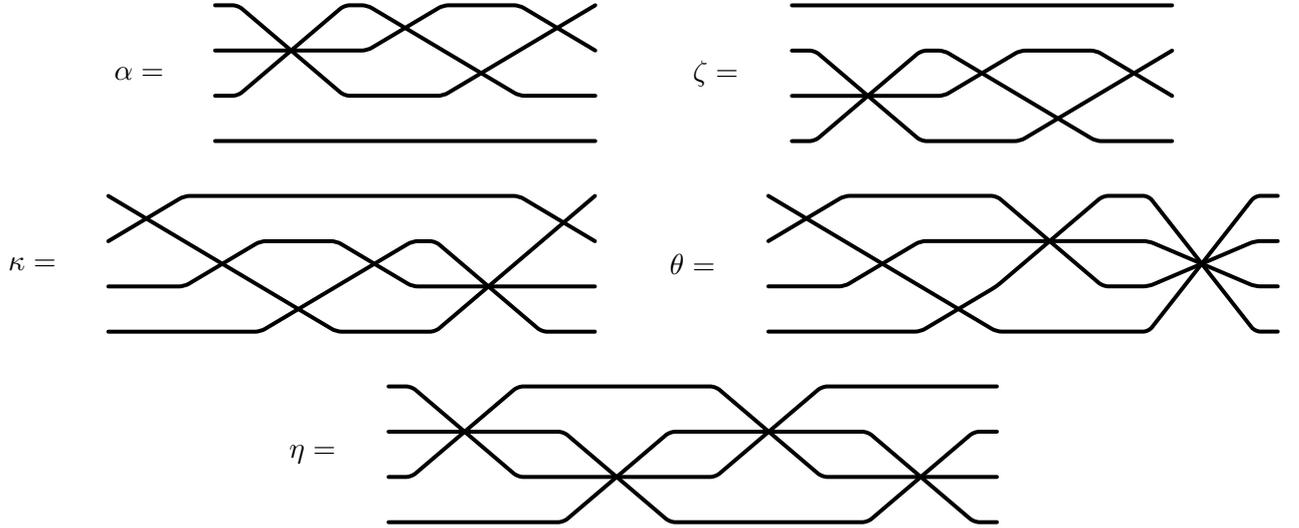

Observe that the squares $\alpha^2, \zeta^2$ and $\kappa^2$ can be rewritten using $2$-leaf generators only, and yield $3$ out of the $7$ free generators of the \emph{pure twin group} (also called the \emph{planar pure braid group}) $PTw_4$ from \cite{Mostovoy_PureTwin}.

\begin{rem}
	In \cite{Devadoss_MosaicOp}, the group $PJ_4$ was given a topological interpretation. It is the fundamental group of the connected sum of five real projective planes. This yields its one-relator presentation of the following form: 
		\[PJ_4=\langle \ \alpha_1, \ \alpha_2, \ \alpha_3, \ \alpha_4, \ \alpha_5 \ | \ 
	\alpha_1^2 \alpha_2^2 \alpha_3^2 \alpha_4^2 \alpha_5^2=1
	\ \rangle.\]
	However, it seems difficult to find explicit expressions of the generators $\alpha_k$ in terms of the generators $s_{i,j}$ of the whole cactus group $J_4$.
\end{rem}

	\begin{proof}[Proof of Theorem~\ref{theoreme11}]
	Let us first recall the Reidemeister--Schreier method, in order to fix notations.
			
	Let  $G$ be a group with presentation $G=\langle \, X \mid R \, \rangle$, where  $X=\{x_1, \ldots , x_p\}$ is the set of generators and  $R=\{r_1,\ldots, r_q\}$ is the set of relations. Let $F(X)$ be the free group on $X$. A \emph{Schreier transversal} of a subgroup $H$ in $G$ is a set $T$ of reduced words in the generators $\{x_1,\ldots,x_p\}$ containing exactly one representative of every right coset of $H$, and together with each word containing all its prefixes. Any subgroup of $G$ admits a Schreier transversal \cite{CombGroupTheory}.
	
	Fix a Schreier transversal $T$ of $H$. Denote by  $^-$ the map $F(X) \to T$ sending $w$ to its representative $\overline{w} \in T$. For any $k\in T$ and $x_i \in X$, put \[a_{k,x_i}=(k x_i)(\overline{k x_i})^{-1}.\]  According to Theorem 2.9 of \cite{CombGroupTheory}, $H$ admits a group presentation having as generators all the non-trivial $a_{k,x_i}$. A system of relations is constructed as follows. Let $w = x_{i_1}^{\varepsilon_1} x_{i_1}^{\varepsilon_2} \ldots
	x_{i_m}^{\varepsilon_{m}}$, where
	$\varepsilon_l = \pm 1\text{ and } 
	x_{i_l} \in X$ for $l=1, \ldots, m$. The \emph{rewriting function} is defined as
	\[\tau(w) = a_{k_{i_1},x_{i_1}}^{\varepsilon_1} \,  a_{k_{i_2},x_{i_2}}^{\varepsilon_2}
	\ldots a_{k_{i_m},x_{i_m}}^{\varepsilon_{m}},\]
	\[ \text{where } k_{i_j}= \begin{cases}
	\overline{x_{i_1}^{\varepsilon_1} \cdots x_{i_{j-1}}^{\varepsilon_{j-1}}} & \text{ if } \varepsilon_j = 1,\\
	\overline{x_{i_1}^{\varepsilon_1} \cdots x_{i_{j}}^{\varepsilon_{j}}} & \text{ if } \varepsilon_j = -1.
	\end{cases} \]
	A complete set of relations for $H$ is given by $ \{\tau(k r_j k^{-1})\ | \ 1\leq j \leq q, \ k \in T\}$.
	
We now turn to our concrete subgroup $H=PJ_4$ of $G=J_4$. Fix the following Schreier transversal:
		\[K=\left\{\begin{array}{lllllll}
		k_1= 1, \ k_2= s_{1,2}, \ k_3 = s_{1,3}, \ k_4 = s_{1,4}, \ k_5 = s_{1,2}s_{1,3}, \ k_6 = s_{1,2}s_{1,4}, \ k_7=s_{1,3}s_{1,2}, \\
		k_8=s_{1,3}s_{1,4}, \ k_9= s_{1,4}s_{1,2}, \ k_{10}=s_{1,4}s_{1,3}, \ k_{11}=s_{1,2}s_{1,3}s_{1,2}, \ k_{12}=s_{1,2}s_{1,3}s_{1,4}, \\ 
		k_{13}=s_{1,2}s_{1,4}s_{1,2},\ k_{14}=s_{1,2}s_{1,4}s_{1,3}, \ k_{15}=s_{1,3}s_{1,2}s_{1,4}, \ k_{16}=s_{1,3}s_{1,4}s_{1,2}, \\
		k_{17}=s_{1,3}s_{1,4}s_{1,3}, \ k_{18}=s_{1,4}s_{1,2}s_{1,3}, \ k_{19}=s_{1,4}s_{1,2}s_{1,4}, \ k_{20}= s_{1,4}s_{1,3}s_{1,2}, \\ 
		k_{21}= s_{1,4}s_{1,3}s_{1,4}, \ k_{22}=s_{1,2}s_{1,3}s_{1,2}s_{1,4}, \ k_{23}= s_{1,2}s_{1,3}s_{1,4}s_{1,2}, \ k_{24}= s_{1,2}s_{1,4}s_{1,3}s_{1,2}
		\end{array}\right\}.\]
		
		The non-trivial generators of $PJ_4$ are:
		
		\noindent $\begin{array}{lllll}
		a_{k_7,s_{1,3}} &=&  s_{1,3}s_{1,2}s_{1,3}s_{1,2}s_{1,3}s_{1,2}\\
		\end{array} $\\  
		$\begin{array}{llll}
		a_{k_{11},s_{1,3}}&=& s_{1,2}s_{1,3}s_{1,2}s_{1,3}s_{1,2}s_{1,3}\\
		\end{array} $\\ 
		$\begin{array}{llllll}
		a_{k_{12},s_{1,3}}&=&s_{1,2}s_{1,3}s_{1,4}s_{1,3}s_{1,4}s_{1,2}s_{1,4}\\
		\end{array} $\\
		$\begin{array}{llllll}
		a_{k_{13},s_{1,3}}&=&s_{1,2}s_{1,4}s_{1,2}s_{1,3}s_{1,4}s_{1,3}s_{1,4}\\
		\end{array} $\\
		$\begin{array}{llllll}
		a_{k_{13},s_{1,4}}&=&s_{1,2}s_{1,4}s_{1,2}s_{1,4}s_{1,3}s_{1,4}s_{1,3}\\
		\end{array} $\\ 
		$\begin{array}{lllllll}
		a_{k_{14},s_{1,4}}&=&s_{1,2}s_{1,4}s_{1,3}s_{1,4}s_{1,3}s_{1,2}s_{1,4}\\
		\end{array} $\\
		$\begin{array}{lllll}
		a_{k_{15},s_{1,2}}&=&s_{1,3}s_{1,2}s_{1,4}s_{1,2}s_{1,4}s_{1,3}s_{1,4} \\
		\end{array} $\\
		$\begin{array}{lllllll}
		a_{k_{15},s_{1,3}}&=&s_{1,3}s_{1,2}s_{1,4}s_{1,3}s_{1,2}s_{1,3}s_{1,4}s_{1,2} \\
		\end{array} $\\ 
		$\begin{array}{llllllll}
		a_{k_{16},s_{1,3}}&=&s_{1,3}s_{1,4}s_{1,2}s_{1,3}s_{1,2}s_{1,4}s_{1,3}s_{1,2}\\
		\end{array} $\\
		$\begin{array}{lllllll}
		a_{k_{16},s_{1,4}}&=&s_{1,3}s_{1,4}s_{1,2}s_{1,4}s_{1,2}s_{1,3}s_{1,4}\\
		\end{array} $\\
		$\begin{array}{llllllll}
		a_{k_{17},s_{1,2}}&=&s_{1,3}s_{1,4}s_{1,3}s_{1,2}s_{1,4}s_{1,2}s_{1,4}\\
		\end{array} $\\ 
		$\begin{array}{lllllll}
		a_{k_{17},s_{1,4}}&=&s_{1,3}s_{1,4}s_{1,3}s_{1,4}s_{1,2}s_{1,4}s_{1,2}\\
		\end{array} $\\
		$\begin{array}{lllll}
		a_{k_{18},s_{1,2}}&=&s_{1,4}s_{1,2}s_{1,3}s_{1,2}s_{1,4}s_{1,2}s_{1,3}s_{1,2}\\
		\end{array} $\\
		$\begin{array}{lllllll}
		a_{k_{18},s_{1,4}}&=&s_{1,4}s_{1,2}s_{1,3}s_{1,4}s_{1,3}s_{1,4}s_{1,2}\\
		\end{array} $\\
		$\begin{array}{lllllll}
		a_{k_{19},s_{1,2}}&=&s_{1,4}s_{1,2}s_{1,4}s_{1,2}s_{1,3}s_{1,4}s_{1,3}\\
		\end{array} $\\
		$\begin{array}{lllllll}
		a_{k_{19},s_{1,3}}&=&s_{1,4}s_{1,2}s_{1,4}s_{1,3}s_{1,4}s_{1,3}s_{1,2}\\
		\end{array} $\\ 
		$\begin{array}{llllllll}
		a_{k_{20},s_{1,3}}&=&s_{1,4}s_{1,3}s_{1,2}s_{1,3}s_{1,4}s_{1,2}s_{1,3}s_{1,2}\\
		\end{array} $\\
		$\begin{array}{lllll}
		a_{k_{20},s_{1,4}}&=&s_{1,4}s_{1,3}s_{1,2}s_{1,4}s_{1,2}s_{1,4}s_{1,3}\\
		\end{array} $\\
		$\begin{array}{llllll}
		a_{k_{21},s_{1,2}}&=&s_{1,4}s_{1,3}s_{1,4}s_{1,2}s_{1,4}s_{1,2}s_{1,3}\\
		\end{array} $\\
		$\begin{array}{lllllllll}
		a_{k_{21},s_{1,3}}&=&s_{1,4}s_{1,3}s_{1,4}s_{1,3}s_{1,2}s_{1,4}s_{1,2}\\
		\end{array} $\\ 
		$\begin{array}{lllllll}
		a_{k_{22},s_{1,2}}&=&s_{1,2}s_{1,3}s_{1,2}s_{1,4}s_{1,2}s_{1,3}s_{1,2}
		s_{1,4}\\
		\end{array} $\\
		$\begin{array}{lllllll}
		a_{k_{22},s_{1,3}}&=&s_{1,2}s_{1,3}s_{1,2}s_{1,4}s_{1,3}s_{1,2}s_{1,3}
		s_{1,4}\\
		\end{array} $\\
		$\begin{array}{lllllll}
		a_{k_{23},s_{1,3}}&=&s_{1,2}s_{1,3}s_{1,4}s_{1,2}s_{1,3}s_{1,2}
		s_{1,4}s_{1,3}\\
		\end{array} $\\
		$\begin{array}{llllllll}
		a_{k_{23},s_{1,4}}&=&s_{1,2}s_{1,3}s_{1,4}s_{1,2}s_{1,4}s_{1,2}s_{1,3}s_{1,4}s_{1,2}\\
		\end{array} $\\
		$\begin{array}{lllllll}
		a_{k_{24},s_{1,3}}&=&s_{1,2}s_{1,4}s_{1,3}s_{1,2}s_{1,3}s_{1,4}s_{1,2}
		s_{1,3}\\
		\end{array} $\\
		$\begin{array}{llllllll}
		a_{k_{24},s_{1,4}}&=&s_{1,2}s_{1,4}s_{1,3}s_{1,2}s_{1,4}s_{1,2}s_{1,4}s_{1,3}s_{1,2}\\
		\end{array} $\\
		
		We detail only non-trivial relations of type  \textit{$\tau(kRk^{-1})$}:
		

		$\begin{array}{llllllll}
		\tau(k_1(s_{1,2}s_{1,3}s_{1,4}s_{1,3})^2 k_1^{-1})=\tau(s_{1,2}s_{1,3}s_{1,4}s_{1,3}s_{1,2}s_{1,3}s_{1,4}s_{1,3})\\
		=a_{k_1,s_{1,2}}a_{k_2,s_{1,3}}a_{k_5,s_{1,4}}a_{k_{12},s_{1,3}}a_{k_{19},s_{1,2}}a_{k_{17},s_{1,3}}a_{k_8,s_{1,4}}a_{k_3,s_{1,3}}=
		a_{k_{12},s_{1,3}}a_{k_{19},s_{1,2}}
		\end{array} $\\
		
		$\begin{array}{llllllll}
		\tau(k_1(s_{1,2}s_{1,4})^4k_1^{-1})=\tau(s_{1,2}s_{1,4}s_{1,2}s_{1,4}s_{1,2}s_{1,4}s_{1,2}s_{1,4})\\
		=a_{k_1,s_{1,2}}a_{k_2,s_{1,4}}a_{k_6,s_{1,2}}a_{k_{13},s_{1,4}}a_{k_{17},s_{1,2}}a_{k_{19},s_{1,4}}a_{k_9,s_{1,2}}a_{k_4,s_{1,4}}=
		a_{k_{13},s_{1,4}}a_{k_{17},s_{1,2}}
		\end{array} $\\
		
		
		$\begin{array}{llllll}
		\tau(k_2(s_{1,2}s_{1,3}s_{1,4}s_{1,3})^2 k_2^{-1})=\tau(s_{1,2}s_{1,2}s_{1,3}s_{1,4}s_{1,3}s_{1,2}s_{1,3}s_{1,4}s_{1,3}s_{1,2})\\
		=a_{k_1,s_{1,2}}a_{k_2,s_{1,2}}a_{k_1,s_{1,3}}a_{k_3,s_{1,4}}a_{k_8,s_{1,3}}a_{k_{17},s_{1,2}}a_{k_{19},s_{1,3}}a_{k_{12},s_{1,4}}a_{k_5,s_{1,3}}a_{k_2,s_{1,2}}=
		a_{k_{17},s_{1,2}}a_{k_{19},s_{1,3}}
		\end{array} $\\
		
		$\begin{array}{lllll}
		\tau(k_2(s_{1,2}s_{1,4})^4k_2^{-1})=\tau(s_{1,2}s_{1,2}s_{1,4}s_{1,2}s_{1,4}s_{1,2}s_{1,4}s_{1,2}s_{1,4}s_{1,2})\\
		=a_{k_1,s_{1,2}}a_{k_2,s_{1,2}}a_{k_1,s_{1,4}}a_{k_4,s_{1,2}}a_{k_9,s_{1,4}}a_{k_{19},s_{1,2}}a_{k_{17}s_{1,4}}a_{k_{13},s_{1,2}}a_{k_6,s_{1,4}}a_{k_2,s_{1,2}}=
		a_{k_{19},s_{1,2}}a_{k_{17}s_{1,4}}
		\end{array} $\\
		
		
		$\begin{array}{lllllll}
		\tau(k_3(s_{1,2}s_{1,3}s_{1,4}s_{1,3})^2 k_3^{-1})=\tau(s_{1,3}s_{1,2}s_{1,3}s_{1,4}s_{1,3}s_{1,2}s_{1,3}s_{1,4}s_{1,3}s_{1,3})\\
		=a_{k_1,s_{1,3}}a_{k_3,s_{1,2}}a_{k_7,s_{1,3}}a_{k_{11},s_{1,4}}a_{k_{22},s_{1,3}}a_{k_{20},s_{1,2}}a_{k_{10},s_{1,3}}a_{k_4,s_{1,4}}a_{k_1,s_{1,3}}a_{k_3,s_{1,3}}=
		a_{k_7,s_{1,3}} a_{k_{22},s_{1,3}}
		\end{array} $\\
		
		$\begin{array}{lllll}
		\tau(k_3(s_{1,2}s_{1,4})^4k_3^{-1})=\tau(s_{1,3}s_{1,2}s_{1,4}s_{1,2}s_{1,4}s_{1,2}s_{1,4}s_{1,2}s_{1,4}s_{1,3})\\
		=a_{k_1,s_{1,3}}a_{k_3,s_{1,2}}a_{k_7,s_{1,4}}a_{k_{15},s_{1,2}}a_{k_{21},s_{1,4}}a_{k_{10},s_{1,2}}a_{k_{20},s_{1,4}}a_{k_{16},s_{1,2}}a_{k_8,s_{1,4}}a_{k_3,s_{1,3}}=
		a_{k_{15},s_{1,2}} a_{k_{20},s_{1,4}}
		\end{array} $\\
		
		
		$\begin{array}{llllll}
		\tau(k_4(s_{1,2}s_{1,3}s_{1,4}s_{1,3})^2 k_4^{-1})=\tau(s_{1,4}s_{1,2}s_{1,3}s_{1,4}s_{1,3}s_{1,2}s_{1,3}s_{1,4}s_{1,3}s_{1,4})\\
		=a_{k_1,s_{1,4}}a_{k_4,s_{1,2}}a_{k_9,s_{1,3}}a_{k_{18},s_{1,4}}a_{k_{14},s_{1,3}}a_{k_6,s_{1,2}}a_{k_{13},s_{1,3}}a_{k_{21},s_{1,4}}a_{k_{10},s_{1,3}}a_{k_4,s_{1,4}}=
		a_{k_{18},s_{1,4}} a_{k_{13},s_{1,3}}
		\end{array} $\\
		
		$\begin{array}{llllllllll}
		\tau(k_4(s_{1,2}s_{1,4})^4k_4^{-1})=\tau(s_{1,4}s_{1,2}s_{1,4}s_{1,2}s_{1,4}s_{1,2}s_{1,4}s_{1,2}s_{1,4}s_{1,4})\\
		=a_{k_1,s_{1,4}}a_{k_4,s_{1,2}}a_{k_9,s_{1,4}}a_{k_{19},s_{1,2}}a_{k_{17},s_{1,4}}a_{k_{13},s_{1,2}}a_{k_6,s_{1,4}}a_{k_2,s_{1,2}}a_{k_1,s_{1,4}}a_{k_4,s_{1,4}}=
		a_{k_{19},s_{1,2}}a_{k_{17},s_{1,4}}
		\end{array} $\\
		
		
		$\begin{array}{lllllll}
		\tau(k_5(s_{1,2}s_{1,3}s_{1,4}s_{1,3})^2 k_5^{-1})=\tau(s_{1,2}s_{1,3}s_{1,2}s_{1,3}s_{1,4}s_{1,3}s_{1,2}s_{1,3}s_{1,4}s_{1,3}s_{1,3}s_{1,2})\\
		=a_{k_1,s_{1,2}}a_{k_2,s_{1,3}}a_{k_5,s_{1,2}}a_{k_{11},s_{1,3}}a_{k_7,s_{1,4}}a_{k_{15},s_{1,3}}a_{k_{24},s_{1,2}}a_{k_{14},s_{1,3}}a_{k_6,s_{1,4}}a_{k_2,s_{1,3}} a_{k_5,s_{1,3}}a_{k_2,s_{1,2}}=\\
		a_{k_{11},s_{1,3}} a_{k_{15},s_{1,3}}
		\end{array} $\\
		
		$\begin{array}{lllllll}
		\tau(k_5(s_{1,2}s_{1,4})^4k_5^{-1})=\tau(s_{1,2}s_{1,3}s_{1,2}s_{1,4}s_{1,2}s_{1,4}s_{1,2}s_{1,4}s_{1,2}s_{1,4}s_{1,3}s_{1,2})\\
		=a_{k_1,s_{1,2}}a_{k_2,s_{1,3}}a_{k_5,s_{1,2}}a_{k_{11},s_{1,4}}a_{k_{22},s_{1,2}}a_{k_{18},s_{1,4}}a_{k_{14},s_{1,2}}a_{k_{24},s_{1,4}}a_{k_{23},s_{1,2}}a_{k_{12},s_{1,4}}a_{k_5,s_{1,3}} a_{k_2,s_{1,2}}=\\
		a_{k_{22},s_{1,2}}a_{k_{18},s_{1,4}} a_{k_{24},s_{1,4}}
		\end{array} $\\
		
		
		$\begin{array}{lllllll}
		\tau(k_6(s_{1,2}s_{1,3}s_{1,4}s_{1,3})^2 k_6^{-1})=\tau(s_{1,2}s_{1,4}s_{1,2}s_{1,3}s_{1,4}s_{1,3}s_{1,2}s_{1,3}s_{1,4}s_{1,3}s_{1,4}s_{1,2})\\
		=a_{k_1,s_{1,2}}a_{k_2,s_{1,4}}a_{k_6,s_{1,2}}a_{k_{13},s_{1,3}}a_{k_{21},s_{1,4}}a_{k_{10},s_{1,3}}
		a_{k_4,s_{1,2}}a_{k_9,s_{1,3}}a_{k_{18},s_{1,4}}a_{k_{14},s_{1,3}}a_{k_{6},s_{1,4}}a_{k_2,s_{1,2}}=\\
		a_{k_{13},s_{1,3}} a_{k_{18},s_{1,4}}
		\end{array} $\\
		
		$\begin{array}{llllllll}
		\tau(k_6(s_{1,2}s_{1,4})^4k_6^{-1})=
		\tau(s_{1,2}s_{1,4}s_{1,2}s_{1,4}s_{1,2}s_{1,4}s_{1,2}s_{1,4}s_{1,2}s_{1,4}s_{1,4}s_{1,2})\\
		=a_{k_1,s_{1,2}}a_{k_2,s_{1,4}}a_{k_{6},s_{1,2}}a_{k_{13},s_{1,4}}a_{k_{17},s_{1,2}}a_{k_{19},s_{1,4}}
		a_{k_{9},s_{1,2}}a_{k_4,s_{1,4}}a_{k_1,s_{1,2}}a_{k_2,s_{1,4}}a_{k_6,s_{1,4}}a_{k_2,s_{1,2}}=\\
		a_{k_{13},s_{1,4}} a_{k_{17},s_{1,2}}
		\end{array} $\\
		
		
		$\begin{array}{llllllll}
		\tau(k_7(s_{1,2}s_{1,3}s_{1,4}s_{1,3})^2 k_7^{-1})=\tau(s_{1,3}s_{1,2}s_{1,2}s_{1,3}s_{1,4}s_{1,3}s_{1,2}s_{1,3}s_{1,4}s_{1,3}s_{1,2}s_{1,3})\\
		=a_{k_1,s_{1,3}}a_{k_3,s_{1,2}}a_{k_7,s_{1,2}}a_{k_3,s_{1,3}}a_{k_1,s_{1,4}}a_{k_4,s_{1,3}}a_{k_{10},s_{1,2}}
		a_{k_{20},s_{1,3}}a_{k_{22},s_{1,4}}a_{k_{11},s_{1,3}}a_{k_7,s_{1,2}}a_{k_3,s_{1,3}}=\\
		a_{k_{20},s_{1,3}}a_{k_{11},s_{1,3}}
		\end{array} $\\
		
		$\begin{array}{lllll}
		\tau(k_7(s_{1,2}s_{1,4})^4k_7^{-1})=\tau(s_{1,3}s_{1,2}s_{1,2}s_{1,4}s_{1,2}s_{1,4}s_{1,2}s_{1,4}s_{1,2}s_{1,4}s_{1,2}s_{1,3})\\
		=a_{k_1,s_{1,3}}a_{k_3,s_{1,2}}a_{k_7,s_{1,2}}a_{k_3,s_{1,4}}a_{k_8,s_{1,2}}a_{k_{16},s_{1,4}}a_{k_{20},s_{1,2}}a_{k_{10},s_{1,4}}
		a_{k_{21},s_{1,2}}a_{k_{15},s_{1,4}} a_{k_7,s_{1,2}}a_{k_3,s_{1,3}}=\\
		a_{k_{16},s_{1,4}}a_{k_{21},s_{1,2}}
		\end{array} $\\
		

		$\begin{array}{llllll}
		\tau(k_8(s_{1,2}s_{1,3}s_{1,4}s_{1,3})^2 k_8^{-1})=\tau(s_{1,3}s_{1,4}s_{1,2}s_{1,3}s_{1,4}s_{1,3}s_{1,2}s_{1,3}s_{1,4}s_{1,3}s_{1,4}s_{1,3})\\
		=a_{k_1,s_{1,3}}a_{k_3,s_{1,4}}a_{k_8,s_{1,2}}a_{k_{16},s_{1,3}}a_{k_{23},s_{1,4}}
		a_{k_{24},s_{1,3}}a_{k_{15},s_{1,2}}a_{k_{21},s_{1,3}}a_{k_{13},s_{1,4}}a_{k_{17},s_{1,3}}a_{k_8,s_{1,4}}a_{k_3,s_{1,3}}=\\
		a_{k_{16},s_{1,3}} a_{k_{23},s_{1,4}} a_{k_{24},s_{1,3}} a_{k_{15},s_{1,2}} a_{k_{21},s_{1,3}} a_{k_{13},s_{1,4}}
		\end{array} $\\
		
		$\begin{array}{lllllll}
		\tau(k_8(s_{1,2}s_{1,4})^4k_8^{-1})=
		\tau(s_{1,3}s_{1,4}s_{1,2}s_{1,4}s_{1,2}s_{1,4}s_{1,2}s_{1,4}s_{1,2}s_{1,4}s_{1,4}s_{1,3})\\
		=a_{k_1,s_{1,3}}a_{k_3,s_{1,4}}a_{k_8,s_{1,2}}a_{k_{16},s_{1,4}}a_{k_{20},s_{1,2}}a_{k_{10},s_{1,4}}a_{k_{21},s_{1,2}}
		a_{k_{15},s_{1,4}}a_{k_7,s_{1,2}}a_{k_3,s_{1,4}}a_{k_8,s_{1,4}}a_{k_3,s_{1,3}}=\\
		a_{k_{16},s_{1,4}}a_{k_{21},s_{1,2}}
		\end{array} $\\
		
		
		$\begin{array}{llllll}
		\tau(k_9(s_{1,2}s_{1,3}s_{1,4}s_{1,3})^2 k_9^{-1})=\tau(s_{1,4}s_{1,2}s_{1,2}s_{1,3}s_{1,4}s_{1,3}s_{1,2}s_{1,3}s_{1,4}s_{1,3}s_{1,2}s_{1,4})\\
		=a_{k_1,s_{1,4}}a_{k_4,s_{1,2}}a_{k_9,s_{1,2}}a_{k_4,s_{1,3}}a_{k_{10},s_{1,4}}a_{k_{21},s_{1,3}}a_{k_{13},s_{1,2}}
		a_{k_6,s_{1,3}}a_{k_{14},s_{1,4}}a_{k_{18},s_{1,3}} a_{k_9,s_{1,2}}a_{k_4,s_{1,4}}=\\
		a_{k_{21},s_{1,3}}a_{k_{14},s_{1,4}}
		\end{array} $\\
		
		$\begin{array}{lllll}
		\tau(k_9(s_{1,2}s_{1,4})^4k_9^{-1})=
		\tau(s_{1,4}s_{1,2}s_{1,2}s_{1,4}s_{1,2}s_{1,4}s_{1,2}s_{1,4}s_{1,2}s_{1,4}s_{1,2}s_{1,4})\\
		=a_{k_1,s_{1,4}}a_{k_4,s_{1,2}}a_{k_9,s_{1,2}}a_{k_4,s_{1,4}}a_{k_1,s_{1,2}}
		a_{k_2,s_{1,4}}a_{k_6,s_{1,2}}a_{k_{13},s_{1,4}}a_{k_{17},s_{1,2}}a_{k_{19},s_{1,4}}a_{k_9,s_{1,2}}a_{k_4,s_{1,4}}=\\
		a_{k_{13},s_{1,4}}a_{k_{17},s_{1,2}}
		\end{array} $\\
		
		
		$\begin{array}{llllll}
		\tau(k_{10}(s_{1,2}s_{1,3}s_{1,4}s_{1,3})^2 k_{10}^{-1})=\tau(s_{1,4}s_{1,3}s_{1,2}s_{1,3}s_{1,4}s_{1,3}s_{1,2}s_{1,3}s_{1,4}s_{1,3}s_{1,3}s_{1,4})=\\
		a_{k_1,s_{1,4}}a_{k_4,s_{1,3}}a_{k_{10},s_{1,2}}a_{k_{20},s_{1,3}}a_{k_{22},s_{1,4}}a_{k_{11},s_{1,3}}a_{k_{7},s_{1,2}}
		a_{k_3,s_{1,3}}a_{k_1,s_{1,4}}a_{k_4,s_{1,3}}a_{k_{10},s_{1,3}}a_{k_4,s_{1,4}}=
		a_{k_{20},s_{1,3}}a_{k_{11},s_{1,3}}
		\end{array} $\\
		
		$\begin{array}{llllll}
		\tau(k_{10}(s_{1,2}s_{1,4})^4k_{10}^{-1})=\tau(s_{1,4}s_{1,3}
		s_{1,2}s_{1,4}s_{1,2}s_{1,4}s_{1,2}s_{1,4}s_{1,2}s_{1,4}s_{1,3}s_{1,4})\\
		=a_{k_1,s_{1,4}}a_{k_4,s_{1,3}}a_{k_{10},s_{1,2}}a_{k_{20},s_{1,4}}a_{k_{16},s_{1,2}}a_{k_8,s_{1,4}}a_{k_3,s_{1,2}}a_{k_7,s_{1,4}}a_{k_{15},s_{1,2}}a_{k_{21},s_{1,4}}a_{k_{10},s_{1,3}}a_{k_4,s_{1,4}}=\\
		a_{k_{20},s_{1,4}}a_{k_{15},s_{1,2}}
		\end{array} $\\
		
		$\begin{array}{llllll}
		\tau(k_{11}s_{1,3}^2k_{11}^{-1})=\tau(s_{1,2}s_{1,3}s_{1,2}s_{1,3}s_{1,3}s_{1,2}s_{1,3}s_{1,2})
		=a_{k_1,s_{1,2}}a_{k_2,s_{1,3}}a_{k_5,s_{1,2}}a_{k_{11},s_{1,3}}a_{k_7,s_{1,3}}a_{k_{11},s_{1,2}}\\
		a_{k_5,s_{1,3}}a_{k_2,s_{1,2}}=
		a_{k_{11},s_{1,3}}a_{k_7,s_{1,3}}
		\end{array} $\\
		
		$\begin{array}{lllllll}
		\tau(k_{11}(s_{1,2}s_{1,3}s_{1,4}s_{1,3})^2 k_{11}^{-1})=\tau(s_{1,2}s_{1,3}s_{1,2}s_{1,2}s_{1,3}s_{1,4}s_{1,3}s_{1,2}s_{1,3}s_{1,4}s_{1,3}s_{1,2}s_{1,3}s_{1,2})\\
		=a_{k_1,s_{1,2}}a_{k_2,s_{1,3}}a_{k_5,s_{1,2}}
		a_{k_{11},s_{1,2}}a_{k_5,s_{1,3}}a_{k_2,s_{1,4}}
		a_{k_6,s_{1,3}}a_{k_{14},s_{1,2}}a_{k_{24},s_{1,3}}a_{k_{15},s_{1,4}}a_{k_7,s_{1,3}}a_{k_{11},s_{1,2}}a_{k_5,s_{1,3}}a_{k_2,s_{1,2}}=\\
		a_{k_{24},s_{1,3}}a_{k_7,s_{1,3}}
		\end{array} $\\
		
		$\begin{array}{llllllll}
		\tau(k_{11}(s_{1,2}s_{1,4})^4k_{11}^{-1})=\tau(s_{1,2}s_{1,3}s_{1,2}s_{1,2}s_{1,4}s_{1,2}s_{1,4}s_{1,2}s_{1,4}s_{1,2}s_{1,4}s_{1,2}s_{1,3}s_{1,2})\\
		=a_{k_1,s_{1,2}}a_{k_2,s_{1,3}}a_{k_5,s_{,2}}a_{k_{11},s_{1,2}}a_{k_5,s_{1,4}}
		a_{k_{12},s_{1,2}}a_{k_{23},s_{1,4}}a_{k_{24},s_{1,2}}a_{k_{14},s_{1,4}}a_{k_{18},s_{1,2}}a_{k_{22},s_{1,4}}
		a_{k_{11},s_{1,2}}a_{k_5,s_{1,3}}a_{k_2,s_{1,2}}=\\
		a_{k_{23},s_{1,4}}a_{k_{18},s_{1,2}}
		\end{array} $\\
		
		$\begin{array}{llllll}
		\tau(k_{12}s_{1,3}^2k_{12}^{-1})=\tau(s_{1,2}s_{1,3}s_{1,4}s_{1,3}s_{1,3}s_{1,4}s_{1,3}s_{1,2})=
		a_{k_1,s_{1,2}}a_{k_2,s_{1,3}}a_{k_5,s_{1,4}}a_{k_{12},s_{1,3}}a_{k_{19},s_{1,3}}a_{k_{12},s_{1,4}}\\
		a_{k_5,s_{1,3}}a_{k_2,s_{1,2}}=
		a_{k_{12},s_{1,3}}a_{k_{19},s_{1,3}}
		\end{array} $\\
		
		$\begin{array}{llllllll}
		\tau(k_{12}(s_{1,2}s_{1,3}s_{1,4}s_{1,3})^2 k_{12}^{-1})=\tau(s_{1,2}s_{1,3}s_{1,4}s_{1,2}s_{1,3}s_{1,4}s_{1,3}
		s_{1,2}s_{1,3}s_{1,4}s_{1,3}s_{1,4}s_{1,3}s_{1,2})\\
		=a_{k_1,s_{1,2}}a_{k_2,s_{1,3}}a_{k_5,s_{1,4}}a_{k_{12},s_{1,2}}a_{k_{23},s_{1,3}}a_{k_{16},s_{1,4}}
		a_{k_{20},s_{1,3}}a_{k_{22},s_{1,2}}a_{k_{18},s_{1,3}}a_{k_9,s_{1,4}}a_{k_{19},s_{1,3}}a_{k_{12},s_{1,4}}a_{k_5,s_{1,3}}a_{k_2,s_{1,2}}=\\
		a_{k_{23},s_{1,3}}a_{k_{16},s_{1,4}}
		a_{k_{20},s_{1,3}}a_{k_{22},s_{1,2}} a_{k_{19},s_{1,3}}
		\end{array} $\\
		
		$\begin{array}{llllllll}
		\tau(k_{12}(s_{1,2}s_{1,4})^4k_{12}^{-1})=\tau(s_{1,2}s_{1,3}s_{1,4}s_{1,2}s_{1,4}s_{1,2}s_{1,4}s_{1,2}s_{1,4}s_{1,2}s_{1,4}s_{1,4}s_{1,3}s_{1,2})\\
		=a_{k_1,s_{1,2}}a_{k_2,s_{1,3}}a_{k_5,s_{1,4}}a_{k_{12},s_{1,2}}a_{k_{23},s_{1,4}}a_{k_{24},s_{1,2}}a_{k_{14},s_{1,4}}
		a_{k_{18},s_{1,2}}a_{k_{22},s_{1,4}}a_{k_{11},s_{1,2}}a_{k_5,s_{1,4}}a_{k_{12},s_{1,4}}a_{k_5,s_{1,3}}a_{k_2,s_{1,2}}=\\
		a_{k_{23},s_{1,4}}a_{k_{14},s_{1,4}}a_{k_{18},s_{1,2}}
		\end{array} $\\
		
		$\begin{array}{llllll}
		\tau(k_{13}s_{1,3}^2k_{13}^{-1})=\tau(s_{1,2}s_{1,4}s_{1,2}s_{1,3}s_{1,3}s_{1,2}s_{1,4}s_{1,2})=
		a_{k_1,s_{1,2}}a_{k_2,s_{1,4}}a_{k_6,s_{1,2}}a_{k_{13},s_{1,3}}a_{k_{21},s_{1,3}}a_{k_{13},s_{1,2}}\\
		a_{k_6,s_{1,4}}a_{k_2,s_{1,2}}=
		a_{k_{13},s_{1,3}}a_{k_{21},s_{1,3}}
		\end{array} $\\
		
		$\begin{array}{lllllll}
		\tau(k_{13}s_{1,4}^2k_{13}^{-1})=\tau(s_{1,2}s_{1,4}s_{1,2}s_{1,4}s_{1,4}s_{1,2}s_{1,4}s_{1,2})=
		a_{k_1,s_{1,2}}a_{k_2,s_{1,4}}a_{k_6,s_{1,2}}a_{k_{13},s_{1,4}}a_{k_{17},s_{1,4}}a_{k_{13},s_{1,2}}\\
		a_{k_6,s_{1,4}}a_{k_2,s_{1,2}}=
		a_{k_{13},s_{1,4}}a_{k_{17},s_{1,4}}
		\end{array} $\\
		
		$\begin{array}{lllllllll}
		\tau(k_{13}(s_{1,2}s_{1,3}s_{1,4}s_{1,3})^2 k_{13}^{-1})=\tau(s_{1,2}s_{1,4}s_{1,2}s_{1,2}s_{1,3}s_{1,4}s_{1,3}s_{1,2}s_{1,3}s_{1,4}s_{1,3}s_{1,2}s_{1,4}s_{1,2})\\
		=a_{k_1,s_{1,2}}a_{k_2,s_{1,4}}a_{k_6,s_{1,2}}a_{k_{13},s_{1,2}}
		a_{k_6,s_{1,3}}a_{k_{14},s_{1,4}}a_{k_{18},s_{1,3}}a_{k_9,s_{1,2}}a_{k_4,s_{1,3}}a_{k_{10},s_{1,4}}
		a_{k_{21},s_{1,3}}a_{k_{13},s_{1,2}}a_{k_6,s_{1,4}}a_{k_2,s_{1,2}}=\\
		a_{k_{14},s_{1,4}} a_{k_{21},s_{1,3}}
		\end{array} $\\
		
		$\begin{array}{lllllllll}
		\tau(k_{13}(s_{1,2}s_{1,4})^4k_{13}^{-1})=\tau(s_{1,2}s_{1,4}s_{1,2}s_{1,2}s_{1,4}s_{1,2}s_{1,4}s_{1,2}s_{1,4}s_{1,2}s_{1,4}s_{1,2}s_{1,4}s_{1,2})\\
		=a_{k_1,s_{1,2}}a_{k_2,s_{1,4}}a_{k_6,s_{1,2}}a_{k_{13},s_{1,2}}
		a_{k_6,s_{1,4}}a_{k_2,s_{1,2}}a_{k_1,s_{1,4}}a_{k_4,s_{1,2}}a_{k_9,s_{1,4}}a_{k_{19},s_{1,2}}a_{k_{17},s_{1,4}}
		a_{k_{13},s_{1,2}}a_{k_6,s_{1,}}a_{k_2,s_{1,2}}=\\
		a_{k_{19},s_{1,2}}a_{k_{17},s_{1,4}}
		\end{array} $\\
		
		$\begin{array}{lllll}
		\tau(k_{14}s_{1,4}^2k_{14}^{-1})\tau(s_{1,2}s_{1,4}s_{1,3}s_{1,4}s_{1,4}s_{1,3}s_{1,4}s_{1,2})=
		a_{k_1,s_{1,2}}a_{k_2,s_{1,4}}a_{k_6,s_{1,3}}a_{k_{14},s_{1,4}}a_{k_{18},s_{1,4}}a_{k_{14},s_{1,3}}a_{k_6,s_{1,4}}\\
		a_{k_2,s_{1,2}}=
		a_{k_{14},s_{1,4}}a_{k_{18},s_{1,4}}
		\end{array} $\\
		
		$\begin{array}{lllllll}
		\tau(k_{14}(s_{1,2}s_{1,3}s_{1,4}s_{1,3})^2 k_{14}^{-1})=\tau(s_{1,2}s_{1,4}s_{1,3}s_{1,2}s_{1,3}s_{1,4}s_{1,3}s_{1,2}s_{1,3}s_{1,4}s_{1,3}s_{1,3}s_{1,4}s_{1,2})\\
		=a_{k_1,s_{1,2}}a_{k_2,s_{1,4}}a_{k_6,s_{1,3}}a_{k_{14},s_{1,2}}a_{k_{24},s_{1,3}}a_{k_{15},s_{1,4}}a_{k_7,s_{1,3}}
		a_{k_{11},s_{1,2}}a_{k_5,s_{1,3}}a_{k_2,s_{1,4}}a_{k_6,s_{1,3}}a_{k_{14},s_{1,3}}a_{k_6,s_{1,4}}a_{k_2,s_{1,2}}
		=\\
		a_{k_{24},s_{1,3}} a_{k_7,s_{1,3}}
		\end{array} $\\
		
		$\begin{array}{lllllll}
		\tau(k_{14}(s_{1,2}s_{1,4})^4k_{14}^{-1})=\tau(s_{1,2}s_{1,4}s_{1,3}s_{1,2}s_{1,4}s_{1,2}s_{1,4}s_{1,2}s_{1,4}s_{1,2}s_{1,4}s_{1,3}s_{1,4}s_{1,2})\\
		=a_{k_1,s_{1,2}}a_{k_2,s_{1,4}}a_{k_6,s_{1,3}}a_{k_{14},s_{1,2}}a_{k_{24},s_{1,4}}
		a_{k_{23},s_{1,2}}a_{k_{12},s_{1,4}}a_{k_5,s_{1,2}}a_{k_{11},s_{1,4}}a_{k_{22},s_{1,2}}
		a_{k_{18},s_{1,4}} a_{k_{14},s_{1,3}}a_{k_6,s_{1,4}}a_{k_2,s_{1,2}}=\\
		a_{k_{24},s_{1,4}} a_{k_{22},s_{1,2}}
		a_{k_{18},s_{1,4}}
		\end{array} $\\
		
		$\begin{array}{lllllll}
		\tau(k_{15}s_{1,2}^2k_{15}^{-1})=\tau(s_{1,3}s_{1,2}s_{1,4}s_{1,2}s_{1,2}s_{1,4}s_{1,2}s_{1,3})=
		a_{k_1,s_{1,3}}a_{k_3,s_{1,2}}a_{k_7,s_{1,4}}a_{k_{15},s_{1,2}}a_{k_{21},s_{1,2}}a_{k_{15},s_{1,4}}\\
		a_{k_7,s_{1,2}}a_{k_3,s_{1,3}}=
		a_{k_{15},s_{1,2}}a_{k_{21},s_{1,2}}
		\end{array} $\\
		
		$\begin{array}{llllll}
		\tau(k_{15}s_{1,3}^2k_{15}^{-1})=\tau(s_{1,3}s_{1,2}s_{1,4}s_{1,3}s_{1,3}s_{1,4}s_{1,2}s_{1,3})
		=a_{k_1,s_{1,3}}a_{k_3,s_{1,2}}a_{k_7,s_{1,4}}a_{k_{15},s_{1,3}}a_{k_{24},s_{1,3}}a_{k_{15},s_{1,4}}\\
		a_{k_7,s_{1,2}}a_{k_3,s_{1,3}}=
		a_{k_{15},s_{1,3}}a_{k_{24},s_{1,3}}
		\end{array} $\\
		
		$\begin{array}{lllllllll}
		\tau(k_{15}(s_{1,2}s_{1,3}s_{1,4}s_{1,3})^2 k_{15}^{-1})=\tau(s_{1,3}s_{1,2}s_{1,4}s_{1,2}s_{1,3}s_{1,4}s_{1,3}s_{1,2}s_{1,3}s_{1,4}s_{1,3}s_{1,4}s_{1,2}s_{1,3})\\
		=a_{k_1,s_{1,3}}a_{k_3,s_{1,2}}a_{k_7,s_{1,4}}a_{k_{15},s_{1,2}}a_{k_{21},s_{1,3}}
		a_{k_{13},s_{1,4}}a_{k_{17},s_{1,3}}a_{k_8,s_{1,2}}a_{k_{16},s_{1,3}}a_{k_{23},s_{1,4}}
		a_{k_{24},s_{1,3}}a_{k_{15},s_{1,4}}a_{k_7,s_{1,2}}a_{k_3,s_{1,3}}=\\
		a_{k_{15},s_{1,2}}a_{k_{21},s_{1,3}}
		a_{k_{13},s_{1,4}}a_{k_{16},s_{1,3}}a_{k_{23},s_{1,4}}
		a_{k_{24},s_{1,3}}
		\end{array} $\\
		
		$\begin{array}{lllllll}
		\tau(k_{15}(s_{1,2}s_{1,4})^4k_{15}^{-1})=\tau(s_{1,3}s_{1,2}s_{1,4}s_{1,2}s_{1,4}s_{1,2}s_{1,4}s_{1,2}s_{1,4}s_{1,2}s_{1,4}s_{1,4}s_{1,2}s_{1,3})\\
		=a_{k_1,s_{1,3}}a_{k_3,s_{1,2}}a_{k_7,s_{1,4}}a_{k_{15},s_{1,2}}a_{k_{21},s_{1,4}}a_{k_{10},s_{1,2}}a_{k_{20},s_{1,4}}
		a_{k_{16},s_{1,2}}a_{k_8,s_{1,4}}a_{k_3,s_{1,2}}a_{k_7,s_{1,4}}a_{k_{15},s_{1,4}}a_{k_7,s_{1,2}}a_{k_3,s_{1,3}}=\\
		a_{k_{15},s_{1,2}}
		a_{k_{20},s_{1,4}}
		\end{array} $\\
		
		$\begin{array}{llllllll}
		\tau(k_{16}s_{1,3}^2k_{16}^{-1})=\tau(s_{1,3}s_{1,4}s_{1,2}s_{1,3}s_{1,3}s_{1,2}s_{1,4}s_{1,3})
		=a_{k_1,s_{1,3}}a_{k_3,s_{1,4}}a_{k_8,s_{1,2}}a_{k_{16},s_{1,3}}a_{k_{23},s_{1,3}}a_{k_{16},s_{1,2}}a_{k_8,s_{1,4}}\\
		a_{k_3,s_{1,3}}=
		a_{k_{16},s_{1,3}}a_{k_{23},s_{1,3}}
		\end{array} $\\
		
		$\begin{array}{lllllllll}
		\tau(k_{16}s_{1,4}^2k_{16}^{-1})=\tau(s_{1,3}s_{1,4}s_{1,2}s_{1,4}s_{1,4}s_{1,2}s_{1,4}s_{1,3})=
		a_{k_1,s_{1,3}}a_{k_3,s_{1,4}}a_{k_8,s_{1,2}}a_{k_{16},s_{1,4}}a_{k_{20},s_{1,4}}a_{k_{16},s_{1,2}}a_{k_8,s_{1,2}}\\
		a_{k_3,s_{1,3}}=
		a_{k_{16},s_{1,4}}a_{k_{20},s_{1,4}}
		\end{array} $\\
		
		$\begin{array}{llllllll}
		\tau(k_{16}(s_{1,2}s_{1,3}s_{1,4}s_{1,3})^2 k_{16}^{-1})=\tau(s_{1,3}s_{1,4}s_{1,2}s_{1,2}s_{1,3}s_{1,4}s_{1,3}s_{1,2}s_{1,3}s_{1,4}s_{1,3}s_{1,2}s_{1,4}s_{1,3})\\
		=a_{k_1,s_{1,3}}a_{k_3,s_{1,4}}a_{k_8,s_{1,2}}a_{k_{16},s_{1,2}}
		a_{k_8,s_{1,3}}a_{k_{17},s_{1,4}}a_{k_{13},s_{1,3}}a_{k_{21},s_{1,2}}a_{k_{15},s_{1,3}}a_{k_{24},s_{1,4}}
		a_{k_{23},s_{1,3}}a_{k_{16},s_{1,2}}a_{k_8,s_{1,4}}a_{k_3,s_{1,3}}=\\
		a_{k_{17},s_{1,4}}a_{k_{13},s_{1,3}}a_{k_{21},s_{1,2}}a_{k_{15},s_{1,3}}a_{k_{24},s_{1,4}}
		a_{k_{23},s_{1,3}}
		\end{array} $\\
		
		$\begin{array}{llllllll}
		\tau(k_{16}(s_{1,2}s_{1,4})^4k_{16}^{-1})=\tau(s_{1,3}s_{1,4}s_{1,2}s_{1,2}s_{1,4}s_{1,2}s_{1,4}s_{1,2}s_{1,4}s_{1,2}s_{1,4}s_{1,2}s_{1,4}s_{1,3})\\
		=a_{k_1,s_{1,3}}a_{k_3,s_{1,4}}a_{k_8,s_{1,2}}a_{k_{16},s_{1,2}}a_{k_8,s_{1,4}}a_{k_3,s_{,2}}
		a_{k_7,s_{1,4}}a_{k_{15},s_{1,2}}a_{k_{21},s_{1,4}}a_{k_{10},s_{1,2}}a_{k_{20},s_{1,4}}
		a_{k_{16},s_{1,2}}a_{k_8,s_{1,4}}a_{k_3,s_{1,3}}=\\
		a_{k_{15},s_{1,2}}a_{k_{20},s_{1,4}}
		\end{array} $\\
		
		$\begin{array}{lllllll}
		\tau(k_{17}s_{1,2}^2k_{17}^{-1})=\tau(s_{1,3}s_{1,4}s_{1,3}s_{1,2}s_{1,2}s_{1,3}s_{1,4}s_{1,3})
		=a_{k_1,s_{1,3}}a_{k_3,s_{1,4}}a_{k_8,s_{1,3}}a_{k_{17},s_{1,2}}a_{k_{19},s_{1,2}}a_{k_{17},s_{1,3}}a_{k_8,s_{1,4}}\\
		a_{k_3,s_{1,3}}=
		a_{k_{17},s_{1,2}}a_{k_{19},s_{1,2}}
		\end{array} $\\
		
		$\begin{array}{llllll}
		\tau(k_{17}s_{1,4}^2k_{17}^{-1})=\tau(s_{1,3}s_{1,4}s_{1,3}s_{1,3}s_{1,3}s_{1,3}s_{1,4}s_{1,3})=
		a_{k_1,s_{1,3}}a_{k_3,s_{1,4}}a_{k_8,s_{1,3}}a_{k_{17},s_{1,4}}a_{k_{13},s_{1,4}}a_{k_{17},s_{1,3}}a_{k_8,s_{1,4}}\\
		a_{k_3,s_{1,3}}=
		a_{k_{17},s_{1,4}}a_{k_{13},s_{1,4}}
		\end{array} $\\
		
		$\begin{array}{llllll}
		\tau(k_{17}(s_{1,2}s_{1,3}s_{1,4}s_{1,3})^2 k_{17}^{-1})=\tau(s_{1,3}s_{1,4}s_{1,3}s_{1,2}s_{1,3}s_{1,4}s_{1,3}s_{1,2}s_{1,3}s_{1,4}s_{1,3}s_{1,3}s_{1,4}s_{1,3})\\
		=a_{k_1,s_{1,3}}a_{k_3,s_{1,4}}a_{k_8,s_{1,3}}a_{k_{17},s_{1,2}}a_{k_{19},s_{1,3}}
		a_{k_{12},s_{1,4}}a_{k_5,s_{1,3}}a_{k_2,s_{1,2}}a_{k_1,s_{1,3}}a_{k_3,s_{1,4}}
		a_{k_8,s_{1,3}}a_{k_{17},s_{1,3}}a_{k_8,s_{1,4}}a_{k_3,s_{1,3}}=\\
		a_{k_{17},s_{1,2}}a_{k_{19},s_{1,3}}
		\end{array} $\\
		
		$\begin{array}{llllllll}
		\tau(k_{17}(s_{1,2}s_{1,4})^4k_{17}^{-1})=\tau(s_{1,3}s_{1,4}s_{1,3}s_{1,2}s_{1,4}s_{1,2}s_{1,4}s_{1,2}s_{1,4}s_{1,2}s_{1,4}s_{1,3}s_{1,4}s_{1,3})\\
		=a_{k_1,s_{1,3}}a_{k_3,s_{1,4}}a_{k_8,s_{1,3}}a_{k_{17},s_{1,2}}
		a_{k_{19},s_{1,4}}a_{k_9,s_{1,2}}a_{k_4,s_{1,4}}a_{k_1,s_{1,2}}a_{k_2,s_{1,4}}a_{k_6,s_{1,2}}a_{k_{13},s_{1,4}}
		a_{k_{17},s_{1,3}}a_{k_8,s_{1,4}}a_{k_3,s_{1,3}}=\\
		a_{k_{17},s_{1,2}} a_{k_{13},s_{1,4}}
		\end{array} $\\
		
		$\begin{array}{llllllll}
		\tau(k_{18}s_{1,2}^2k_{18}^{-1})=\tau(s_{1,4}s_{1,2}s_{1,3}s_{1,2}s_{1,2}s_{1,3}s_{1,2}s_{1,4})
		=a_{k_1,s_{1,4}}a_{k_4,s_{1,2}}a_{k_9,s_{1,3}}a_{k_{18},s_{1,2}}a_{k_{22},s_{1,2}}a_{k_{18},s_{1,3}}\\
		a_{k_9,s_{1,2}}a_{k_4,s_{1,4}}
		=
		a_{k_{18},s_{1,2}}a_{k_{22},s_{1,2}}
		\end{array} $\\
		
		$\begin{array}{llllllll}
		\tau(k_{18}s_{1,4}^2k_{18}^{-1})=\tau(s_{1,4}s_{1,2}s_{1,3}s_{1,4}s_{1,4}s_{1,3}s_{1,2}s_{1,4})=
		a_{k_1,s_{1,4}}a_{k_4,s_{1,2}}a_{k_9,s_{1,3}}a_{k_{18},s_{1,4}}a_{k_{14},s_{1,4}}a_{k_{18},s_{1,3}}\\
		a_{k_9,s_{1,2}}a_{k_4,s_{1,4}}=
		a_{k_{18},s_{1,4}}a_{k_{14},s_{1,4}}
		\end{array} $\\
		
		$\begin{array}{llllllll}
		\tau(k_{18}(s_{1,2}s_{1,3}s_{1,4}s_{1,3})^2 k_{18}^{-1})=\tau(s_{1,4}s_{1,2}s_{1,3}s_{1,2}s_{1,3}s_{1,4}s_{1,3}s_{1,2}s_{1,3}s_{1,4}s_{1,3}s_{1,3}s_{1,2}s_{1,4})\\
		=a_{k_1,s_{1,4}}a_{k_4,s_{1,2}}a_{k_9,s_{1,3}}a_{k_{18},s_{1,2}}a_{k_{22},s_{1,3}}
		a_{k_{20},s_{1,4}}a_{k_{16},s_{1,3}}a_{k_{23},s_{1,2}}a_{k_{12},s_{1,3}}
		a_{k_{19},s_{1,4}}a_{k_9,s_{1,3}}a_{k_{18},s_{1,3}}a_{k_9,s_{1,2}}a_{k_4,s_{1,4}}=\\
		a_{k_{18},s_{1,2}}a_{k_{22},s_{1,3}}
		a_{k_{20},s_{1,4}}a_{k_{16},s_{1,3}}a_{k_{12},s_{1,3}}
		\end{array}$\\
		
		$\begin{array}{llllllllll}
		\tau(k_{18}(s_{1,2}s_{1,4})^4k_{18}^{-1})=\tau(s_{1,4}s_{1,2}s_{1,3}s_{1,2}s_{1,4}s_{1,2}s_{1,4}s_{1,2}s_{1,4}s_{1,2}s_{1,4}s_{1,3}s_{1,2}s_{1,4})\\
		=a_{k_1,s_{1,4}}a_{k_4,s_{1,2}}a_{k_9,s_{1,3}}a_{k_{18},s_{1,2}}
		a_{k_{22},s_{1,4}}a_{k_{11},s_{1,2}}a_{k_5,s_{1,4}}
		a_{k_{12},s_{1,2}}a_{k_{23},s_{1,4}}a_{k_{24},s_{1,2}}a_{k_{14},s_{1,4}}
		a_{k_{18},s_{1,3}}a_{k_9,s_{1,2}}a_{k_4,s_{1,4}}=\\
		a_{k_{18},s_{1,2}}a_{k_{23},s_{1,4}}a_{k_{14},s_{1,4}}
		\end{array}$\\
		
		$\begin{array}{llllll}
		\tau(k_{19}s_{1,2}^2k_{19}^{-1})=\tau(s_{1,4}s_{1,2}s_{1,4}s_{1,2}s_{1,2}s_{1,4}s_{1,2}s_{1,4})
		=a_{k_1,s_{1,4}}a_{k_4,s_{1,2}}a_{k_9,s_{1,4}}a_{k_{19},s_{1,2}}a_{k_{17},s_{1,2}}a_{k_{19},s_{1,4}}\\
		a_{k_9,s_{1,2}}a_{k_4,s_{1,4}}=
		a_{k_{19},s_{1,2}}a_{k_{17},s_{1,2}}
		\end{array}$\\
		
		$\begin{array}{llllllllll}
		\tau(k_{19}s_{1,3}^2k_{19}^{-1})=\tau(s_{1,4}s_{1,2}s_{1,4}s_{1,3}s_{1,3}s_{1,4}s_{1,2}s_{1,4})
		=a_{k_1,s_{1,4}}a_{k_4,s_{1,2}}a_{k_9,s_{1,4}}a_{k_{19},s_{1,3}}a_{k_{12},s_{1,3}}a_{k_{19},s_{1,4}}\\
		a_{k_9,s_{1,2}}a_{k_4,s_{1,4}}=
		a_{k_{19},s_{1,3}}a_{k_{12},s_{1,3}}
		\end{array} $\\
		
		$\begin{array}{llllllll}
		\tau(k_{19}(s_{1,2}s_{1,3}s_{1,4}s_{1,3})^2 k_{19}^{-1})=\tau(s_{1,4}s_{1,2}s_{1,4}s_{1,2}s_{1,3}s_{1,4}s_{1,3}s_{1,2}s_{1,3}s_{1,4}s_{1,3}s_{1,4}s_{1,2}s_{1,4})\\
		=a_{k_1,s_{1,4}}a_{k_4,s_{1,2}}a_{k_9,s_{1,4}}a_{k_{19},s_{1,2}}
		a_{k_{17},s_{1,3}}a_{k_8,s_{1,4}}a_{k_3,s_{1,3}}a_{k_1,s_{1,2}}a_{k_2,s_{1,3}}a_{k_5,s_{1,4}}
		a_{k_{12},s_{1,3}}a_{k_{19},s_{1,4}}a_{k_9,s_{1,2}}a_{k_4,s_{1,4}}=\\
		a_{k_{19},s_{1,2}}a_{k_{12},s_{1,3}}
		\end{array} $\\
		
		$\begin{array}{lllllll}
		\tau(k_{19}(s_{1,2}s_{1,4})^4k_{19}^{-1})=\tau(s_{1,4}s_{1,2}s_{1,4}s_{1,2}s_{1,4}s_{1,2}s_{1,4}s_{1,2}s_{1,4}s_{1,2}s_{1,4}s_{1,4}s_{1,2}s_{1,4})\\
		=a_{k_1,s_{1,4}}a_{k_4,s_{1,2}}a_{k_9,s_{1,4}}a_{k_{19},s_{1,2}}
		a_{k_{17},s_{1,4}}a_{k_{13},s_{1,2}}a_{k_6,s_{1,4}}a_{k_2,s_{1,2}}
		a_{k_1,s_{1,4}}a_{k_4,s_{1,2}}a_{k_9,s_{1,4}}a_{k_{19},s_{1,4}}a_{k_9,s_{1,2}}a_{k_4,s_{1,4}}=\\
		a_{k_{19},s_{1,2}}a_{k_{17},s_{1,4}}
		\end{array} $\\
		
		$\begin{array}{lllllll}
		\tau(k_{20}s_{1,3}^2k_{20}^{-1})=\tau(s_{1,4}s_{1,3}s_{1,2}s_{1,3}s_{1,3}s_{1,2}s_{1,3}s_{1,4})=
		a_{k_1,s_{1,4}}a_{k_4,s_{1,3}}a_{k_{10},s_{1,2}}a_{k_{20},s_{1,3}}\\
		a_{k_{22},s_{1,3}}a_{k_{20},s_{1,2}}a_{k_{10},s_{1,3}}a_{k_4,s_{1,4}}=
		a_{k_{20},s_{1,3}}
		a_{k_{22},s_{1,3}}
		\end{array} $\\
		
		$\begin{array}{lllllll}
		\tau(k_{20}s_{1,4}^2k_{20}^{-1})=\tau(s_{1,4}s_{1,3}s_{1,2}s_{1,4}s_{1,4}s_{1,2}s_{1,3}s_{1,4})=
		a_{k_1,s_{1,4}}a_{k_4,s_{1,3}}a_{k_{10},s_{1,2}}a_{k_{20},s_{1,4}}\\
		a_{k_{16},s_{1,4}}a_{k_{20},s_{1,2}}a_{k_{10},s_{1,3}}a_{k_4,s_{1,4}}=
		a_{k_{20},s_{1,4}}
		a_{k_{16},s_{1,4}}
		\end{array} $\\
		
		$\begin{array}{lllllll}
		\tau(k_{20}(s_{1,2}s_{1,3}s_{1,4}s_{1,3})^2 k_{20}^{-1})=\tau(s_{1,4}s_{1,3}s_{1,2}s_{1,2}s_{1,3}s_{1,4}s_{1,3}s_{1,2}s_{1,3}s_{1,4}s_{1,3}s_{1,2}s_{1,3}s_{1,4})\\
		=a_{k_1,s_{1,4}}a_{k_4,s_{1,3}}a_{k_{10},s_{1,2}}a_{k_{20},s_{1,2}}a_{k_{10},s_{1,3}}
		a_{k_4,s_{1,4}}a_{k_1,s_{1,3}}a_{k_3,s_{1,2}}a_{k_7,s_{1,3}}a_{k_{11},s_{1,4}}
		a_{k_{22},s_{1,3}}a_{k_{20},s_{1,2}}a_{k_{10},s_{1,3}}a_{k_4,s_{1,4}}=\\
		a_{k_7,s_{1,3}} a_{k_{22},s_{1,3}}
		\end{array} $\\
		
		$\begin{array}{llllllll}
		\tau(k_{20}(s_{1,2}s_{1,4})^4k_{20}^{-1})=\tau(s_{1,4}s_{1,3}s_{1,2}s_{1,2}s_{1,4}s_{1,2}s_{1,4}s_{1,2}s_{1,4}s_{1,2}s_{1,4}s_{1,2}s_{1,3}s_{1,4})\\
		=a_{k_1,s_{1,4}}a_{k_4,s_{1,3}}a_{k_{10},s_{1,2}}a_{k_{20},s_{1,2}}
		a_{k_{10},s_{1,4}}a_{k_{21},s_{1,2}}a_{k_{15},s_{1,4}}a_{k_7,s_{1,2}}
		a_{k_3,s_{1,4}}a_{k_8,s_{1,2}}a_{k_{16},s_{1,4}}
		a_{k_{20},s_{1,2}}a_{k_{10},s_{1,3}}a_{k_4,s_{1,4}}=\\
		a_{k_{21},s_{1,2}}a_{k_{16},s_{1,4}}
		\end{array} $\\
		
		$\begin{array}{llllll}
		\tau(k_{21}s_{1,2}^2k_{21}^{-1})=\tau(s_{1,4}s_{1,3}s_{1,4}s_{1,2}s_{1,2}s_{1,4}s_{1,3}s_{1,4})=
		a_{k_1,s_{1,4}}a_{k_4,s_{1,3}}a_{k_{10},s_{1,4}}a_{k_{21},s_{1,2}}
		a_{k_{15},s_{1,2}}a_{k_{21},s_{1,4}}\\ 
		a_{k_{10},s_{1,3}}a_{k_4,s_{1,4}}=
		a_{k_{21},s_{1,2}}
		a_{k_{15},s_{1,2}}
		\end{array} $\\
		
		$\begin{array}{lllll}
		\tau(k_{21}s_{1,3}^2k_{21}^{-1})=\tau(s_{1,4}s_{1,3}s_{1,4}s_{1,3}s_{1,3}s_{1,4}s_{1,3}s_{1,4})=
		a_{k_1,s_{1,4}}a_{k_4,s_{1,3}}a_{k_{10},s_{1,4}}a_{k_{21},s_{1,3}}\\
		a_{k_{13},s_{1,3}}a_{k_{21},s_{1,4}}a_{k_{10},s_{1,3}}a_{k_4,s_{1,4}}=
		a_{k_{21},s_{1,3}}
		a_{k_{13},s_{1,3}}
		\end{array} $\\
		
		$\begin{array}{lllll}
		\tau(k_{21}(s_{1,2}s_{1,3}s_{1,4}s_{1,3})^2 k_{21}^{-1})=\tau(s_{1,4}s_{1,3}s_{1,4}s_{1,2}s_{1,3}s_{1,4}s_{1,3}s_{1,2}s_{1,3}s_{1,4}s_{1,3}s_{1,4}s_{1,3}s_{1,4})\\
		=a_{k_1,s_{1,4}}a_{k_4,s_{1,3}}a_{k_{10},s_{1,4}}a_{k_{21},s_{1,2}}
		a_{k_{15},s_{1,3}}a_{k_{24},s_{1,4}}a_{k_{23},s_{1,3}}a_{k_{16},s_{1,2}}a_{k_8,s_{1,3}}a_{k_{17},s_{1,4}}
		a_{k_{13},s_{1,3}}a_{k_{21},s_{1,4}}a_{k_{10},s_{1,3}}a_{k_4,s_{1,4}}=\\%
		a_{k_{21},s_{1,2}}
		a_{k_{15},s_{1,3}}a_{k_{24},s_{1,4}}a_{k_{23},s_{1,3}}a_{k_{17},s_{1,4}}
		a_{k_{13},s_{1,3}}
		\end{array} $\\
		
		$\begin{array}{lllll}
		\tau(k_{21}(s_{1,2}s_{1,4})^4k_{21}^{-1})=\tau(s_{1,4}s_{1,3}s_{1,4}s_{1,2}s_{1,4}s_{1,2}s_{1,4}s_{1,2}s_{1,4}s_{1,2}s_{1,4}s_{1,4}s_{1,3}s_{1,4})\\
		=a_{k_1,s_{1,4}}a_{k_4,s_{1,3}}a_{k_{10},s_{1,4}}a_{k_{21},s_{1,2}}
		a_{k_{15},s_{1,4}}a_{k_7,s_{1,2}}a_{k_3,s_{1,4}}a_{k_8,s_{1,2}}
		a_{k_{16},s_{1,4}}a_{k_{20},s_{1,2}}a_{k_{10},s_{1,4}}
		a_{k_{21},s_{1,4}}a_{k_{10},s_{1,3}}a_{k_4,s_{1,4}}=\\
		a_{k_{21},s_{1,2}}a_{k_{16},s_{1,4}}
		\end{array} $\\
		
		$\begin{array}{llllll}
		\tau(k_{22}s_{1,2}^2k_{22}^{-1})=\tau(s_{1,2}s_{1,3}s_{1,2}s_{1,4}s_{1,2}s_{1,2}s_{1,4}s_{1,2}s_{1,3}s_{1,2})=
		a_{k_1,s_{1,2}}a_{k_2,s_{1,3}}a_{k_5,s_{1,2}}a_{k_{11},s_{1,4}}
		a_{k_{22},s_{1,2}}\\
		a_{k_{18},s_{1,2}}a_{k_{22},s_{1,4}}
		a_{k_{11},s_{1,2}}a_{k_5,s_{1,3}}a_{k_2,s_{1,2}}=
		a_{k_{22},s_{1,2}}a_{k_{18},s_{1,2}}
		\end{array} $\\
		
		$\begin{array}{llllll}
		\tau(k_{22}s_{1,3}^2k_{22}^{-1})=\tau(s_{1,2}s_{1,3}s_{1,2}s_{1,4}s_{1,3}s_{1,3}s_{1,4}s_{1,2}s_{1,3}s_{1,2})=
		a_{k_1,s_{1,2}}a_{k_2,s_{1,3}}a_{k_5,s_{1,2}}a_{k_{11},s_{1,4}}
		a_{k_{22},s_{1,3}}\\
		a_{k_{20},s_{1,3}}a_{k_{22},s_{1,4}}
		a_{k_{11},s_{1,2}}a_{k_5,s_{1,3}}a_{k_2,s_{1,2}}=
		a_{k_{22},s_{1,3}}a_{k_{20},s_{1,3}}
		\end{array} $\\
		
		$\begin{array}{llllllll}
		\tau(k_{22}(s_{1,2}s_{1,3}s_{1,4}s_{1,3})^2 k_{22}^{-1})=\tau(s_{1,2}s_{1,3}s_{1,2}s_{1,4}s_{1,2}s_{1,3}s_{1,4}s_{1,3}s_{1,2}s_{1,3}s_{1,4}s_{1,3}s_{1,4}s_{1,2}s_{1,3}s_{1,2})\\
		= a_{k_1,s_{1,2}}a_{k_2,s_{1,3}}a_{k_5,s_{1,2}}a_{k_{11},s_{1,4}}a_{k_{22},s_{1,2}}
		a_{k_{18},s_{1,3}}a_{k_9,s_{1,4}}a_{k_{19},s_{1,3}}a_{k_{12},s_{1,2}}a_{k_{23},s_{1,3}}
		a_{k_{16},s_{1,4}}a_{k_{20},s_{1,3}}a_{k_{22},s_{1,4}}\\
		a_{k_{11},s_{1,2}}a_{k_5,s_{1,3}}a_{k_2,s_{1,2}}=
		a_{k_{22},s_{1,2}}a_{k_{19},s_{1,3}}a_{k_{23},s_{1,3}}
		a_{k_{16},s_{1,4}}a_{k_{20},s_{1,3}}
		\end{array} $\\
		
		$\begin{array}{llllll}
		\tau(k_{22}(s_{1,2}s_{1,4})^4k_{22}^{-1})=\tau(s_{1,2}s_{1,3}s_{1,2}s_{1,4}s_{1,2}s_{1,4}s_{1,2}s_{1,4}s_{1,2}s_{1,4}s_{1,2}s_{1,4}s_{1,4}s_{1,2}s_{1,3}s_{1,2})\\
		=a_{k_1,s_{1,2}}a_{k_2,s_{1,3}}a_{k_5,s_{1,2}}a_{k_{11},s_{1,4}}
		a_{k_{22},s_{1,2}}a_{k_{18},s_{1,4}}a_{k_{14},s_{1,2}}a_{k_{24},s_{1,4}}a_{k_{23},s_{1,2}}a_{k_{12},s_{1,4}}a_{k_5,s_{1,2}}
		a_{k_{11},s_{1,4}}a_{k_{22},s_{1,4}}\\
		a_{k_{11},s_{1,2}}a_{k_5,s_{1,3}}a_{k_2,s_{1,2}}=
		a_{k_{22},s_{1,2}}a_{k_{18},s_{1,4}}a_{k_{24},s_{1,4}}
		\end{array} $\\
		
		$\begin{array}{llllll}
		\tau(k_{23}s_{1,3}^2k_{23}^{-1})=\tau(s_{1,2}s_{1,3}s_{1,4}s_{1,2}s_{1,3}s_{1,3}s_{1,2}s_{1,4}s_{1,3}s_{1,2})
		=a_{k_1,s_{1,2}}a_{k_2,s_{1,3}}a_{k_5,s_{1,4}}a_{k_{12},s_{1,2}}a_{k_{23},s_{1,3}}\\
		a_{k_{16},s_{1,3}}a_{k_{23},s_{1,2}}a_{k_{12},s_{1,4}}a_{k_5,s_{1,3}}a_{k_2,s_{1,2}}=
		a_{k_{23},s_{1,3}}
		a_{k_{16},s_{1,3}}
		\end{array} $\\
		
		$\begin{array}{lllll}
		\tau(k_{23}s_{1,4}^2k_{23}^{-1})=\tau(s_{1,2}s_{1,3}s_{1,4}s_{1,2}s_{1,4}s_{1,4}s_{1,2}s_{1,4}s_{1,3}s_{1,2})=
		a_{k_1,s_{1,2}}a_{k_2,s_{1,3}}a_{k_5,s_{1,4}}a_{k_{12},s_{1,2}}a_{k_{23},s_{1,4}}\\
		a_{k_{24},s_{1,4}}
		a_{k_{23},s_{1,2}}a_{k_{12},s_{1,4}}a_{k_5,s_{1,3}}a_{k_2,s_{1,2}}=
		a_{k_{23},s_{1,4}}a_{k_{24},s_{1,4}}
		\end{array} $\\
		
		$\begin{array}{llllllll}
		\tau(k_{23}(s_{1,2}s_{1,3}s_{1,4}s_{1,3})^2 k_{23}^{-1})=\tau(s_{1,2}s_{1,3}s_{1,4}s_{1,2}s_{1,2}s_{1,3}s_{1,4}s_{1,3}s_{1,2}s_{1,3}s_{1,4}s_{1,3}s_{1,2}s_{1,4}s_{1,3}s_{1,2})\\
		=a_{k_1,s_{1,2}}a_{k_2,s_{1,3}}a_{k_5,s_{1,4}}a_{k_{12},s_{1,2}}
		a_{k_{23},s_{1,2}}a_{k_{12},s_{1,3}}a_{k_{19},s_{1,4}}a_{k_9,s_{1,3}}a_{k_{18},s_{1,2}}
		a_{k_{22},s_{1,3}}a_{k_{20},s_{1,4}}a_{k_{16},s_{1,3}}a_{k_{23},s_{1,2}}\\
		a_{k_{12},s_{1,4}}a_{k_5,s_{1,3}}a_{k_2,s_{1,2}}=
		a_{k_{12},s_{1,3}} a_{k_{18},s_{1,2}}
		a_{k_{22},s_{1,3}}a_{k_{20},s_{1,4}}a_{k_{16},s_{1,3}}
		\end{array} $\\
		
		$\begin{array}{llllllll}
		\tau(k_{23}(s_{1,2}s_{1,4})^4k_{23}^{-1})=\tau(s_{1,2}s_{1,3}s_{1,4}s_{1,2}s_{1,2}s_{1,4}s_{1,2}s_{1,4}s_{1,2}s_{1,4}s_{1,2}s_{1,4}s_{1,2}s_{1,4}s_{1,3}s_{1,2})\\
		=a_{k_1,s_{1,2}}a_{k_2,s_{1,3}}a_{k_5,s_{1,4}}a_{k_{12},s_{1,2}}
		a_{k_{23},s_{1,2}}a_{k_{12},s_{1,4}}a_{k_5,s_{1,2}}a_{k_{11}, s_{1,4}}a_{k_{22},s_{1,2}}a_{k_{18},s_{1,4}}a_{k_{14},s_{1,2}}
		a_{k_{24},s_{1,4}}a_{k_{23},s_{1,2}}\\
		a_{k_{12},s_{1,4}}a_{k_5,s_{1,3}}a_{k_2,s_{1,2}}=
		a_{k_{22},s_{1,2}}a_{k_{18},s_{1,4}}a_{k_{24},s_{1,4}}
		\end{array} $\\
		
		$\begin{array}{lllllllll}
		\tau(k_{24}s_{1,3}^2k_{24}^{-1})=\tau(s_{1,2}s_{1,4}s_{1,3}s_{1,2}s_{1,3}s_{1,3}s_{1,2}s_{1,3}s_{1,4}s_{1,2})
		a_{k_1,s_{1,2}}a_{k_2,s_{1,4}}a_{k_6,s_{1,3}}a_{k_{14},s_{1,2}}a_{k_{24},s_{1,3}}\\
		a_{k_{15},s_{1,3}}a_{k_{24},s_{1,2}}a_{k_{14},s_{1,3}}a_{k_6,s_{1,4}}a_{k_2,s_{1,2}}=
		a_{k_{24},s_{1,3}}
		a_{k_{15},s_{1,3}}
		\end{array} $\\
		
		$\begin{array}{lllllll}
		\tau(k_{24}s_{1,4}^2k_{24}^{-1})=\tau(s_{1,2}s_{1,4}s_{1,3}s_{1,2}s_{1,3}s_{1,3}s_{1,2}s_{1,3}s_{1,4}s_{1,2})
		=a_{k_1,s_{1,2}}a_{k_2,s_{1,4}}a_{k_6,s_{1,3}}a_{k_{14},s_{1,2}}a_{k_{24},s_{1,4}}\\
		a_{k_{23},s_{1,4}}a_{k_{24},s_{1,2}}a_{k_{14},s_{1,3}}a_{k_6,s_{1,4}}a_{k_2,s_{1,2}}=
		a_{k_{24},s_{1,4}}
		a_{k_{23},s_{1,4}}
		\end{array} $\\
		
		$\begin{array}{llllllll}
		\tau(k_{24}(s_{1,2}s_{1,3}s_{1,4}s_{1,3})^2 k_{24}^{-1})=\tau(s_{1,2}s_{1,4}s_{1,3}s_{1,2}s_{1,2}s_{1,3}s_{1,4}s_{1,3}s_{1,2}s_{1,3}s_{1,4}s_{1,3}s_{1,2}s_{1,3}s_{1,4}s_{1,2})\\
		=a_{k_1,s_{1,2}}a_{k_2,s_{1,4}}a_{k_6,s_{1,3}}a_{k_{14},s_{1,2}}
		a_{k_{24},s_{1,2}}a_{k_{14},s_{1,3}}a_{k_6,s_{1,4}}a_{k_2,s_{1,3}}a_{k_5,s_{1,2}}a_{k_{11},s_{1,3}}
		a_{k_7,s_{1,4}}a_{k_{15},s_{1,3}}a_{k_{24},s_{1,2}}\\
		a_{k_{14},s_{1,3}}a_{k_6,s_{1,4}}a_{k_2,s_{1,2}}=
		a_{k_{11},s_{1,3}}a_{k_{15},s_{1,3}}
		\end{array} $\\
		
		$\begin{array}{lllllllll}
		\tau(k_{24}(s_{1,2}s_{1,4})^4k_{24}^{-1})=\tau(s_{1,2}s_{1,4}s_{1,3}s_{1,2}s_{1,2}s_{1,4}s_{1,2}s_{1,4}s_{1,2}s_{1,4}s_{1,2}s_{1,4}s_{1,2}s_{1,3}s_{1,4}s_{1,2})\\
		a_{k_1,s_{1,2}}a_{k_2,s_{1,4}}a_{k_6,s_{1,3}}a_{k_{14},s_{1,2}}a_{k_{24},s_{1,2}}
		a_{k_{14},s_{1,4}}a_{k_{18},s_{1,2}}a_{k_{22},s_{1,4}}a_{k_{11},s_{1,2}}a_{k_5,s_{1,4}}a_{k_{12},s_{1,2}}
		a_{k_{23},s_{1,4}}a_{k_{24},s_{1,2}}\\
		a_{k_{14},s_{1,3}}a_{k_6,s_{1,4}}a_{k_2,s_{1,2}}=
		a_{k_{14},s_{1,4}}a_{k_{18},s_{1,2}}a_{k_{23},s_{1,4}}
		\end{array} $\\

		This implies:\\
		$a_{k_7,s_{1,3}}=a_{k_{11},s_{1,3}}^{-1}=a_{k_{15},s_{1,3}}=a_{k_{20},s_{1,3}}=a_{k_{22},s_{1,3}}^{-1}=a_{k_{24},s_{1,3}}^{-1}$;
		$a_{k_{12},s_{1,3}}=a_{k_{13},s_{1,4}}^{-1}=a_{k_{17},s_{1,2}}=a_{k_{17},s_{1,4}}=a_{k_{19},s_{1,2}}^{-1}=a_{k_{19},s_{1,3}}^{-1}$;
		$a_{k_{13},s_{1,3}}=a_{k_{14},s_{1,4}}=a_{k_{18},s_{1,4}}^{-1}=a_{k_{21},s_{1,3}}^{-1}$;
		$a_{k_{15},s_{1,2}}=a_{k_{16},s_{1,4}}=a_{k_{20},s_{1,4}}^{-1}=a_{k_{21},s_{1,2}}^{-1}$;
		$a_{k_{16},s_{1,3}}=a_{k_{23},s_{1,3}}^{-1}$;
		$a_{k_{18},s_{1,2}}=a_{k_{22},s_{1,2}}^{-1}$;
		$a_{k_{23},s_{1,4}}=a_{k_{24},s_{1,4}}^{-1}$.\\
		
		It follows that $PJ_4$ is generated by 
		$a_{k_7,s_{1,3}},
		a_{k_{12},s_{1,3}},
		a_{k_{13},s_{1,3}},
		a_{k_{15},s_{1,2}},
		a_{k_{16},s_{1,3}},
		a_{k_{18},s_{1,2}},
		a_{k_{23},s_{1,4}}$, and a complete set of relations is:
		
		$a_{k_7,s_{1,3}}a_{k_{23},s_{1,4}}^{-1}a_{k_{16},s_{1,3}}^{-1}a_{k_{12},s_{1,3}}a_{k_{13},s_{1,3}}a_{k_{15},s_{1,2}}^{-1}=1$;
		$a_{k_{13},s_{1,3}}a_{k_{18},s_{1,2}}a_{k_{23},s_{1,4}}=1$; and
		
		$a_{k_7,s_{1,3}}a_{k_{18},s_{1,2}}^{-1}a_{k_{12},s_{1,3}}^{-1}a_{k_{16},s_{1,3}}^{-1}a_{k_{15},s_{1,2}}=1$.
		
	Therefore, using $a_{k_{23},s_{1,4}}^{-1}= a_{k_{13},s_{1,3}} a_{k_{18},s_{1,2}} $
and  $a_{k_{16},s_{1,3}}^{-1} = a_{k_{12},s_{1,3}} a_{k_{18},s_{1,2}}  a_{k_7,s_{1,3}}^{-1} a_{k_{15},s_{1,2}}^{-1}$,	we get 

\begin{align*}
PJ_4 &=\langle  \ a_{k_7,s_{1,3}},   \  a_{k_{12},s_{1,3}}, \ a_{k_{13},s_{1,3}}, \ a_{k_{15},s_{1,2}},  a_{k_{18},s_{1,2}} \ | \\
&a_{k_7,s_{1,3}} a_{k_{13},s_{1,3}} a_{k_{18},s_{1,2}} 
a_{k_{12},s_{1,3}}  a_{k_{18},s_{1,2}}a_{k_7,s_{1,3}}^{-1} a_{k_{15},s_{1,2}}^{-1} a_{k_{12},s_{1,3}}a_{k_{13},s_{1,3}}a_{k_{15},s_{1,2}}^{-1}=1
\rangle. \qedhere
\end{align*}
	\end{proof}
		
	\bibliographystyle{alpha}
	\bibliography{cactus}
\end{document}